\newif\ifpix \pixtrue
\numberwithin{equation}{section} 
\newtheorem{thm}{Theorem}[section]  
\newtheorem{cor}[thm]{Corollary}    
\newtheorem{lem}[thm]{Lemma}        
\newtheorem{prop}[thm]{Proposition}  
\theoremstyle{definition} \newtheorem{dfn}[thm]{Definition}
\newtheorem{notn}[thm]{Notation}\newtheorem{rmk}[thm]{Remark}
\newtheorem*{claim*}{Claim} 
\newtheorem{ex}[thm]{Example}
\newcommand{\la}{\langle}
\newcommand{\ra}{\rangle}
\newcommand{\cB}{\mathcal{B}}
\newcommand{\cH}{\mathcal{H}}
\newcommand{\ol}{\overline}
\newcommand{\cR}{\mathcal{R}}
\newcommand{\cD}{\mathcal{D}}
\newcommand{\bom}{\boldsymbol{\omega}}
\newcommand{\ba}{\boldsymbol{a}}
\newcommand{\bbb}{\boldsymbol{b}}
\newcommand{\bc}{\boldsymbol{c}}
\newcommand{\bze}{\boldsymbol{\zeta}}
\newcommand{\bu}{\boldsymbol{u}}
\newcommand{\bg}{{\mathbf g}}
\newcommand{\bh}{{\mathbf h}}
\newcommand{\bG}{{\mathbf G}}
\DeclareMathOperator{\ei}{ei}
\DeclareMathOperator{\eb}{eb}
\DeclareMathOperator{\ff}{ff}
\DeclareMathOperator{\SO}{SO} 
\DeclareMathOperator{\ad}{ad}
\DeclareMathOperator{\AH}{AH}
\DeclareMathOperator{\TN}{TN}
\DeclareMathOperator{\bo}{b}
\newcommand{\rd}{{\mathrm d}}
\def\so{{\mathfrak{s}\mathfrak{o}}}
\newcommand{\p}{\partial}
\DeclareMathOperator{\supp}{supp}
 \DeclareMathOperator{\Diff}{Diff}
\DeclareMathOperator{\SL}{SL} 
\newcommand{\CC}{\mathbb{C}}
\DeclareMathOperator{\tr}{tr} 
\newcommand{\RR}{\mathbb{R}} \newcommand{\ZZ}{\mathbb{Z}}
\newcommand{\TT}{\mathbb{T}}
\newcommand{\cL}{\mathcal{L}}
\newcommand{\cF}{\mathcal{F}}
\newcommand{\cU}{\mathcal{U}}
\newcommand{\cV}{\mathcal{V}}
\newcommand{\cW}{\mathcal{W}}
\newcommand{\wt}{\widetilde}
\newcommand{\wh}{\widehat}
\newcommand{\ve}{\varepsilon}
\newcommand{\vc}[1]{\boldsymbol{#1}}
\renewcommand{\geq}{\geqslant}
\renewcommand{\leq}{\leqslant}
\renewcommand{\epsilon}{\varepsilon}
\newcommand{\hlf}{\frac12}
\def\bs{\mathbf{s}}
\def\tx{\tilde{x}}
\newcommand{\CP}{\mathbb{CP}}
\newcommand{\RP}{\mathbb{RP}}
\def\CPD{\CP_1^{\text{\tiny diag}}}
\def\CPAD{\CP_1^{\text{\tiny adiag}}}
\DeclareMathOperator{\SEN}{Se}
\DeclareMathOperator{\AHr}{HA}
\def\AHu{\widehat{\AH}}
\def\AHr{\text{HA}}
\def\AHd{\mathcal{M}^0_2}
\def\bee{\begin{equation}}
\def\eee{\end{equation}}
\author{B.J. Schroers}
\address{Department of Mathematics,   Heriot-Watt University and  Maxwell Institute for Mathematical Sciences}
\email{b.j.schroers@hw.ac.uk}
\author{M.A. Singer}
\address{Department of Mathematics, University College London}
\email{michael.singer@ucl.ac.uk}
\date{6 April 2020}
\title[$D_k$ Gravitational Instantons]{$D_k$ Gravitational Instantons as superpositions of    Atiyah--Hitchin  and Taub--NUT geometries}
\begin{document}
\begin{abstract}
We obtain $D_k$ ALF gravitational instantons by a gluing construction which captures, in a precise and explicit fashion,  their interpretation as non-linear superpositions of the  moduli space of centred  $SU(2)$ monopoles, equipped with the Atiyah--Hitchin metric,  and $k$ copies of the Taub--NUT manifold. The construction proceeds from a finite set of points in euclidean space, reflection symmetric about the origin, and depends on an adiabatic parameter  which is incorporated into the geometry as a  fifth dimension.  Using a formulation in terms of hyperK\"ahler triples on manifolds with boundaries, we show that the constituent Atiyah--Hitchin and Taub--NUT geometries arise as boundary components of the 5-dimensional geometry as the adiabatic parameter is taken to zero.
\end{abstract}
\maketitle

\begin{center}
{ \small To appear in a special issue of   The Quarterly Journal of Mathematics  \\
dedicated to Sir Michael Atiyah}
\end{center}

\section{Introduction and conclusion}

\subsection{First statement of the main result}
A 4-dimensional gravitational instanton is a complete hyperK\"ahler
4-manifold $(M,g)$, possibly with a decay condition on the curvature
at infinity.  Michael Atiyah was fascinated by gravitational
instantons from the early 1980s onwards, and much progress was made by
the Oxford group, led by Sir Michael, until he left for the mastership
of Trinity College Cambridge in 1990.  In particular, his student
Peter Kronheimer, building on the work of Nigel Hitchin (e.g.\
\cite{poly_grav}) and others, gave a complete classification of the
asymptotically locally euclidean (ALE) gravitational instantons,
using the hyperK\"ahler quotient construction
\cite{Kronheimer86,Kronheimer89}.  At about the same time, Atiyah and
Hitchin computed the metric on the moduli space $\AHd$ of centred
$SU(2)$  monopoles, which  is an example of an asymptotically locally
flat (ALF) gravitational instanton \cite{AtiyahHitchin85,AHbook}. 

The classification of  ALF gravitational instantons has proved to be
more difficult, but, following substantial progress
\cite{CherkisKapustin98,CherkisKapustin99,CherkisHitchin05,Minerbe10,Minerbe11,Auvray18,
  ChenChen19} which we review below,  is now quite well understood.
In particular, there are two infinite families, the $A_k$ and $D_k$
ALF gravitational instantons, labelled by a non-negative integer $k$
and distinguished by the fundamental group of the asymptotic region of
$M$.   The $A_k$ ALF gravitational instantons can all be constructed by
the Gibbons--Hawking Ansatz \cite{GibbonsHawking78,Minerbe11}. In
particular, the $A_0$ gravitational instanton is the euclidean (positive mass)
Taub-NUT space which we denote by $\TN$ in the following.

Constructions
of $D_k$ gravitational instantons are not so explicit.  The early
paper \cite{DaDi} gave a construction using Nahm's equations of hyperK\"ahler metrics on
$4$-manifolds with the correct $D_k$ asymptotic topology, but did not
prove the ALF property.  Other constructions 
either use twistor theory
\cite{CherkisKapustin98,CherkisKapustin99,CherkisHitchin05}  or  rely
on gluing or desingularization constructions
\cite{BiquardMinerbe11, Auvray18}.   In this paper we shall present a
construction in which $D_k$  ALF gravitational instantons appear
as (nonlinear) superpositions of $\AHd$ and $k$ copies of
$\TN$.

The idea of this construction can be found in a paper
  of Ashoke Sen \cite{Sen97}:  the Gibbons--Hawking Ansatz, applied to
  a harmonic function of the form
  \bee\label{e31.21.11.20}
  V = 1 - \frac{2}{|x|} + \sum_{\nu=1}^k\left(\frac{1}{2|x- q_\nu|}+
    \frac{1}{2|x+ q_\nu|}\right),
  \eee
  where the $q_\nu$ are points of $\RR^3$ widely separated from each
  other and from $0$, yields a hyperK\"ahler manifold $(M_q,g_q)$, with ALF
  asymptotics, but which is incomplete near $0$ due to the negative
  coefficient of $1/|x|$. The function $V$ is symmetric under
  $x\mapsto -x$ and this is covered by an orientation-preserving
  isometry $\iota$ of $M_q$.  The geometry of $M_q/\iota$ for $|x| \sim R$
  is well approximated by the {\em asymptotic geometry} of $\AHd$ (see
  below),  provided that $R$ is large but is much less than the smallest of the
  $|q_{\nu}|$.   Denote by $\SEN_k$ the $4$-manifold obtained by
  gluing $\AHd$ into the `hole' near $x=0$ in $M_q/\iota$.  (The
  construction will be described carefully later,  cf.\
  \S\ref{improved} and \S\ref{s1.28.11.20}).    This is a smooth
  $4$-manifold with a metric $g^\chi$ obtained by gluing $g_q$ to
  $g_{\AH}$.  In particular $g^\chi$ is approximately hyperK\"ahler
  on $\SEN_k$.   A first statement of the Theorem to be proved is as follows:  
  \begin{thm}
Let $p_{1},\ldots,p_k \in \RR^3$ be such that $\{0,\pm p_1,\ldots,\pm
p_k\}$ is a set of $2k+1$ distinct points.   Define $q_{\nu} =
p_{\nu}/\ve$. Then there exists $\ve_0>0$, so that for $\ve \in
(0,\ve_0)$, 
      there is a small perturbation $g_{\SEN,\ve}$ of $g^{\chi}$ such that
      $(\SEN_k,g_{\SEN,\ve})$ is a $D_k$ ALF gravitational instanton.
\label{version0}    
\end{thm}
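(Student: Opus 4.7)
The plan is to carry out a standard gluing argument: produce the desired hyperK\"ahler metric as a small perturbation of $g^{\chi}$ via an implicit function theorem. It is convenient to work not with the metric itself but with its associated hyperK\"ahler triple $\bom=(\omega_1,\omega_2,\omega_3)$ of symplectic forms, using Donaldson's characterisation: a triple of closed $2$-forms on an oriented $4$-manifold whose Gram matrix $Q_{ij}$ defined by $\omega_i\wedge\omega_j=Q_{ij}\mu$ is a positive-definite scalar multiple of the identity determines a hyperK\"ahler metric. Accordingly, first construct an approximate triple $\bom^{\chi}$ on $\SEN_k$ by using the cutoff $\chi$ to interpolate, in an annular neck of radius $R$ satisfying $1\ll R\ll \min_{\nu}|q_{\nu}|$, between the Gibbons--Hawking triple associated to $V$ on $(M_q/\iota)\cap\{|x|\gtrsim R\}$ and the Atiyah--Hitchin triple on $\AHd$.

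The next step is to control the defect of $\bom^{\chi}$ --- the failure of $d\bom^{\chi}$ to vanish together with the deviation of $Q^{\chi}_{ij}$ from a scalar multiple of $\delta_{ij}$ --- in a weighted H\"older norm adapted both to the ALF end of $\SEN_k$ and to the shrinking neck. In the neck, both the asymptotic Atiyah--Hitchin triple and the restriction of the Gibbons--Hawking triple agree with the reference triple built from $V_0=1-2/|x|$ up to (i) exponentially small corrections $O(e^{-cR})$ coming from the Atiyah--Hitchin asymptotics, and (ii) corrections of order $O(\ve)$ coming from the $\pm q_{\nu}$ poles in $V$; balancing these by choosing $R=R(\ve)=O(\log(1/\ve))$ will make the defect polynomially small in $\ve$ in the chosen norm.

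Writing $\bom=\bom^{\chi}+\bet$ and imposing a gauge condition of the form $d^{*}\bet=0$ to eliminate the diffeomorphism ambiguity reduces the hyperK\"ahler equations to a nonlinear elliptic system $L\bet+N(\bet)=-\text{defect}$, where $L$ is a Dirac-type first-order operator and $N$ is quadratic in $\bet$. The crux of the argument, and what I expect to be the main obstacle, is to invert $L$ uniformly in $\ve$ on the relevant weighted spaces: as $\ve\to 0$ the geometry degenerates into the disjoint union of $\AHd$ and $k$ copies of $\TN$, so $L$ acquires approximate nullspaces localised on each limit piece (and tied to the ALF end), and these must be shown either to be lifted by the gluing or to be absorbed into the expected hyperK\"ahler moduli. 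This is precisely the difficulty the paper's $5$-dimensional framework is designed to address: treating $\ve$ as a boundary-defining coordinate of a compactified $5$-manifold whose boundary strata are $\AHd$ and the Taub--NUT factors converts the degeneration into a fibred-boundary structure on a fixed manifold, amenable to parametrix constructions in the spirit of the $b$- or edge-calculus. Once $L$ has a right inverse bounded uniformly in $\ve$, a standard contraction mapping argument produces a unique small $\bet_{\ve}$, hence the exact triple $\bom^{\chi}+\bet_{\ve}$ and the metric $g_{\SEN,\ve}$; completeness is inherited from $g^{\chi}$ because the perturbation is $C^0$-small, the $D_k$ ALF asymptotic form persists since $\bet_{\ve}$ decays at infinity at the rate encoded in the weighted norm, and the asymptotic fundamental group is $D_k$ by the construction of $\SEN_k$.
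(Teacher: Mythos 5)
Your proposal captures the right overall strategy (hyperK\"ahler triples in Donaldson's sense, patching with a cutoff, implicit function theorem, the $5$-manifold compactification in the role of the paper's $\cW$), but it misses two substantive points, one of which would leave a real gap. First, the paper does not attempt a direct contraction from $\bom^{\chi}$: Section~\ref{formalsol_sec} first improves $\bom^{\chi}$ to a \emph{formal} solution $\bze$, iteratively correcting in powers of $\ve$ by alternately solving Poisson problems on each $X_\nu$ and on the base $Y_{\ad}$, so that $Q(\bze)\in\dot{C}^\infty(\cW)$ vanishes to all orders at every boundary hypersurface of $\cW$ before any fixed-point argument is run. This step is what makes the final perturbation lie in $\ve^{\infty}\sigma_I^2\Omega^1_{\phi,\ei}(\cW)\otimes\RR^3$ and hence is essential for the refined Theorem~\ref{mainthm}; for Theorem~\ref{version0} alone you might hope to skip it, but in the paper's scheme the uniformity estimates for the linear solve are built on having a defect that is rapidly decreasing at all faces, not merely $O(\ve^3)$.

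Second, and more seriously, your treatment of the linear theory is off in a way that would stall the argument. You impose $\rd^{*}\bet=0$ and regard $L$ as a Dirac-type operator, then raise the spectre of approximate nullspaces concentrating on the limit pieces and propose to ``absorb them into hyperK\"ahler moduli'': this is a hand-wave, and it is not what happens. The paper's crucial simplification (\eqref{e12.2.4.20}--\eqref{e6.26.3.20}) is to look for $a=D^{*}\bu$ with $\bu\in(\Omega^0\oplus\Omega^2_+)\otimes\RR^3$, and then observe that because the hyperK\"ahler triple trivializes $\Lambda^0\oplus\Lambda^2_+$ flatly, the operator $DD^{*}$ to be inverted is exactly $12$ copies of the \emph{scalar} Laplacian $\Delta_g$ (Weitzenb\"ock with $W_+=s=0$). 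The scalar Laplacian on a strongly ALF space is genuinely invertible between the weighted spaces of Appendix C (Theorem~\ref{t11.26.3.20}), with no kernel or cokernel; the uniform-in-$\ve$ invertibility on $\cW$ (Theorem~\ref{t1.1.4.20}) is then obtained by patching these inverses with an adiabatic inverse localized near $X_{\ad}$, a point that again relies on the scalar reduction rather than on a generic fully-elliptic $\phi$-calculus parametrix (the Laplacian of a $\phi$-metric is not fully elliptic, see the remark closing Appendix C). Without the ansatz $a=D^{*}\bu$ and the consequent reduction to $\Delta_g$, you would indeed face the problem you flag, and the proposal as written does not solve it.
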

Since the geometry of $g_q$ is approximately that of the Taub--NUT
metric for $|x \pm q_\nu| \sim R$, for $R$ large but much less than
$1/\ve$, this result already justifies the title of the paper:
$(\SEN_k,g_{\SEN,\ve})$ is a $D_k$ ALF gravitational instanton appearing
as a (nonlinear) superposition of the Atiyah--Hitchin and Taub--NUT
geometries.   We shall, however, prove a much more precise version of
this result (Theorem~\ref{mainthm} below), which shows that
$g_{\SEN,\ve}$ is, in a suitable sense, {\em smooth in $\ve$}
uniformly down to $\ve=0$.

In the remainder of this extended introduction we explain  the relation of this result to  Michael Atiyah's  interest in geometrical  models of matter,  provide  some  technical background and use it to state a   more detailed  version of the theorem.

\subsection{Motivation}
 \label{motisec}
Our interest in  Theorem~\ref{version0}  has its origin in a  speculative proposal  for  purely geometric models of  physical  particles made  in \cite{AMS}  by Michael Atiyah, Nick Manton and the first  named   author of the current paper. While our main concern here is  geometry,   we briefly recall the  physical motivation.

 The idea developed in \cite{AMS} is to use non-compact hyperK\"ahler
 4-manifolds  to model electrically charged particles like the
 electron or the proton.  Outside a  compact core region, or  at least
 asymptotically, the 4-manifolds are required to be circle fibrations
 over physical  3-dimensional space. In this asymptotic region, the
 model is interpreted as a dual Kaluza-Klein picture:  the Chern class
 of the asymptotic circle bundle, which would be the magnetic charge
 in Kaluza-Klein theory,  is taken to represent the negative of the
 electric charge.  The further requirement that the 4-manifold has
 cubic volume growth means that the  allowed geometric models are in
 effect ALF gravitational instantons \cite{ChenChen19}. 
 
 In \cite{AMS}, these ideas were illustrated with two main examples, namely   the Taub--NUT   and   Atiyah--Hitchin manifolds  as potential models of, respectively,  the electron and the proton.  Following the convention of \cite{AMS} we  write  $\AH$ for the Atiyah--Hitchin manifold by which we mean  the  simply-connected double cover of the moduli space $\AHd$ of centred 2-monopoles in critically coupled  $SU(2)$ Yang-Mills-Higgs theory \cite{AHbook}.

Geometries which are obtained by gluing together copies of $\TN$ and
of $\AH$ or $\AHd$  are potential geometric models for  electrons
interacting with each other and a proton, and therefore interesting
arenas for exploring  if and how  geometrical models can make contact
with physics beyond basic quantum numbers like electric charge and
baryon number. In  particular, the model for a single electron
interacting with the proton would need to  account for the formation
of the hydrogen atom and its excited states. 
   
   The gluing process is well-understood when dealing only with copies of $\TN$, where it leads to the multi-center  Taub--NUT spaces  which make up  the $A_k$  series of ALF gravitational instantons, with the positive integer $k+1$ counting the number of centres or `NUTs'\cite{GibbonsHawking78,Minerbe11}.
 
 However, the interpretation of  $D_k$  ALF gravitational instantons, even in some asymptotic region, as a composite of more elementary geometries is less clear. This is the issue addressed by Theorem~\ref{version0}, building on the procedure first outlined by Sen.
 While Sen's proposal was made in the context of  $M$-theory, it is similar in spirit to the motivation coming from geometric models of matter.  In both cases one aims to obtain  a  $D_k$ space   as a non-linear superposition  of  $\AHd$  and $k$ copies of $\TN$, thus interpreting it  as a  composite object or bound state. 
 
The construction has two main ingredients, the Gibbons--Hawking
gravitational instanton $(M_q,g_q)$ (see \eqref{e31.21.11.20})
with the additional symmetry $\iota$, and 
and a further manifold,  obtained as a $\ZZ_2$-quotient of a
 branched cover $\AHu$  of  the  Atiyah--Hitchin space $\AH$,  which
 we call $\AHr$.  We now discuss these  in turn, but
 should alert the reader that, while $\AH$ and $\AHd$ have smooth
 hyperK\"ahler metrics, the lifts of these metrics to the branched
 covers $\AHu$ and $\AHr$ are singular on the branching locus.

\subsection{The adiabatic Gibbons--Hawking Ansatz}
\label{AGH}
  The definition of ALF gravitational instantons 
allows for the complement of all sufficiently large compact subsets  to have a non-trivial fundamental group
$\Gamma$.  Apart from a few
exceptional cases,  $\Gamma$ must be a finite subgroup of
$SU(2)$, more specifically a cyclic group $\ZZ_\ell$ or the binary
dihedral group ${\mathcal D}_{\ell}$ of order $4\ell$, for a suitable
positive integer $\ell$.  The corresponding ALF gravitational instantons are called $A_{\ell-1}$  and $D_{\ell+2}$ ALF gravitational instantons. In fact, it is natural to extend this correspondence to $D_k$ instantons for non-negative integers $k$ as follows.

To fix notation, our  presentation of ${\mathcal D}_\ell$ as a subgroup of $SU(2)$ is as the group 
generated by
\begin{equation}\label{e1.30.9.19}
  R_\ell=\begin{pmatrix} e^{-i \frac{ \pi}{ \ell}} & 0 \\ 0 & e^{i\frac{\pi}{ 
      \ell}} \end{pmatrix},\;\;
S=  \begin{pmatrix} 0 & -1 \\ 1 & \phantom{-}0 
  \end{pmatrix},\quad  \ell \geq 1, 
\end{equation}
so that ${\mathcal D}_1\simeq \ZZ_4$ and ${\mathcal D}_2$ is the lift
of the Vierergruppe, viewed as the group of   rotations  by  $\pi$
around orthogonal axes in $\RR^3$,  to $SU(2)$. For our purposes it
is convenient to define also $\mathcal D_0$ as the  infinite
group with generators $R_0,S$ and relations $SR_0S^{-1}=R_0^{-1}$ and
$S^4=\text{id}$. With these conventions, the   fundamental  group of
the asymptotic region  of  $D_k$ ALF gravitational instantons  is
${\mathcal D}_{k^*}$, where 
\begin{equation}
\label{kstar}
k^*=|k-2|.
\end{equation}

We now give a more detailed account of the hyperK\"ahler manifolds
$(M_q,g_q)$ which appear Theorem~\ref{version0}.  It is convenient to
fix $p_{\nu}$ as in the statement of that theorem, set $q_{\nu} =
p_{\nu}/\ve$ and also replace $x$ by $x/\ve$, so that $V$ becomes
\bee \label{e1.9.1.20}
h_{\ve}(x) = 1  -\frac{2\ve}{|x|} + \sum_{\nu=1}^k 
\left(\frac{\ve}{2|x-p_{\nu}|} + \frac{\ve}{2|x+p_{\nu}|} \right) .
\eee
This function yields an {\em adiabatic family of Gibbons--Hawking
  metrics}
\begin{equation}\label{e1.22.8.18}
  g_{\ve} = h_\ve\frac{|\rd x|^2}{\ve^2}  + h_{\ve}^{-1}\alpha^2,\;\; 
  \rd \alpha = *_{\ve}\rd h_{\ve}, 
\end{equation}
on $4$-manifolds $M_{\ve}$ which carry 
a circle-action with quotient $\{|x| > 2\ve\} \subset \RR^3$.
Denoting the quotient map by $\phi$ and putting
\bee\label{e1.26.11.20}
P = \{\pm p_1,\ldots, \pm p_k\},
\eee
the action is free away from $\phi^{-1}(P)$, the fixed-point set of
the $S^1$-action on $M_{\ve}$.    Then $M_{\ve}\setminus \phi^{-1}(P)$
is the total space of a principal $S^1$-bundle over $Y_{\ve} =
\{|x|>2\ve\} \setminus P$, and 
the precise
interpretation of $\alpha$ in \eqref{e1.22.8.18} is as a
connection-form on this bundle.  Thus in local
coordinates $(x,\theta)$ on $\phi^{-1}(Y_{\ve})$, $\alpha =
\rd\theta +\mbox{($1$-form on base)}$, and $\rd \alpha$,
the curvature of $\alpha$, is a $2$-form on $Y_{\ve}$.  The
second  equation of
\eqref{e1.22.8.18} thus makes sense with 
$*_\ve$ denoting the Hodge star operator of the metric $|\rd
x|^2/\ve^2$ (and a fixed orientation) on $\RR^3$.

The involution $x\mapsto -x$ preserves $h_{\ve}$ and 
is covered on $M_{\ve}$ by an orientation-preserving involution
$\iota$.  
Then $\alpha$ can (and will) be chosen so that $\iota^*\alpha =
-\alpha$;  $\iota$ is then an isometry of $g_{\ve}$.  Since $\iota$
acts freely on $M_{\ve}$, the quotient
$(M_{\ve}/\iota, g_{\ve})$ is a smooth hyperK\"ahler manifold with
$D_k$ ALF asymptotics, but it is
incomplete near $0$ due to the restriction to $|x|>2\ve$.   To verify
the claim about the asymptotics, note that 
for $|x|\gg 1$, 
\bee\label{e1.22.10.20}
h_\ve(x) = 1 + \frac{\ve(2k-4)}{2|x|} + O(\ve|x|^{-3}) =
1  + \frac{2k^*\ve}{2|x|} + O(\ve|x|^{-3}).
\eee
(There is no $O(\ve|x|^{-2})$ term because $h_{\ve}(-x)=h_{\ve}(x)$.) 
This means that the asymptotic topology  of $M$ is $\RR^3 \times S^1$
if $k^*=0$,  and $\RR^4/\la R_{k^*}\ra$ otherwise ($R$ is defined in
\eqref{e1.30.9.19}). Our involution $\iota$ corresponds to the
generator $S$ above acting on 
$\RR^4/\la R_{k^*}\ra$ so that $(M_{\ve}/\iota, g_{\ve})$ does indeed
have the correct asymptotic geometry for a  $D_k$ ALF gravitational
instanton. 

As we have indicated, the choice of leading term $1- \frac{2\ve}{|x|}$ is made
so that the hole can be filled by gluing in $\AHd$ with the
Atiyah--Hitchin metric.
We therefore turn next
to various aspects of its geometry.  
  
\subsection{The Atiyah--Hitchin manifold and related spaces}
\label{AHreview}

Michael Atiyah revisited the geometry of   the Atiyah-Hitchin manifold on several occasions. Even though it arose in the specific physical context of magnetic  monopoles, he hoped  for an  application to real and fundamental physics,    and pursued this in   the Skyrme model of nuclear particles \cite{AtiyahManton} and  in geometric models of matter \cite{AMS}. In all these studies, he stressed and used the interpretation of  $\AH$ and its branched cover   as parameter spaces of oriented ellipses, up to scale, in euclidean space.  

We have also found this picture  helpful, and develop it further in this section and  Appendix \ref{cpapp} in order to  clarify the discrete symmetries and their action on the core and asymptotic regions.
We begin by noting that, as real manifolds, 
\bee
\label{coverdef}
\AHu= TS^2\simeq  \CP_1\times \CP_1\setminus  \CPAD,
\eee
where $\CPAD$ is the anti-diagonal  $\CP_1$ in $ \CP_1\times \CP_1$. This manifold 
  is a  branched  cover  of the Atiyah--Hitchin manifold $\AH$, which, as already explained,  is the double cover of the moduli space $\AHd$ of centred 2-monopoles.  We would like to make this explicit, and  to define the manifold $\AHr$ in terms of $\AHu$.

In Appendix \ref{cpapp}, we derive the  concrete realisation of $\AHu$  as 
\bee
\label{AHuexplicit}
\AHu = \{Y\in \CC^3| Y_1^2 + Y_2^2 +Y_3^2=1\},
\eee
where we  wrote $Y $ for the vector in $\CC^3$ with coordinates $Y_1,Y_2$ and $Y_3$.   The real and imaginary parts of 
$Y= y+i\eta$ are orthogonal, with magnitudes related via $|y|^2= 1
+|\eta|^2$. We can picture this description in terms of an oriented
ellipse, called the $Y$-ellipse in the following,   with major axis
$y$ and minor axis $\eta$.  When $|\eta|=0$ the $Y$-ellipse
degenerates to an oriented line. The set   of  these lines is a
two-sphere  to which $\AHu$ retracts and which we call the {\em core} in the
following.  It is the diagonal submanifold of $\CP_1\times \CP_1$,
and we denote it by $\CPD$. 

This description of $\AHu$ is useful  for understanding its symmetries and the structure near the core, but less useful when studying the asymptotic region away from the core, which for us means simply  $|\eta| \neq0$. In this region  it is convenient to switch to a dual description, derived in the appendix, in terms of a  complex vector  $X$ whose components also satisfy $X_1^2 + X_2^2 +X_3^2=1$,  but whose real and imaginary parts are 
\bee
X= \tx + i\eta, \qquad \tx = \frac{y\times  \eta}{|\eta|^2}, \quad \xi =   - \frac{ \eta}{ |\eta|^2}.
\eee
One checks that  $|\tx|^2=1+|\xi|^2$, and in Appendix \ref{cpapp} we explain that $\tx$ and $\xi$ are the major and minor axes of a  family of ellipses which we call $X$-ellipses and  which are dual to the $Y$-ellipses. 

The $X$-ellipses 
degenerate into oriented  lines in the direction of $\tx$  when $|\xi|=0$.  The directions of these lines make up  the sphere at spatial infinity in the asymptotic region of $\AHu$, which is $\CPAD$ in the description \eqref{coverdef}. 
The core  $\CPD$ of $\AHu$   is obtained in another degenerate limit of the $X$-ellipses, namely in  the limit $|\xi|\rightarrow \infty$, where they become circles of infinite radius.  

We are interested in the quotients of $\AHu$ by discrete symmetries which arise naturally from its description as  $\CP_1\times \CP_1\setminus  \CPAD$, namely the factor switching map $s$,  the antipodal maps on both factors $a$ and the composition $r=as$.  It follows from the description of these maps in Appendix \ref{cpapp}, that they act in the following way on the ellipse parameters, where the formulation in terms of $(\tx,\xi)$ assumes that $|\eta|\neq 0$:
\begin{align}
\label{sra}
s&: (y,\eta)\mapsto (y,-\eta), \qquad (\tx,\xi)\mapsto (-\tx, -\xi), \nonumber \\
r&: (y,\eta)\mapsto (-y,-\eta), \qquad (\tx,\xi)\mapsto (\tx, -\xi), \nonumber \\
a&: (y,\eta)\mapsto (-y,\eta), \qquad (\tx,\xi)\mapsto (-\tx, \xi).
\end{align}
In particular we see that $s$ fixes the core $\CPD$ but acts as the antipodal map on the sphere at spatial infinity $\CPAD$, while $r$  fixes the sphere at spatial infinity and acts as the antipodal map on the core. Writing $1$ for the identity map   and defining the Vierergruppe
\bee
\text{Vier} =\{1,s,r,a \},
\eee  
 we can characterise the Atiyah--Hitchin manifold  $\AH$ and  the moduli space $\AHd$  of centred 2-monopoles as  the quotients
 \bee
 \label{AHasquot}
 \AH =\AHu / s , \quad \AHd = \AH/r = \AHu/\text{Vier}.
\eee
It follows from  our discussion of the generators, that in  $\AH$ the core is still a two-sphere, but  the space of directions at spatial infinity   is now  $\CPAD/\ZZ_2 \simeq \RP_2$.  Finally, in $\AHd$  both the core and the  space of directions at spatial infinity   are isomorphic to $\RP_2$.

The manifold obtained by quotienting $\AHu$ by the free action of $r$ is, literally,  central to the construction of  the Sen spaces.  We therefore define
\bee
\AHr = \AHu/ r. 
\eee
This manifold still has a  two-sphere of directions at  spatial infinity, but its core is  isomorphic to  $\RP^2$. 
It allows us to write the moduli space $\AHd$  of centred 2-monopoles  also as the quotient 
\bee
\label{d0}
 \AHd = \AHr/s,
\eee
and this is precisely what we require for our construction. 
The situation is summed up in Fig.~\ref{AHdiag}.

\begin{figure}[h]
\begin{center}
\begin{tikzcd} 
   &\arrow[dl, "\sim_s"]  \AHu  \arrow [dr, "\sim_r"']&   \\
  \AH \arrow [dr,"\sim_r"'] &&\arrow[dl,"\sim_s"]  \AHr   \\
& \AHd  & 
\end{tikzcd}
\end{center}
\caption{Coverings and quotients of the Atiyah--Hitchin manifold}
\label{AHdiag}
\end{figure}

Having defined the manifolds, we turn to their symmetries and metric structure. 
The rotation group $SO(3)$ acts on all four manifolds in
Fig.~\ref{AHdiag} by the obvious action of  $G\in SO(3)$ on $X\in
\CC^3$. This action   commutes with the action of the Vierergruppe
\eqref{sra}, so that the generic $SO(3)$ orbit is $SO(3)$ for $\AHu$,
$SO(3)/\ZZ_2$ for both $\AH$ and $\AHr$ and   $SO(3)/\text{Vier}$ for $\AHd$, with the generators $s,r $ and $a$  realised as  rotations by $\pi$ around three orthogonal axes.
Away from the core $\CPD$, we have a $U(1)$ action which commutes with the $SO(3)$ action and which fixes the asymptotic  direction $m= \tx /|\tx|$: 
\bee
\label{Uact}
 (e^{i\theta}, (\tx,\xi)) \mapsto 
(\tx, R_m(\theta)\xi),
\eee
where $R_m(\theta)$ is the rotation about $m$  by an angle $\theta\in [0,2\pi)$.

The Atiyah--Hitchin metric is most easily expressed in terms of the $SO(3)$ matrix $G$ and one transversal coordinate $\tau$ (bijectively related to ellipse parameter $|\tx|$).  We define
left-invariant 1-forms on $SO(3)$  via $G^{-1} \rd G = \sigma_1 t_1 +
\sigma_2 t_2 + \sigma_3 t_3$ for generators $t_1,t_2,t_3\in \so(3)$ of
the rotations around  three orthogonal  axes,  satisfying
$[t_i,t_j]=\epsilon_{ijk} t_k$. Then   the   Atiyah--Hitchin metric is  
\bee
\label{AHmetric}
g_{\AH} = f^2 \rd \tau ^2 + a^2 \sigma_1^2 +b^2 \sigma_2^2 + c^2\sigma_3^2,
\eee
where  the choice of $f$ amounts to  fixing the transversal coordinate $\tau$, and the radial functions $a,b $ and $c$ obey coupled differential equations which follow from the hyperK\"ahler property of the metric \cite{AHbook}.   As explained  in \cite{GM},  the choice $f=-b/\tau$ results  in a radial coordinate in the range $\tau\in [\pi,\infty)$,  with $\tau=\pi$ corresponding to the core $\CPD$,  and coefficient functions  $a,b$ and $c$ with the asymptotic  form
\bee
\label{AHasy}
a\sim b\sim \tau  \sqrt{1-\frac 2 \tau }, \qquad c\sim -\frac{2}{\sqrt{1-\frac 2  \tau}},
 \eee
and exponentially small corrections. Substituting the  asymptotic form into \eqref{AHmetric} yields  the  negative-mass Taub-NUT  metric as  the leading term
 \bee
 \label{AHmetricasy}
 g_{\AH} = \left(1-\frac{2}{|x'|}\right) |\rd x'|^2 + \left( 1-\frac{2}{|x'|} \right)^{-1} (\alpha')^2 + O(e^{-|x'|}),
 \eee
where we made  the identifications
\bee
 \frac{x'}{|x'|} = G\begin{pmatrix}  0 \\0 \\ 1\end{pmatrix}, \quad  |x'|=\tau, \quad  \alpha' = 2 \sigma_3.
\eee
In the following we will refer to \eqref{AHmetric} as the Atiyah--Hitchin metric and to \eqref{AHmetricasy} as its asymptotic form regardless of whether the underlying manifold  is $\AHu, \AH, \AHr$ or $\AHd$, even though the metric is singular at  the core on  $\AHu$ and $\AHr$.

We can tie together the asymptotic Taub--NUT geometry with the description of  $\AHu$ (and its quotients)   in terms of the $X$-ellipses by noting that both are $U(1)$ bundles over $\RR^3$, with $x$ and the major axis  $\tx$ being coordinates on the base. The directions of both  $x$ and $\tx$  parametrise the two-sphere  at spatial infinity and can be identified.  The  magnitudes of $\tx$ and $x$ are  bijectively related, but not in any obvious way. 
 
To end this review of the Atiyah--Hitchin geometry we note  
that  the  moduli space  $\AHd$ equipped with the  hyperK\"ahler
metric \eqref{AHmetric} is the, up to scaling, unique $D_0$ ALF
gravitational instanton \cite{AHbook}, and, suitably interpreted,
fits into  the general  construction outlined in \S\ref{AGH} with
$k=0$.  In this case, no gluing is required since  
the  manifold $\AHr$ has the required asymptotic structure, both
topologically and metrically. The quotient \eqref{d0} realises the
division by an involution $\iota =s$ which is covered by the generator
$S$ in \eqref{e1.30.9.19}, see our Appendix \ref{cpapp} for details.  

\subsection{Improved statement of Theorem~\ref{version0}}
\label{improved}  Let the notation be as in \S\ref{AGH}.  In particular,
with $h_{\ve}$ defined as in \eqref{e1.9.1.20}, we have
\begin{equation}\label{e2.9.1.20}
{h}_\ve(x) = 1 + \mu\ve  - \frac{2\ve}{|x|} + O(\ve|x|^2)
\end{equation}
for $x$ in some small ball $B(0,\delta) \subset \RR^3$ and
\bee\label{e12.13.9.20}
\mu =  \sum_{p\in P} \frac{1}{2|p|} =\sum_{\nu=1}^k\frac{1}{|p_{\nu}|}.
\eee
The error term in \eqref{e2.9.1.20} is smooth in $B(0,\delta)$ and we
have $O(\ve|x|^2)$ rather than $O(\ve|x|)$ because ${h}_{\ve}(-x)
= {h}_{\ve}(x)$.  

Fix $\delta\in (0,1/2)$ and take $\ve \in (0,\delta^2)$. If we define
\bee
U_1 = \left\{\frac{\ve}{\delta} < |x| < \delta\right\},\;\; V_1=
  \phi^{-1}(U_1),
\eee
then $V_1$ is a non-empty open subset of $M_{\ve}$. 

We have seen that an asymptotic region of $\AHr$ is the total space of
a circle-bundle, $\psi$, say, over a subset of the form
$\{\delta|x'|>1\}$.   Define
\bee
U_0 = \left\{\frac{1}{\delta} < |x'| <
  \frac{\delta}{\ve}\right\},\;\; V_0 = \psi^{-1}(U_0).
\eee

\begin{dfn}\label{n1.21.11.20}
For $\mu>0$ denote by $g_{\AH,\ve}$ the AH metric which has
the asymptotic form 
\bee\label{e2.10.10.20}
(1+\mu\ve -2/|x'|)|\rd x'|^2 +
(1+\mu\ve -2/|x'|)^{-1}(\alpha')^2
\eee
on $\psi^{-1}\{\delta|x'| >1\}$, cf.\ \eqref{AHmetricasy}.
\end{dfn}

The rescaling $x' = x/\ve$ defines a diffeomorphism  $s:U_1 \to U_0$,
 and matches
up the harmonic function of $x'$ appearing
in \eqref{e2.10.10.20} with the leading terms of $h_{\ve}$ in
\eqref{e2.9.1.20}.    The two circle bundles $\phi$ and $\psi$ have 
the same degree and so the diffeomorphism $s$ can be
covered by a bundle map $\kappa: V_1 \to V_0$, say.  Such $\kappa$ is
far from unique, but it may be 
fixed up to a constant phase by insisting that it matches up the
leading terms of $g_{\ve}$ and $g_{\AH,\ve}$.

From \eqref{e2.9.1.20} it follows that 
\bee\label{e5.10.10.20}
\alpha = \alpha_0  + O(\ve|x|^2),
\eee
where $\alpha_0$ is the standard connection on $V_1$
which satisfies
\bee \label{e6.10.10.20}
\rd\alpha_0 = -*_\ve \rd\left(\frac{2\ve}{|x|}\right).
\eee
(In \eqref{e5.10.10.20}, the error term is a $1$-form on $B(0,\delta)$
whose length, as measured by the metric $|\rd x|^2/\ve^2$, is
$O(\ve|x|^2)$; see Proposition~\ref{primitive2}.)

On the other hand,
\bee\label{e7.10.10.20}
\rd\alpha' = -*\rd \left(\frac{2}{|x'|}\right)
\eee
and the right-hand sides of \eqref{e6.10.10.20} and
\eqref{e7.10.10.20} are matched by our rescaling $s$. It follows that
$\kappa$ can be chosen so that 
$\kappa^*(\alpha_0) = \alpha'$, and this 
condition fixes $\kappa$ up to a constant phase.

\begin{dfn}  The space $\wt{\SEN}_{k}$ (also denoted by
  $\wt{\SEN}_{k,\ve}$ when we need to keep track of the scale $\ve$)
  is obtained from the disjoint union
$$
(\AHr_{\ve} \setminus \psi^{-1}\{|x'| \geq \delta\ve^{-1}\}) \amalg
(M_{\ve} \setminus \phi^{-1}\{|x| \leq \delta^{-1}\ve\})
$$
by identifying $V_0$ with $V_1$ by $\kappa$.

The space $\SEN_k$ (or $\SEN_{k,\ve}$) is the quotient $\wt{\SEN}_{k}/\iota$.
\end{dfn}

\begin{rmk}
This discussion clarifies the gluing referred to in the introduction
to Theorem~\ref{version0}. It would be straightforward to 
perturb $g_{\ve}$ and $\kappa^*g_{\AH,\ve}$  over $V_1$ to yield a
metric $g^{\chi}$ which is smooth on $\SEN_{k,\ve}$, has $D_k$ ALF
asymptotics, and is hyperK\"ahler outside of $V_1$.  We shall not do
so here, since such a construction is subsumed in our work in
\S\ref{formalsol_sec}, see in particular Prop.~\ref{p1.17.1.20}.
\end{rmk}

To give the improved version of Theorem~\ref{version0}, we shall make
the construction of $\SEN_{k,\ve}$ uniform in $\ve$ for $\ve \to 0$.
We give a summary here, referring forward to \S\ref{s_gluing_space} for
the details.

We start by introducing compactifications $\ol{\AHr}$, $\ol\AHd$,
$\ol\TN$ by adjoining a boundary `at spatial infinity'.   Each of
these compactifications is a smooth $4$-manifold with fibred boundary,
cf.\ \S\ref{s1.26.11.20} below. Thus
$\p\ol{\AHr}$ is
the total space of the $S^1$-bundle over $S^2$ of degree $4$, $\p\ol{\AHd}$
is the quotient of this by $\iota$, and $\p\ol{\TN} = S^3$, viewed as
the total space of the Hopf fibration over $S^2$.   Moreover the ALF
metrics on these spaces extend smoothly to $\phi$-metrics on their
compactifications (see  Proposition~\ref{prop_alf_extension}). 
Similarly, for fixed $\ve>0$,   $(M_\ve,g_{\ve})$ has a
(partial) compactification $\ol{M}_\ve$ by 
adjoining $S^3/\la R_{2k^*} \ra$ (viewed as the total space of the
$S^1$-bundle of degree $2k^*$ over $S^2$, and interpreted as $S^2\times S^1$ if
$k^*=0$) as a boundary at spatial infinity. The involution $\iota$ extends smoothly to the
boundary and its quotient is of course $S^3/\cD_{k^*}$.  

Since
$\wt{\SEN}_k$ is obtained from $M_{\ve}$
by gluing in $\AHr$, near $0$, this compactification of $M_{\ve}$ also
gives a compactification of $\wt{\SEN}_k$ and the quotient by $\iota$
gives a compactification, $\ol{\SEN}_k$,  of $\SEN_k$ as a manifold
with fibred boundary.

These compactifications feature in the construction, in
\S\ref{s_gluing_space}, of spaces $\wt{\cW}$ and $\cW$ whose
properties we summarize in the following result:

\begin{prop}\label{list1}
  There exists a $5$-manifold $\wt{\cW}$ with corners up to
  codimension $2$ having the following properties: 
\begin{enumerate}
\item[(i)] $\wt{\cW}$ is equipped with a smooth proper map (technically, a
  $\bo$-fibration, cf.\ \cite{CCN}) $\wt{\pi} : \wt{\cW} \longrightarrow I =
  [0,\ve_0)$, $\ve_0$ being a small positive constant; and for $\ve>0$,
  $\pi^{-1}(\ve)$ is the above compactification of
  $\wt{\SEN}_{k,\ve}$, its boundary being $\pi^{-1}(\ve) \cap \wt{I}_\infty$.
\item[(ii)] The boundary of $\wt{\cW}$ is a union of hypersurfaces denoted
  $\wt{X}_{\ad}$, $\wt{X}_0$, $\wt{X}_p$ for $p\in P$ and
  $\wt{I}_\infty$.  Here 
\bee
\wt{X}_0 = \ol{\AHr},\; \wt{X}_p = \ol{\TN},\;\; (p \in P),
\eee
and 
\bee\label{e1.13.10.20}
\wt{\pi}^{-1}(0) = \wt{X}_{\ad} \cup \wt{X}_0 \cup \bigcup_{p \in P} \wt{X}_p.
\eee
\item[(iii)] The boundary hypersurface $\wt{I}_\infty$ is not compact but
  $\wt{\pi}|\wt{I}_\infty \to I$
is a smooth submersion onto $I$ and 
\bee\label{e11.23.10.20}
\wt{\pi}^{-1}(\ve)\cap \wt{I}_\infty = \p\ol{M_\ve}.
\eee
\item[(iv)]  The boundary hypersurfaces $\wt{X}_{\ad}$ and
  $\wt{I}_{\infty}$ are {\em fibred}: each is the total space of
a  principal circle-bundle
\bee
\wt{\phi}_{\ad} : \wt{X}_{\ad} \longrightarrow \wt{Y}_{\ad} = [\ol{\RR^3}; 
0,P],\;\;\wt{\phi}_\infty : \wt{I}_\infty \to \wt{Y}_\infty  = S^2 \times I.
\eee
\item[(v)] The boundary hypersurface $\wt{X}_{\ad}$ is itself a
  manifold with boundary.  Its boundary hypersurfaces (i.e.\ the
  connected components of $\p\wt{X}_{\ad}$) are denoted
  $\p_p\wt{X}_{\ad}$ for $p\in P\cup \{0\}$ 
(the internal boundary hypersurfaces) and $\p_\infty\wt{X}_{\ad}$
(the boundary at infinity).  Then we have
\bee\label{e1.28.10.20}
\p_p \wt{X}_{\ad} = \p \wt{X}_p = \wt{X}_{\ad} \cap \wt{X}_p\mbox{ and }
\p_\infty \wt{X}_\infty = \p \wt{I}_\infty
\eee
in the decomposition \eqref{e1.13.10.20}.
\item[(vi)] Given that $P=-P$, there is a smooth involution $\iota$ on
  $\wt{\cW}$ which acts on the fibres of $\pi$ ($\pi\circ \iota =
  \pi$) and which covers (the lift of) $x\mapsto -x$ on $\ol{\RR^3}$.
\end{enumerate}
\end{prop}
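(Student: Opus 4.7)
The plan is to construct $\wt{\cW}$ as an iterated real blow-up, packaging the family $\{\ol{\SEN}_{k,\ve}\}_{\ve>0}$ into a single 5-manifold with corners and resolving the adiabatic degeneration at $\ve=0$ so that the constituent model geometries ($\ol{\AHr}$, $k$ copies of $\ol{\TN}$, and an ``adiabatic'' face) emerge as separate boundary hypersurfaces. The guiding idea is that near each $p\in P$ the rescaling $\td{x}=(x-p)/\ve$ converts $h_{\ve}$ into the standard Taub--NUT harmonic function, and near $0$ the rescaling $\td{x}=x/\ve$ matches Definition~\ref{n1.21.11.20} of $g_{\AH,\ve}$; each such rescaling amounts, geometrically, to a real blow-up of the point $(p,0)$ in the base.

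First I construct the 4-dimensional base. Begin with $\ol{\RR^3}\times I$, where $\ol{\RR^3}$ is the radial compactification with $S^2$ boundary. Blow up the $2k+1$ corner points $\{(p,0):p\in P\cup\{0\}\}$ and then the codimension-$2$ corner $S^2\times\{0\}$ at spatial infinity. The resulting space $\wt{Y}$ is a 4-manifold with corners whose boundary hypersurfaces are: the front faces $F_p$ of the point blow-ups ($p\in P\cup\{0\}$); the proper transform $\wt{Y}_{\ad}=[\ol{\RR^3};0,P]$ of $\ol{\RR^3}\times\{0\}$; and the lift $\wt{Y}_\infty=S^2\times I$ of the corner at spatial infinity. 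I then build $\wt{\cW}$ by attaching a circle direction to $\wt{Y}$: over $\wt{Y}_{\ad}$ and over the interior, use the Gibbons--Hawking principal circle bundle of \S\ref{AGH}; this produces $\wt{X}_{\ad}$ with its claimed fibration. Over each front face $F_p$ with $p\in P$, attach a copy of $\ol{\TN}$ (justified by the Taub--NUT rescaling), and over $F_0$ attach $\ol{\AHr}$ (justified by the Atiyah--Hitchin asymptotics \eqref{AHmetricasy} and the bundle map $\kappa$ of \S\ref{improved}); the common corner in each case is identified via the Hopf bundle over the relevant $S^2$, which is precisely the structure of $\p\ol{\TN}$ and of the corresponding boundary sphere of $\ol{\AHr}$.

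Having constructed $\wt{\cW}$, the verifications are mainly bookkeeping. The map $\wt\pi$ is inherited from projection $\ol{\RR^3}\times I\to I$ and lifts through each blow-up; it is a $\bo$-fibration with the point-blow-up faces lying over $\{0\}$ (so $\wt{X}_0$, $\wt{X}_p$, and $\wt{X}_{\ad}$ sit inside $\wt\pi^{-1}(0)$) while $\wt{I}_\infty$ submerses onto $I$, giving (i)--(iii). For $\ve>0$, $\wt\pi^{-1}(\ve)$ reproduces the compactified $\wt{\SEN}_{k,\ve}$: away from the blow-up centers the base is $\ol{\RR^3}\times\{\ve\}$ and the circle bundle is the GH one from \eqref{e1.22.8.18}, while near each blown-up point the rescaled model agrees with the attached $\ol{\TN}$ or $\ol{\AHr}$ fiber by \S\ref{improved}. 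The fibred structures in (iv) and the corner identifications in (v) are by construction. For (vi), the involution $x\mapsto-x$ is symmetric under the set of blow-up centers since $P=-P$ and fixes $\{0\}$; it therefore lifts to $\wt{Y}$, swapping $F_p\leftrightarrow F_{-p}$ and preserving $F_0$ and $\wt{Y}_{\ad}$. Lifting to $\wt{\cW}$ uses $\iota^*\alpha=-\alpha$ from \S\ref{AGH} on the adiabatic side and the natural $\ZZ_2$-action on $\ol{\TN}$ (exchanging the pair $\pm p$) and on $\ol{\AHr}$ (the generator $s$ of Fig.~\ref{AHdiag}) on the model sides; compatibility of these choices at the common corners is arranged by the normalization $\kappa^*\alpha_0=\alpha'$.

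The main technical obstacle is the smooth matching at the corners $\p_p\wt{X}_{\ad}=\p\wt{X}_p$: one must verify that the circle-bundle structure on the adiabatic side and the intrinsic structure of the attached model on the other side combine into a genuine smooth 5-manifold with corners, with the correct orientation, circle-bundle degree, and connection-form normalization. This is where the order of the blow-ups (points before the corner at infinity) and the calibration condition $\kappa^*\alpha_0=\alpha'$ of \S\ref{improved} are essential, and where the adiabatic $\phi$-calculus framework of \cite{CCN} provides the correct language for ``smoothness up to the boundary of fibrations''; the remaining assertions about smooth $\bo$-fibrations and fibred boundary hypersurfaces then follow from standard properties in that formalism.
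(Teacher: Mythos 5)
Your overall strategy (blow up the base, lift to a circle bundle, glue in the model $4$-manifolds, lift the involution) is essentially the paper's, but there are two genuine errors in the execution.

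First, the extra real blow-up of the codimension-$2$ corner $S^2\times\{0\}$ at spatial infinity does not occur in the paper's construction and would break the proposition. In \eqref{e11.10.10.20} the base is $\wt{\cB}=[\ol{\RR^3}\times I;(0,0),P\times\{0\}]\cap\{|x|>\ve/\delta\}$: only the interior points $(p,0)$, $p\in P\cup\{0\}$, of the boundary face $\ol{\RR^3}\times\{0\}$ are blown up. Blowing up the corner $S^2\times\{0\}$ would insert an additional front face between the lift of $\ol{\RR^3}\times\{0\}$ and $S^2\times I$, so that $\wt{X}_{\ad}$ and $\wt{I}_\infty$ would no longer meet. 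That directly contradicts item~(v) of the proposition, which asserts $\p_\infty\wt{X}_{\ad}=\p\wt{I}_\infty$, and would also add a boundary hypersurface not in the list in (ii). (Also note that the points $(p,0)$ are not ``corner points'' of $\ol{\RR^3}\times I$; the only codimension-$2$ corner of the un-blown-up product is $S^2\times\{0\}$.)

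Second, the identification at the corner $\p_0\wt{X}_{\ad}=\p\wt{X}_0$ is \emph{not} via the Hopf fibration. From the asymptotic potential $1+\mu\ve-2/|x'|$ of $\AHr$ and Proposition~\ref{prop_alf_extension} (with $\ell=-4$), the bundle $\p\ol{\AHr}\to S^2$ has degree~$4$; only the faces $\p\wt{X}_p$, $p\in P$, with potential $1+\frac{1}{2|x'_p|}$, give the degree-$(-1)$ (Hopf) bundle. This matters: it is precisely the degree-$4$ behaviour (coefficient $-2$ in $h_\ve$ near $0$) that makes the $\AHr$ gluing possible at all.

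A third, smaller, point is one of organisation rather than correctness: the paper does not ``attach a copy of $\ol{\TN}$'' over each $F_p$. Instead $\wt{\cW}_1$ is defined by pulling back the whole $4$-manifold $M_\ve$ (with its NUT points, i.e.\ the Seifert-fibred quotient map $\phi$, not merely the free circle bundle) over $\wt{\cB}$; the $\ol{\TN}$ faces then appear automatically, and the NUT loci $\phi^{-1}(\Sigma_p)$, which extend along all of $\Sigma_p$ and not just over $F_p$, are handled for free. Only one gluing, at $F_0$, is then required (Definition~\ref{d1.29.11.20}). Your ``attach over each $F_p$'' version would need separate compatibility checks at each NUT and along each $\Sigma_p$, with no gain. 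I would redo the construction following the paper's two-piece decomposition $\wt{\cW}=\wt{\cW}_0\cup_{\vc{\kappa}}\wt{\cW}_1$, omitting the corner blow-up and correcting the degree of the bundle at $\p\wt{X}_0$.
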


\begin{dfn} Granted this result, define $\cW = \wt{\cW}/\iota$
  (Figure~\ref{maintheorempic}).
  \end{dfn}

\begin{figure}[h]
\centering
\begin{tikzpicture}[>=stealth,scale=1.7]
\filldraw[lightgray] 
(-.5,.6) -- (-.5,0) -- (5,0) -- (5,0.6)-- cycle;
\filldraw[white] (.75,0) arc (180:0:.25) --cycle;
\filldraw[white] (2.75,0) arc (180:0:.25) --cycle;
\filldraw[white] (3.75,0) arc (180:0:.25) --cycle;
\draw[thick,black]
(-.5,.6) -- (-.5,0) -- 
(.75,0) arc (180:0:.25) --
(2.75,0) arc (180:0:.25) --
(3.75,0) arc (180:0:.25) --
(5,0) --(5,0.6);
\draw[->] (5.2,.25) -- (6, .25);
\draw[thick,black] (6.3,0) -- (6.3, .6);
\begin{scriptsize}
\draw (2,0) node[below] {$X_{\ad}$};
\draw (1.,0) node {$X_0$};
\draw (3.,0) node {$X_1$};
\draw (4.,0) node {$X_2$};
\draw (-.5,0.3) node[left] {$I_\infty$};
\draw (6.3,0.3) node[right] {$I$};
\filldraw (6.3,0) circle (0.5pt);
\draw (6.3,0.6) circle (0.5pt);
\filldraw[white] (6.3,0.6) circle (0.25pt);
\draw (5.6,.25) node[below] {$\pi$}; 
\end{scriptsize}
\begin{tiny}
\draw (6.3,-0.05) node[right] {$\ve=0$};
\end{tiny}
\end{tikzpicture}
\caption{Schematic picture of the space $\cW$ with boundary faces
  labelled.   For $\ve>0$, $\pi^{-1}(\ve)$ is the compactification
  $\ol{\SEN}_k$ of
  $\SEN_k$, but $\pi^{-1}(0) = X_{\ad} \cup X_0 \cup \cdots \cup
  X_k$.  $X_{\ad}$ is the total space of a circle-bundle with base
  $Y_{\ad}$ and $I_\infty$ is the total space of a circle-bundle with
  base $Y_\infty$.
}
\label{maintheorempic}
\end{figure}

This space has properties precisely analogous to (i)---(v) above 
and we shall use symbols unadorned by $\wt{\mbox{ }}$ for the
corresponding objects in $\cW$.   Since $\wt{X}_p$ and $\wt{X}_{-p}$
are identified by $\iota$, it is more convenient to enumerate the
images of these boundary hypersurfaces in $\cW$ by $X_1,\ldots,
X_k$. Each of these is still $\ol{\TN}$, but $X_0 = \ol{\AHd}$.   For
$\ve>0$, $\pi^{-1}(\ve) \subset \cW$ is the compactification
$\ol{\SEN}_{k,\ve}$ of the Sen space at scale $\ve$, the fibred
boundary being $\pi^{-1}(\ve)\cap I_\infty = \p(M_\ve/\iota) =
S^3/\cD_{k^*}$.

One more definition is needed before we can state the improved version
of our main theorem, that of the {\em rescaled vertical tangent bundle
} $T_{\phi}(\cW/I)$ of $\cW$, relative to $\pi$.
If $f: X \to Y$ is any submersion of manifolds without boundary, the
sub-bundle of vectors tangent to the fibres of $f$ will be 
denoted $T(X/Y)$ and the space of smooth $f$-vertical vector fields by
$\cV(X/Y)$.   

\begin{dfn}\label{def_rescaled}
Let $\rho$ be a smooth boundary-defining function of $X_{\ad}$, let
$\sigma_I$ be a smooth boundary-defining function of $I_\infty$. Assume
without loss that $\rho = \pi^*(\ve)$ in 
a collar neighbourhood of $I_\infty$.  A vector field $v$ on $\cW$ 
is called a {\em $\phi$-vector field} if $v$ is tangent to all
boundary hypersurfaces and in addition
\bee\label{e1.14.10.20}
v|X_{\ad} \in \cV(X_{\ad}/Y_{\ad}),\; v|I_\infty \in
\cV(I_\infty/Y_\infty)\mbox{ and }
v(\rho\sigma_I) \in \rho^2\sigma_I^2 C^\infty(\cW).
\eee
The space of all smooth $\phi$-vector fields on an open subset $U$ of
$\cW$ is denoted $\cV_{\phi}(U)$.  The subspace of $\cV_{\phi}(U/I)$
consists of $v\in \cV_{\phi}(U)$ which in addition are tangent to the
fibres of $\pi$.   By the Serre--Swan theorem, there is a vector
bundle to be denoted $T_{\phi}(\cW/I) \to \cW$
whose full space of sections $C^\infty(\cW,T_{\phi}(\cW/I))$ is equal
to $\cV_{\phi}(\cW/I)$.
\end{dfn}

This definition should be compared with the simpler
definition~\ref{d2.31.8.20} of $\cV_{\phi}(X)$ and $T_{\phi}X$ where
$X$ is a manifold with fibred boundary.  Indeed, for the restriction
of $T_\phi(\cW/I)$ to an interior fibre of $\pi$ we have
\bee
  T_{\phi}(\cW/I)|\pi^{-1}(\ve) = T_\phi \pi^{-1}(\ve) = T_{\phi}\ol{\SEN}_{k,\ve}\mbox{ for }\ve>0.
 \eee
We also have
\bee
  T_{\phi}(\cW/I)|X_\nu = T_{\phi} X_\nu\mbox{ for }\nu=0,\ldots,k.
 \eee
 The restriction of $T_\phi(\cW/I)$ to $X_{\ad}$ is  locally  spanned by the lifts from
  $M_{\ve}$ to $\cW$ of
\bee
\ve \frac{\p}{\p x_1}, \ve \frac{\p}{\p x_2}, \ve \frac{\p}{\p x_3},
\frac{\p}{\p \theta},
\eee
where $\theta$ is a locally defined angular fibre variable in
$X_{\ad}$.  The point is that this is a local frame for
$T_\phi(\cW/I)$ even at $\ve=0$.

Denote by $g_{\nu}$ the ALF
hyperK\"ahler metric on $X_\nu$, regarded as a smooth metric on
$T_\phi X_\nu$, cf.\ Proposition~\ref{prop_alf_extension}. (We allow
ourselves to blur the distinction between the complete metric on the
interior and the smooth metric on the compactification.)
Then 
the improved form of Theorem~\ref{version0} is as follows:
\begin{thm}
\label{mainthm}
Let $\cW$ and $T_\phi(\cW/I)$ be as defined above.  There exists
$\ve_0>0$ and a smooth metric $\bg_{\SEN}$ on $T_\phi(\cW/I)$ such that
$\bg_{\SEN}|\pi^{-1}(\ve)$ is a smooth $D_k$ ALF hyperK\"ahler metric on
$\SEN_{k}$.  Moreover,
\bee
\bg_{\SEN}|X_\nu = g_{\nu}\mbox{ and }
\bg_{\SEN}|X_{\ad} = g_{\ad}:=\frac{|\rd x|^2}{\ve^2} + \alpha_{\ad}^2,
\eee
viewed as a quadratic form on $T_\phi(\cW/I)|X_{\ad}$.  Here
$\alpha_{\ad} = \alpha|X_{\ad}$ and $\alpha$ is the connection
$1$-form defined in the Gibbons--Hawking Ansatz with potential
$h_{\ve}$, see \eqref{e1.22.8.18}.
        \label{main_thm}     \end{thm}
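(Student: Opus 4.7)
The plan is to prove Theorem~\ref{mainthm} by the standard scheme of \emph{approximate solution plus implicit function theorem}, working with hyperK\"ahler triples in the $\phi$-calculus adapted to the corner structure of $\cW$. I recall that a hyperK\"ahler metric is equivalent to a triple $\bom = (\omega_1, \omega_2, \omega_3)$ of closed $2$-forms with $\omega_i\wedge \omega_j = 2\delta_{ij}\,\nu_{\bom}$, where $\nu_{\bom} = \tfrac{1}{6}\sum_i \omega_i\wedge\omega_i$; the equations are first order and well adapted to deformation arguments on manifolds with corners.

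First I would construct an \emph{approximate} triple $\bom^{\chi}$, smooth on $T_\phi(\cW/I)$, by gluing via a cutoff in the overlap region $V_1$ the Gibbons--Hawking triple of $(M_\ve/\iota, g_\ve)$ to the rescaled Atiyah--Hitchin triple of $(\AHr, g_{\AH,\ve})$, using the bundle map $\kappa$ of \S\ref{improved} to identify them. On $V_1$ the matching $\kappa^*\alpha_0 = \alpha'$, together with \eqref{e2.9.1.20} and the exponentially small correction to the negative-mass Taub--NUT form \eqref{AHmetricasy} of the Atiyah--Hitchin metric, ensures that the two triples agree modulo $O(\ve|x|^2)$. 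I would then iteratively correct the gluing error by polyhomogeneous terms along each boundary hypersurface of $\cW$ so that the nonlinear error vanishes to infinite order at every face; this is presumably the content of Proposition~\ref{p1.17.1.20} in \S\ref{formalsol_sec}.

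Second, I would set up the nonlinear map $\Phi(\bom) = \bigl(d\bom,\;\omega_i\wedge\omega_j - 2\delta_{ij}\nu_{\bom}\bigr)$, augmented with a Bianchi-type gauge fixing, so that the linearization $L := D_{\bom^\chi}\Phi$ is elliptic in the $\phi$-calculus on $\cW$. The hard part is uniform invertibility of $L$ as $\ve\to 0$, which I would attack by computing normal operators at each boundary hypersurface. At $X_p = \ol{\TN}$ and $X_0 = \ol{\AHd}$ these are the linearized hyperK\"ahler operators at the exact Taub--NUT and Atiyah--Hitchin metrics, whose invertibility on appropriate weighted $L^2$-spaces follows from the known rigidity of these gravitational instantons. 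At $X_{\ad}$ the normal operator is a family of Laplace-type operators on the $S^1$-bundle $\wt{\phi}_{\ad}\colon X_{\ad}\to Y_{\ad} = [\ol{\RR^3};0,P]$; Fourier decomposition in the circle direction reduces it to scalar elliptic problems on the blown-up base $Y_{\ad}$ with matching conditions at the internal faces $\p_p\wt{X}_{\ad}$ that encode the normal operators on $X_p$ and $X_0$. Assembling these model pieces into a $\phi$-parametrix should yield uniform invertibility of $L$ on weighted $\phi$-Sobolev spaces, modulo a finite-dimensional obstruction that I expect to be killed by the $\iota$-equivariance of the construction and by matching against the known cohomologies of the rigid constituents.

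Once uniform invertibility of $L$ is in hand, a standard Banach fixed-point argument applied to the Newton-type map $\bom \mapsto \bom - L^{-1}\Phi(\bom)$ on a small ball in $\bom^\chi + \rho^N\sigma_I^N \cdot W^{k,2}_{\phi}$ would produce, for $\ve < \ve_0$ small, an exact solution $\bom_{\SEN}$ and hence the desired metric $\bg_{\SEN}$. Smoothness on $T_\phi(\cW/I)$ and the matching $\bg_{\SEN}|X_\nu = g_\nu$, $\bg_{\SEN}|X_{\ad} = g_{\ad}$ are then automatic from the polyhomogeneous construction of $\bom^\chi$; for each $\ve > 0$ the restriction to $\pi^{-1}(\ve)$ is a smooth hyperK\"ahler metric whose smooth extension to the fibred boundary $\pi^{-1}(\ve)\cap I_\infty = S^3/\cD_{k^*}$ certifies the $D_k$ ALF asymptotics.
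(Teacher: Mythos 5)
Your overall scheme---hyperK\"ahler triples, an approximate triple $\bom^\chi$ smooth on $T_\phi(\cW/I)$, then the implicit function theorem with uniform control as $\ve\to0$---does match the paper's, and your guess about the content of Proposition~\ref{p1.17.1.20} is in the right direction. But there are two substantive gaps in the linear analysis, both traceable to the same missing idea.

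The paper does not attack the linearization $L$ directly via normal operators and rigidity of Taub--NUT and Atiyah--Hitchin. Instead (cf.\ \eqref{e12.2.4.20}--\eqref{e6.26.3.20} and Theorem~\ref{th_poisson_1}) it seeks the correction in the form $a = D^*u$, where $D = \rd^* + \rd_+$ is the Dirac operator determined by the triple. The composite $DD^*$ is related to $\nabla^*\nabla$ by a Weitzenb\"ock formula whose curvature terms ($W_+$ and scalar curvature) vanish for a hyperK\"ahler metric, and---crucially---because a hyperK\"ahler triple gives a flat orthonormal trivialization of $\Lambda^2_+$, the bundle $\Lambda^0\oplus\Lambda^2_+$ is trivialized and $DD^*$ becomes twelve copies of the \emph{scalar} Laplacian. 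The entire invertibility argument (Theorems~\ref{t1.1.4.20} and~\ref{t11.26.3.20}) is then an argument about $\Delta_g$ on functions, patched from inverses on the model faces $X_\nu$ and an explicit adiabatic Green's operator near $X_{\ad}$ built from \eqref{e2.7.8.18}. Your approach of inverting the full linearized hyperK\"ahler operator face by face is considerably heavier and, as written, not worked out.

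Related to this, your anticipation of a ``finite-dimensional obstruction... killed by $\iota$-equivariance and known cohomologies of the rigid constituents'' is not what happens, and the proposed fix would not obviously work. Precisely because the reduction is to the scalar Laplacian on weighted $\bo$-Sobolev spaces, injectivity follows from integration by parts and the Fredholm alternative gives surjectivity with no cokernel at all (Theorem~\ref{t11.26.3.20}). There is no obstruction to kill. Invoking $\iota$-equivariance or the second cohomology of the constituents to remove a hypothetical cokernel is a hand-wave that the paper never needs, and without the scalar reduction you would have to do real work to rule out small eigenvalues of the glued linearized operator as $\ve\to0$. Finally, two smaller points: the formal solution in \S\ref{formalsol_sec} produces a \emph{smooth} (not merely polyhomogeneous) triple on $\cW$, by a two-stage iteration alternating corrections $\ba$ supported near $\bigcup X_\nu$ with corrections $\bbb$ supported near $X_{\ad}$; and the gauge-fixing is not an external Bianchi-type augmentation but is built into the map $\cF$ of Theorem~\ref{perturbative}, whose vanishing already implies hyperK\"ahlerity and whose linearization is exactly $D$.
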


  \begin{rmk}
Putting $g_{\SEN,\ve}:=\bg_{\SEN}|\pi^{-1}(\ve)$ we recover
Theorem~\ref{version0}.   The rough interpretation of $g_{\SEN,\ve}$
as giving a superposition of the Atiyah--Hitchin and Taub--NUT
geometries is here
replaced by the statement that $\bg_{\SEN}|X_{\nu} = g_{\nu}$ for each
$\nu$. The
assertion that $\bg_{\SEN}$ is a smooth metric on $T_{\phi}(\cW/I)$
gives a precise meaning to the sense in which the family $g_{\ve}$ is
smooth in $\ve$ down to $\ve=0$.
\end{rmk}

The use of manifolds with corners in the analysis of partial
differential equations in non-compact and singular settings was
pioneered by Richard Melrose.  Of particular relevance to the
underlying analytical techniques are
\cite{mazzeo1990,GreenBook,MM_FB,scat}.    We note also references such
as \cite{gpaction,MZ17,MZ18,conlon2019} in which techniques including
real blow-up and rescaling the tangent bundle are used in a variety of
geometric contexts.   From this point of view, the combination of a
gluing theorem  with an adiabatic limit appears to be new---see
however, the following remark.

\begin{rmk}  We have set things up so that
the asymptotic size of the circle remains $\sim 2\pi$ as
$\ve \to 0$, while the length-scale of the base goes to
$\infty$. Alternatively, the rescaled metrics
$\ve^2g_{\SEN_k,\ve}$ give a family of metrics which 
are collapsing to the euclidean metric on $(\RR^3\setminus P)/\{\pm 1\}$.
In this formulation, we have a non-compact version of Foscolo's result
\cite{Foscolo16}, where a family of hyperK\"ahler
metrics on the K3 surface was constructed, collapsing to the flat
metric on $\TT^3/\{\pm 1\}$, away from a finite set of points.
\end{rmk}

\begin{rmk}
The fact that $\bg$ is {\em smooth} on $\cW$, rather than merely
having a polyhomogeneous conormal expansion---see
\cite{MZ17,MZ18}, for an example of this phenomenon---is due to 
special geometric features of the problem, which simplify the analysis
at a number of points.  We take full 
advantage of this, rather than attempting to develop the machinery
in generality.
\end{rmk}

\subsection{Further background}

ALF gravitational instantons can be interpreted and realised in a
number of  different ways. A gauge-theoretical model  was proposed by
Cherkis and Kapustin  in \cite{CherkisKapustin99}, where they showed
that the moduli space of a smooth and unit-charge  $SU(2)$ monopole
moving in the background of $k$ singular $U(2)$ monopoles is an 
 $A_{k-1}$ ALF   gravitational instanton of the Gibbons--Hawking form, and argued that
the moduli space of a  smooth and strongly centred charge-two $SU(2)$
monopole moving in the background of $k$ singular $U(2)$ monopoles is
a $D_k$  ALF  gravitational instanton.

Subsequently,  Cherkis and
Hitchin \cite{CherkisHitchin05} used twistor methods and a generalised
Legendre transform developed in  \cite{LindstromRocek88} and
\cite{IvanovRocek96}  for  a rigorous construction of $D_k$  ALF 
instantons. However, it was only shown  in \cite{Minerbe11} that all
$A_k$  ALF gravitational instantons are of the Gibbons--Hawking form
and, even more recently in \cite{ChenChen19},  that all  $D_k$ ALF 
gravitational instantons  are described by the Cherkis--Hitchin--Ivanov--Kapustin--Lindstr\"om--Ro\v{c}ek metric which emerged from the papers  \cite{LindstromRocek88,IvanovRocek96,CherkisKapustin99,CherkisHitchin05}. 

The interpretation of the $A_k$ and $D_k$ ALF spaces as moduli space
of monopoles with prescribed singularities provides useful intuition
about the geometry of these spaces. In particular, it suggests that,
at least for singularities which are well-separated from the origin,
one should be able to isolate a core region of the $D_k$ ALF space
where the smooth and strongly centred monopoles 
do  not `see' the $k$ singular monopoles and which should therefore be well-approximated by the Atiyah--Hitchin geometry. The picture also suggests that there should be an asymptotic region of the moduli space where the two smooth monopoles,  with their centre fixed at the origin,  are well-separated and move between (and over) the $k$ singularities. Orienting the line joining the smooth monopoles  amounts to double-covering  this part of the moduli space, and so this  double cover should  be 
 well-approximated by the moduli space of a  single monopole moving in background of $2k$ symmetrically spaced singularities, i.e. by the $A_{2k-1}$ ALF geometry. 

It is this intuitive picture,  also captured in Sen's proposal,  which our theorem makes precise. It differs from that underlying the  gluing construction using ALE instantons and the Eguchi-Hanson geometry,  carried out in \cite{Auvray18} and \cite{BiquardMinerbe11}, even though the mathematical techniques are related. It also clarifies that our method will not produce all $D_k$ ALF gravitational  instantons, but only those where the $k$ singularities are well-separated from the origin.

\subsection{Plan}
The proof of Theorem~\ref{mainthm} proceeds via a reformulation of
the problem it addresses  in a number of ways.   Instead of solving
for  hyperK\"ahler metrics we  use the formalism of  hyperK\"ahler
triples \cite{Donaldson:2:forms} to obtain an elliptic formulation of the gluing problem.
This method is described 
in \S\ref{triple_sec}. Then,  instead of working on non-compact
manifolds  we formulate the problem on compact spaces with fibred
boundaries. In  \S\ref{phi_sec}, we  explain  why these
compactified spaces are  natural  domains  for the family of
hyperK\"ahler metrics on the Sen spaces 
$\SEN_k$, and how the structure of the compact manifolds is such that the asymptotic
behaviour  of the metric can be  encoded  in smoothness and decay at all boundary
hypersurfaces. This is dealt with in \S\ref{s_gluing_space}.  In
\S\ref{formalsol_sec}, we construct a smooth 
triple on the fibres of $\cW$, which is hyperK\"ahler to all orders in
$\ve$.   The proof of Theorem~\ref{mainthm}
is completed in  final \S\ref{completion} by using the inverse function theorem
to perturb this family to be {\em exactly} hyperK\"ahler for all
sufficiently small positive $\ve$.   A number of technical results are
deferred to the appendices.

\subsection{Outlook}
Our construction  and main result can be extended in a
number of ways by replacing the space $\AHd$ with other dihedral ALF
gravitational instantons and adapting the adiabatic Gibbons--Hawking
Ansatz correspondingly.  

The simplest generalisation in this spirit is to replace $\AHd$ by
$\AH$, which is an example of a $D_1$ ALF gravitational instanton.
The branched cover of $\AH$ is  the manifold $\AHu$ whose asymptotic behaviour,
when written in the standard Gibbons--Hawking form, is that of a
singular $A_2$ space with a single pole of  weight $-2$. Thus one
could construct $D_k$ ALF gravitational instantons for $k\geq 2$  by
replacing $-\frac{2\ve}{|x|}$ by $-\frac{\ve}{|x|}$ in $h_{\ve}$ and
filling the hole near $x=0$ with $\AHu$ and then
dividing by the involution $\iota$.  
In this way one would obtain  $D_k$ instantons for $k\geq 2$  which are 
 not included in the family constructed here, but which arise in the
 limit  as  one pair of NUTs approaches zero.

More generally still, we could replace $h_{\ve}$ in \eqref{e1.9.1.20}
by 
$$
1  + (k'-2)\frac{\ve}{|x|} + \sum_{\nu=1}^k 
m_{\nu}\left(\frac{\ve}{2|x-p_{\nu}|} + \frac{\ve}{2|x+p_{\nu}|} \right) 
$$
for positive integers $m_{\nu}$.   This function is engineered so that
if we are given a $D_{k'}$ ALF gravitational instanton $M_0$ and for
each $\nu$ an $A_{m_{\nu}-1}$ gravitational instanton $M_{\nu}$, there
will be a 
version $\cW_m$, say,
of $\cW$ in which $X_0$ is the compactification of $M_0$ and $X_\nu$ is
the compactification of $M_{\nu}$.  It should be possible to
extend Theorem~\ref{mainthm} to construct $D_K$ ALF hyperK\"ahler
metrics on 
$\pi^{-1}(\ve) \subset \cW_m$,  where $K = k' + \sum_{\nu} m_\nu$. 
Again, the $D_K$ ALF metrics produced from these examples would
correspond to limiting cases of those obtained from
Theorem~\ref{mainthm}. 

Finally, one may wonder if   the geometrical interpretation of the
5-manifold $W$ in Theorem \ref{mainthm}  can be extended  from one
where each fibre has a metric to a fully geometrical picture,  with a
5-dimensional metric on $W$. This is possible for the (single NUT)
Taub--NUT geometry, which   can be extended to a warped product with a
Lorentzian geometry  satisfying  the (4+1)-dimensional  Einstein
equations \cite{GibbonsLuPope05}. The coordinate $t=1/\ve$ is a
natural time coordinate in this geometry, which fascinated Michael
Atiyah as a possible time-dependent model of the electron
\cite{AFS}. It would clearly  be interesting if this solution could be
generalised to natural  5-dimensional metrics on the manifold $W$ for
general ALF instantons, but we have not pursued this here. 

\section{HyperK\"ahler triples}

\label{triple_sec}

It is a remarkable fact that conformal geometry in four dimensions can
be described almost entirely in terms of the geometry of the bundle of
$2$-forms.  Here we review this, showing in particular how the
hyperK\"ahler condition in four dimensions can be completely described
in terms of orthonormal symplectic triples (Theorem~\ref{folk}).   The
section continues with some consequences for the analysis of
hyperK\"ahler metrics from this point of view.

Let $M$ be an oriented $4$-manifold.
\begin{dfn}
A {\em symplectic triple} on $M$ is a triple
$\omega= (\omega_1,\omega_2,\omega_3)$ of symplectic forms on $M$ such that the matrix $q$ with
$$
q_{ij} = \omega_i\wedge \omega_j
$$
is positive-definite at every point.  A {\em hyperK\"ahler triple} is
a symplectic triple for which $q$ is a multiple of the identity at
every point.
\end{dfn}

\begin{rmk}  Since 
$q$ is a symmetric $3\times 3$ matrix with values in
$\Lambda^4T^*$, it is worth spelling out what is meant by
`positive-definite' in this definition.   Let 
$\nu \in  C^\infty(M,\Lambda^4T^*)$ be any smooth
positive section, denote by $\nu^{-1}$ the dual section of $\Lambda^4
T$. (Such positive sections exist because $M$ is oriented.)  Then $\nu^{-1}q$ is a genuine
symmetric $3\times 3$ matrix at each point.   To say that $q$ is
positive-definite is to say that $\nu^{-1}q$ is positive-definite for
one or equivalently any positive section $\nu$ of $\Lambda^4T^*$.
Another way of giving this definition is to choose a riemannian metric
on $M$, and hence a Hodge $*$-operator.  

\end{rmk}

If $\omega$ is a hyperK\"ahler triple, taking the trace of $q_{ij}\,
\propto\, \delta_{ij}$, we obtain
\begin{equation}\label{e2.30.9.19}
\omega_i \wedge \omega_j =
\frac{1}{3}(\omega_1^2+\omega_2^2+\omega_3^2)\delta_{ij}.
\end{equation}
Thus this equation holds if and only if $\omega$ is a hyperK\"ahler
triple. The reason for the introduction of these triples, and the notation, is
explained by the following:

\begin{thm}  Let $M$ be an oriented $4$-manifold. If
  $(\omega_1,\omega_2,\omega_3)$ is a symplectic triple on $M$, then
  there is a unique metric $g$ on $M$ with
  \begin{equation}\label{e3.30.9.19}
    \Lambda^2_+(g) = \la \omega_1,\omega_2,\omega_3\ra
  \end{equation}
  and $\rd \mu_g = \tr(q)$.   If $\omega$ is a hyperK\"ahler triple, 
  then $g$ is hyperK\"ahler and the 
$\omega_j$ are the K\"ahler forms associated with the three complex
structures, 
$$
\omega_j(\xi,\eta) = g(\xi,I_j\eta).
$$
\label{folk}\end{thm}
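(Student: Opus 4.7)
The plan is to split the theorem into two stages: a purely pointwise linear-algebraic construction of $g$ from the triple (whose global assembly is automatic because the construction is algebraic and smooth in the data), followed by a separate argument establishing the hyperK\"ahler conclusion when $q$ is a multiple of the identity. The pointwise construction rests on the identification, in dimension four, of oriented conformal structures with maximal positive $3$-planes in $\Lambda^2$; the hyperK\"ahler conclusion rests on the fact that a closed, pointwise $(2,0)$-form forces integrability of the associated almost complex structure.

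\textbf{Pointwise construction and uniqueness.} Fix $p\in M$ and set $V_p = \langle\omega_{1,p},\omega_{2,p},\omega_{3,p}\rangle \subset \Lambda^2 T_p^*M$. The hypothesis that $q_{ij}$ is positive-definite means that the $\omega_{i,p}$ are linearly independent and that the quadratic form $\alpha\mapsto\alpha\wedge\alpha$, valued in $\Lambda^4T_p^*M$, is positive-definite on $V_p$. The linear-algebraic fact I would invoke is that, for an oriented $4$-dimensional real vector space $T_pM$, the wedge product equips $\Lambda^2T_p^*M$ with a natural $\Lambda^4T_p^*M$-valued quadratic form of signature $(3,3)$, and the map $[g]\mapsto\Lambda^2_+(g)$ is a bijection between oriented conformal structures on $T_pM$ and maximal positive $3$-planes in $\Lambda^2T_p^*M$. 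Applying this pointwise, there is a unique oriented conformal class $[g_p]$ with $\Lambda^2_+(g_p) = V_p$, and the prescription $d\mu_{g_p} = \tr(q_p)$ then selects a unique representative within that class. Smoothness of $g$ follows because the construction is algebraic in the $\omega_i$, and uniqueness follows because any $g'$ satisfying the two properties must share both the conformal class and the volume form with $g$.

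\textbf{HyperK\"ahler implication.} Assume now $q_{ij} = \lambda\,\delta_{ij}\,\mu$, pointwise, for some positive function $\lambda$ and a positive $4$-form $\mu$. By the construction of $g$ each $\omega_j$ is self-dual, and combining $\omega_i\wedge\omega_j = \lambda\delta_{ij}\mu$ with $d\mu_g = \tr(q) = 3\lambda\mu$ shows that $|\omega_j|_g$ is a constant and that the $\omega_j$ are pairwise $g$-orthogonal. Define endomorphisms $I_j$ of $TM$ by $\omega_j(\xi,\eta) = g(\xi,I_j\eta)$. A pointwise computation in an orthonormal frame (using that self-dual $2$-forms of constant norm correspond, in the appropriate normalization, to orthogonal transformations squaring to $-\mathrm{id}$) shows that $\omega_i\wedge\omega_j\propto\delta_{ij}$ forces $I_j^2=-\mathrm{id}$ and $I_iI_j=\epsilon_{ijk}I_k$ for $i\neq j$, so $(M,g,I_1,I_2,I_3)$ is almost hyperHermitian. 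To upgrade this to hyperK\"ahler it suffices to show each $I_j$ is integrable. For this I would consider the complex $2$-form $\omega_2+i\omega_3$: it is pointwise of type $(2,0)$ with respect to $I_1$ and non-degenerate on $\Lambda^{2,0}_{I_1}T^*M$, and it is closed. A standard argument due to Hitchin identifies the $(1,2)$-component of $d(\omega_2+i\omega_3)$ with the contraction of the Nijenhuis tensor of $I_1$ against this non-degenerate $(2,0)$-form, so closedness forces the Nijenhuis tensor of $I_1$ to vanish. Symmetrically $I_2, I_3$ are integrable; since each $\omega_j$ is closed and is the fundamental $2$-form of the Hermitian pair $(g,I_j)$, each such pair is K\"ahler, and hence $(M,g,I_1,I_2,I_3)$ is hyperK\"ahler with K\"ahler forms $\omega_j$.

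\textbf{Main obstacle.} The linear-algebraic construction of $g$ is routine; the substantive content is the last step, deducing integrability of the $I_j$ from closedness of the triple. The care here lies in tracking the normalization so that $I_j^2 = -\mathrm{id}$ holds on the nose (rather than up to a positive multiple), and in the precise identification of the $(1,2)$-component of $d(\omega_2+i\omega_3)$ with a contraction of the Nijenhuis tensor. It is at this step that four-dimensionality and the non-degeneracy of the triple genuinely enter the argument; everything else is formal.
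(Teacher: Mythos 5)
Your first step --- the pointwise identification of oriented conformal structures on $T_pM$ with maximal $H$-positive $3$-planes in $\Lambda^2 T_p^*M$, followed by fixing the conformal factor via the prescribed volume form --- is exactly the paper's argument (steps 1 and 2 of its proof), so the two agree on the construction and uniqueness of $g$.

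For the hyperK\"ahler implication you take a genuinely different route. The paper stays entirely at the level of $2$-forms: given the $H$-positive bundle $P$ and a volume form, it constructs the unique metric connection $\nabla$ on $P$ satisfying the torsion-free condition $\mathrm{skew}(\nabla\varphi) = \rd\varphi$, identifies this with the Levi-Civita connection restricted to $\Lambda^2_+(g)$, and then notes that when the $\omega_j$ are closed the torsion-free equations force the connection forms $u_j$ to vanish, so $\nabla\omega_j = 0$. The triple is then a parallel orthonormal trivialization of $\Lambda^2_+(g)$, which directly reduces the holonomy and gives the hyperK\"ahler property without ever invoking the $I_j$ or the Nijenhuis tensor. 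You instead pass to the endomorphisms $I_j$, verify the quaternion relations algebraically, and appeal to Hitchin's argument that for the nowhere-zero $(2,0)$-form $\omega_2+i\omega_3$ (relative to $I_1$) the $(1,2)$-part of its exterior derivative is a contraction of the Nijenhuis tensor of $I_1$, so that closedness forces integrability; combined with closedness of each $\omega_j$, this yields three K\"ahler structures for the same $g$. Both routes are correct. The paper's connection-theoretic version dispatches the integrability and the parallelism in one stroke and avoids the pointwise quaternion and Nijenhuis computations; your version is closer to the classical hyperK\"ahler literature and makes explicit where closedness of the triple enters.

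One point you correctly flag with "in the appropriate normalization": with $\rd\mu_g = \tr(q)$ and the standard Riemannian volume convention, the endomorphisms defined by $\omega_j(\xi,\eta) = g(\xi, I_j\eta)$ square to a (fixed) negative constant multiple of the identity rather than to $-\mathrm{id}$ on the nose; getting $I_j^2=-\mathrm{id}$ exactly requires $\rd\mu_g = \tfrac{1}{6}\tr(q)$. This is a shared normalization wrinkle, not a defect of your approach, and it does not affect the structure of either argument.
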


Equivalently, the complex structures act on 1-forms according
\begin{equation}\label{e2.29.10.20}
  I_j \beta = *_g(\omega_j \wedge \beta).
\end{equation}

The above result has the status of a folk-theorem and there are many
closely related results in the hyperK\"ahler literature.
For example, Lemma~6.8 of \cite{HSDRS} is related to this result, but
there the existence of the metric $g$ is assumed, rather than derived
from the triple of forms.  A relatively recent reference for triples
of $2$-forms in $4$ dimensions is \cite{Donaldson:2:forms}.

For the convenience of the reader we recall the main points of the proof.
\begin{proof}[Proof of Theorem~\ref{folk}]  \hfill
\begin{enumerate}
\item  Let $V$ be an oriented four-dimensional real vector space, and
  let $H$ be the conformal quadratic form $H(\omega) =
  \omega\wedge\omega$ on $\Lambda^2 V^*$.  If $g$ is a 
  positive-definite inner product on $V$, then the space $P_g =
  \Lambda^2_+(g) \subset \Lambda^2 V^*$ of $g$-self-dual $2$-forms is
  a maximal $H$-positive subspace. As is well known, $P_g$ depends
  only upon the conformal class of $g$.   We claim that this is a
  bijection.  This is a manifestation of a low-dimensional special
  isomorphisms of homogeneous spaces.  The space of conformal inner
  products on $V$ may be identified with $\SL(4,\RR)/\SO(4)$, but we
  have the double covers $\SO(3,3) \to \SL(4,\RR)$ and $S(O(3)\times
  O(3)) \to  \SO(4)$. Thus the space of conformal inner products is
  the same as the homogeneous space $\SO(3,3)/S(O(3)\times O(3))$, and
  this is the Grassmannian of $H$-positive maximal subspaces of
  $\Lambda^2 V^*$.
\item By definition, a symplectic triple spans an $H$-positive subspace
  of $\Lambda^2 T^*$ at every point, and so by what has just been shown, it
  determines a conformal metric on $M$.  The condition $\rd \mu_g =
  \tr(q)$ clearly picks out a metric in the conformal class.
\item Let $P$ be an $H$-positive maximal sub-bundle of $\Lambda^2
  T^*M$ and let a volume form $\rd \mu$ on $M$ be given.   Then $\rd\mu$
turns the conformal metric $H|P$ into a genuine metric,
  $h$, say. We claim there is a unique connection 
  $\nabla$ on $P$ which preserves $h$ and which is
  torsion-free in the sense that
\bee\label{e1.29.10.20}
\mbox{skew}(\nabla \varphi) = \rd \varphi
\eee
for any local section $\varphi$ of $P$.  On the LHS, $\nabla \varphi$ is a
section of $T^*\otimes P \subset T^* \otimes \Lambda^2 T^*$, and
$\mbox{skew}$ is just the map into $\Lambda^3T^*$.

The claim can easily be proved by choosing a local orthonormal frame
$\varphi_1, \varphi_2,\varphi_3$ for $P$.  If $\nabla$ is a metric
connection on $P$, then for locally defined $1$-forms $u_j$, we have
\begin{eqnarray}
\nabla \varphi_1 &=&  u_3\otimes \varphi_2  - u_2 \otimes \varphi_3,  \nonumber\\
\nabla \varphi_2 &=&  -u_3 \otimes \varphi_1 + u_1\otimes \varphi_3, \\
\nabla \varphi_3 &=&  u_2 \otimes \varphi_1 - u_1\otimes \varphi_2. 
\nonumber
\end{eqnarray}
The torsion-free condition is then
\bee\label{e4.29.10.20}
\rd\varphi_1 =u_3 \wedge \varphi_2 - u_2\wedge \varphi_3,\;\mbox{ etc.}
\eee
and this system has a unique solution
\bee\label{e3.29.10.20}
u_1 =\hlf(*\rd \varphi_1  + I_3*\rd \varphi_2 - I_2*\rd \varphi_3),
\eee
with similar formulae for $u_2$ and $u_3$.   Here $(I_1,I_2,I_3)$ is the
triple of almost-complex structures determined by \eqref{e2.29.10.20}
with $\omega_j$ replaced by $\varphi_j$ and the quaternion relations
$I_1^2=I_2^2 = I_3^2 = I_1I_2I_3 = -1$ are used in the derivation of
\eqref{e3.29.10.20}. 
\item With $P$ as above, the unique $h$-compatible torsion-free
  connection just described must be equal to the Levi-Civita
  connection of the metric $g$ determined by $P$ and with volume
  element $\rd \mu$, since the Levi-Civita connection also has these
  two properties.   In particular, if $\omega$ is a hyperK\"ahler 
  triple, then we may apply the above with $\varphi_j = \omega_j$, $\rd
  \mu = \tr(q)$ to obtain that the $u_j =0$.  Thus the hyperK\"ahler
  triple gives a flat (with respect to the Levi-Civita connection)
  orthonormal trivialization of $\Lambda^2_+(g)$, and so $g$ is
  hyperK\"ahler with complex structures given by \eqref{e2.29.10.20}.
\end{enumerate}

\end{proof}

The formulation of the hyperK\"ahler condition in terms of triples of
forms has the advantage of leading to a nonlinear partial differential
equation which is elliptic modulo gauge freedom.  This has been
exploited already in, for example, \cite{Foscolo16,ChenChen19}.   We
now review the details of this.

\subsection{Perturbative formulation}

For any triple of $2$-forms $\omega$, set
\begin{equation}
Q(\omega) = 
\omega_i \wedge \omega_j -
\frac{1}{3}(\omega_1^2+\omega_2^2+\omega_3^2)\delta_{ij}.
\end{equation}
Then $Q$ is a symmetric, trace-free $3\times 3$ matrix, with values in
$\Lambda^4T^*M$ and by \eqref{e2.30.9.19}, $Q(\omega)=0$ if and only
if $\omega$ is a hyperK\"ahler triple.

We shall study the perturbative version of this equation.  That is,
we fix a symplectic triple $\omega$ with $Q(\omega)$ small and seek a
($C^1$-small) triple of $1$-forms $a$ so that 
\begin{equation}\label{e5.30.9.19}
Q(\omega + \rd a) = 0.
\end{equation}
More formally, $a\mapsto Q(\omega+\rd a)$ is a nonlinear differential
operator
\begin{equation}\label{e6.30.9.19}
\Omega^1(M)\otimes \RR^3 \longrightarrow C^\infty(M,
  S^2_0\RR^3\otimes \Lambda^4).
\end{equation}
This equation cannot be elliptic as the rank of the bundle on the left
here is $12$, while the rank on the right is $5$.  The difference in
ranks, $7$, is accounted for by the gauge-freedom of the problem.
Indeed,  \eqref{e5.30.9.19} is left invariant
by the action of the orientation-preserving diffeomorphisms
$\Diff^+(M)$; it is clearly also unchanged if $a$ is replaced by
$a+\rd f$, for any triple of functions $f=(f_1,f_2,f_3)$.  Thus there are
$7$ gauge degrees of freedom, and this count matches the difference in
the ranks of the bundles in \eqref{e6.30.9.19}. 

By fixing the gauge, we shall obtain an elliptic equation for the
triple $a$.  To state the
theorem, write $D$ for the coupled Dirac operator
\begin{equation}\label{e7.30.9.19}
\rd^* + \rd_+:  \Omega^1(M) \longrightarrow
\Omega^0(M) \oplus \Omega^2_+(M).
\end{equation}
This is determined by the metric $g(\omega)$ of Theorem~\ref{folk}.  To
avoid excessive notation, we shall not distinguish between $D$ and its
tripled version, in which every bundle in \eqref{e7.30.9.19} is
tensored with $\RR^3$.

\begin{lem}
Let $\omega$ be a symplectic triple as above and let $a$ be a triple
of $1$-forms.  Let $\wh{\omega} = \omega +\rd a$ and
let $q = (q_{ij}) = \omega_i\wedge\omega_j$.  Write
$p_{ij}$ for the inverse matrix $q^{-1}$ and $R = R_{ij}$ for the
matrix
\begin{equation}\label{e4.29.7.19}
R = \frac{1}{2}Q(\omega) + \frac{1}{2}Q(\rd a),
\end{equation}
Then the equation
\begin{equation}\label{e11.29.7.19}
\rd_+ a_j = -R_{js}p_{sk}\omega_k,
\end{equation}
implies that $\omega+\rd a$ is a hyperK\"ahler triple.
\label{pertlem}\end{lem}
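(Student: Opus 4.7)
The plan is to show directly that $Q(\wh{\omega})=0$ by expanding and using the prescribed equation.

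First I would expand
\begin{equation*}
Q(\omega+\rd a)_{ij} = Q(\omega)_{ij} + Q(\rd a)_{ij} + \bigl(\omega_i\wedge \rd a_j + \rd a_i\wedge \omega_j\bigr) - \tfrac{2}{3}\delta_{ij}\sum_k \omega_k\wedge \rd a_k,
\end{equation*}
so that the task reduces to showing the cross terms equal $-2R_{ij}$. Since $R=\tfrac12 Q(\omega)+\tfrac12 Q(\rd a)$, the lemma will then follow immediately.

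Next I would use that the triple $\omega$ spans $\Lambda^2_+$ of the metric $g(\omega)$ supplied by Theorem~\ref{folk}, so that $\rd a_j\wedge \omega_i = (\rd_+ a_j)\wedge \omega_i$ (the $g(\omega)$-anti-self-dual part of $\rd a_j$ is orthogonal to $\omega_i$ in the wedge pairing). Expanding the self-dual part in the basis $\omega_k$ and substituting the equation $\rd_+ a_j = -R_{js}p_{sk}\omega_k$, I get
\begin{equation*}
\omega_i\wedge \rd a_j = -R_{js}p_{sk}\,\omega_i\wedge \omega_k = -R_{js}p_{sk}q_{ik} = -R_{js}\delta_{si} = -R_{ji},
\end{equation*}
using $p=q^{-1}$. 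Because $Q(\omega)$ and $Q(\rd a)$ are symmetric by construction, so is $R$, giving $\omega_i\wedge \rd a_j + \rd a_i\wedge \omega_j = -2R_{ij}$. Similarly, $\sum_k \omega_k\wedge \rd a_k = -\tr(R) = 0$, since $R$ is trace-free (both summands defining it are). Plugging these back into the expansion produces
\begin{equation*}
Q(\omega+\rd a) = Q(\omega) + Q(\rd a) - 2R = 0,
\end{equation*}
as required.

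There is no genuine obstacle here: the content of the lemma is precisely that the equation $\rd_+ a_j = -R_{js}p_{sk}\omega_k$ has been reverse-engineered so that the cross term in the quadratic expansion of $Q$ exactly cancels the sum $Q(\omega)+Q(\rd a)$ via the identity $p q = \mathrm{Id}$. The only subtlety to keep in mind is the orthogonality of self-dual and anti-self-dual $2$-forms under the wedge pairing, which is what lets $\rd_+ a_j$ (rather than $\rd a_j$) appear on the left-hand side of the gauge-fixed equation.
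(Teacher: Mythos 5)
Your proof is correct and follows essentially the same reasoning as the paper's: you expand $Q(\omega+\rd a)$, use the orthogonality of self-dual and anti-self-dual $2$-forms under wedge to pass from $\rd a_j$ to $\rd_+ a_j$, and invoke $pq=\mathrm{Id}$ to collapse the cross term to $-2R$. The paper presents the computation in the reverse order — deriving \eqref{e11.29.7.19} from the requirement that the cross term cancel $Q(\omega)+Q(\rd a)$ rather than verifying it directly — but the algebra and the key observations are identical.
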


\begin{rmk}
Here the summation convention holds for all repeated indices.
Furthermore, $q^{-1}$ is a matrix with values in $\Lambda^4 T$, $R$ is
a matrix with values in $\Lambda^4 T^*$ and these dual factors cancel
on the RHS of \eqref{e11.29.7.19}, giving a triple of `honest'
$2$-forms.
\end{rmk}

\begin{proof}
From the definitions, $Q(\omega +\rd a)=0$ is equivalent to the equation
\begin{equation}\label{e1.29.7.19}
Q(\omega,\omega) + 2Q(\omega,\rd a) + Q(\rd a,\rd a) = 0,
\end{equation}
where we have commited an abuse of notation by writing
$Q(\omega,\eta)$ for 
the polarized version of the quadratic form $Q$ (i.e.\
$Q(\omega,\omega) = Q(\omega)$).  Thus for 
triples of $2$-forms $\omega$ and $\eta$,
$Q(\omega,\eta)$ is by definition the projection onto the trace-free
symmetric part of the matrix $(\omega_i\wedge \eta_j)$. Thus
\eqref{e1.29.7.19} is implied by 
$$
Q(\omega)_{js} + 2 \rd a_j \wedge \omega_s + Q(\rd a)_{js}=0,
$$
which we rewrite as
\begin{equation}\label{e2.29.7.19}
  \rd a_j \wedge \omega_s = - R_{js},
\end{equation}
using the definition of $R$, and we claim this is equivalent to
\eqref{e11.29.7.19}. 

For the metric $g=g(\omega)$ of Theorem~\ref{folk}, $\Lambda^2_+(g)$
is spanned by the $\omega_j$, and so we have
\begin{equation}\label{e1.1.10.19}
\rd_+ a_j = u_{js}\omega_s
\end{equation}
for some collection of functions $u_{js}$. Then
\eqref{e2.29.7.19} is equivalent to
$$
u_{jk}\omega_k\wedge \omega_s =  u_{jk}q_{ks} = - R_{js}
$$
and multiplying by the inverse of $q$, we obtain
$$
u_{jk} = -R_{js}p_{sk}
$$
and hence, using \eqref{e1.1.10.19},
$$
\rd_+ a_j = -R_{js}p_{sk}\omega_k,
$$
as required.
\end{proof}

\begin{rmk}
If $\omega$ is itself a hyperK\"ahler triple then $Q(\omega)=0$
and \eqref{e11.29.7.19} takes the simpler form
$$
\rd_+ a_j = -\frac{1}{2}\nu^{-1}Q(\rd a)_{jk}\omega_k,
$$
where $\nu = q_{jj}/3 = (\omega^2_1+\omega^2_2 + \omega^2_3)/3.$
\end{rmk}

The following gives our elliptic formulation of the perturbative
hyperK\"ahler problem.
\begin{thm} Let the notation be as in Lemma~\ref{pertlem}.  Define a
  nonlinear mapping
  $$
  \cF: \Omega^1(M)\otimes \RR^3\longrightarrow
(  \Omega^0(M) \oplus \Omega^2_+(M) )\otimes \RR^3
  $$
  by
  \begin{equation}\label{e12.29.7.19}
    a_j \mapsto (\rd^* a_j , \rd_+ a_j + R_{js}p_{sk}\omega_k).
  \end{equation}
Then if $\omega+\rd a$ is a symplectic triple and $\cF(a)=0$, it
follows that $\omega+\rd a$ is a hyperK\"ahler triple. Furthermore,
the linearization of $\cF$ at $a=0$ is the 
(tripled version of the) Dirac operator $D_{g(\omega)}$.
\label{perturbative}\end{thm}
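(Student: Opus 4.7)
The statement has two independent assertions, and my plan is to address them in order; neither is deep, but the second requires careful bookkeeping of which quantities depend on $a$.

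For the first assertion, I would read off the conclusion directly from Lemma~\ref{pertlem}. The condition $\cF(a)=0$ splits into a gauge-fixing equation $\rd^* a_j = 0$, which plays no role in establishing the hyperK\"ahler property, and the equation $\rd_+ a_j = -R_{js}p_{sk}\omega_k$, which is identical to \eqref{e11.29.7.19}. Under the stated symplectic hypothesis, Lemma~\ref{pertlem} then yields that $\omega + \rd a$ is a hyperK\"ahler triple.

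For the linearization at $a = 0$, I would decompose $\cF = (\cF_0, \cF_1)$ and treat each component. The map $a \mapsto \rd^* a_j$ is already linear, as is the contribution $a \mapsto \rd_+ a_j$ in $\cF_1$. Everything therefore reduces to verifying that the nonlinear remainder $N(a) := R_{js}(a)\,p_{sk}\,\omega_k$ has vanishing first variation at $a=0$. To see this, observe that $p = q^{-1}$ and the $\omega_k$ are independent of $a$; the entire $a$-dependence of $R$ resides in $\hlf Q(\rd a)$, which by definition of $Q$ is homogeneous quadratic in $\rd a$, hence in $a$. Thus $N(ta) = N(0) + O(t^2)$, and the linearization of $\cF$ at $a = 0$ is $a \mapsto (\rd^* a, \rd_+ a)$, precisely the tripled Dirac operator $D_{g(\omega)}$ of \eqref{e7.30.9.19}.

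There is no serious obstacle to this argument; the one subtlety worth flagging is that $N(0) = \hlf Q(\omega)_{js} p_{sk}\omega_k$ is generally nonzero, so $\cF$ does \emph{not} vanish at $a = 0$. This is precisely the feature that makes $\cF(a)=0$ an obstruction equation detecting and correcting the failure of $\omega$ to be hyperK\"ahler. Since the offending term is $a$-independent, it contributes nothing to the linearization, while the gauge-fixing component $\rd^* a$ matches the ranks of source and target bundles so that $D_{g(\omega)}$ is elliptic---the key feature that will be exploited by the inverse function theorem argument in \S\ref{completion}.
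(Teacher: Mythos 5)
Your proof is correct and takes essentially the same route as the paper, whose own proof consists of the single line ``Immediate from Lemma~\ref{pertlem}.'' Your version simply spells out what that phrase compresses: the second component of $\cF(a)=0$ reproduces \eqref{e11.29.7.19} verbatim, and the only $a$-dependence in $R_{js}p_{sk}\omega_k$ lives in the quadratic term $\hlf Q(\rd a)$ while $p=q^{-1}$ and the $\omega_k$ are $a$-independent, so the derivative at $a=0$ is the tripled $\rd^*\oplus\rd_+$.
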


\begin{proof} Immediate from Lemma~\ref{pertlem}.
\end{proof}

Let us write $\cF$ in the form
\bee\label{e11.2.4.20}
\cF(a) = D a + e + \wh{r}(\rd a),
\eee
where $e = \cF(0)$ and $\wh{r}(\rd a)$ is the part of
$R_{js}p_{sk}\omega_k$ which is quadratic in $\rd a$.  Then $e$ will
be small if $\omega$ is approximately hyperK\"ahler.  To find a small
$a$ solving \eqref{e11.2.4.20}, we shall seek
\bee\label{e12.2.4.20}
a = D^*u,\;\; u \in (\Omega^0(M) \oplus \Omega^2_+(M))\otimes \RR^3.
\eee
Substituting into \eqref{e11.2.4.20},
\begin{equation}\label{e6.26.3.20}
\cF(a) = 0 \Leftrightarrow DD^* u = - e -\wh{r}(\rd D^*u).
\end{equation}
A standard Weitzenb\"ock formula relates $DD^*$ to the rough Laplacian
$\nabla^*\nabla$ of the metric $g= g(\omega)$, the curvature terms being
the self-dual part $W_+$ of the Weyl curvature and the scalar
curvature $s$.  These both vanish if $g$ is hyperK\"ahler and will be
small if $e$ is small.  This suggests that if $e$ is small enough, then
$DD^*$ should be invertible, given a suitable Fredholm framework for
$DD^*$, and then \eqref{e6.26.3.20} should be solvable for $u$ by the
implicit function theorem.    Since we want to solve $\cF(a)=0$ on an
ALF space, finding the right Fredholm framework is one of the
technical issues that we deal with this in this paper: then we shall
be able to apply the implicit function theorem to \eqref{e6.26.3.20}
to construct hyperK\"ahler triples on $\SEN_k$, thereby proving
Theorem~\ref{mainthm}.

\subsection{HyperK\"ahler triples for Gibbons--Hawking} 
\label{s_primitives_gh}

It is easy to verify that the forms
  \begin{eqnarray}
    \omega_1 &=&
\alpha \wedge \frac{\rd x_1}{\ve} + h_\ve \frac{\rd x_2\wedge \rd 
x_3}{\ve^2}, \nonumber\\
    \omega_2 &=&
\alpha \wedge \frac{\rd x_2}{\ve} + h_\ve \frac{\rd x_3\wedge \rd 
x_1}{\ve^2},
                 \label{e11.1.10.19} \\
        \omega_3 &=&
\alpha \wedge \frac{\rd x_3}{\ve} + h_\ve \frac{\rd x_1\wedge \rd 
x_2}{\ve^2} \nonumber
  \end{eqnarray}
  form a hyperK\"ahler triple for the adiabatic Gibbons--Hawking
  metric $g_{\ve}$ of \eqref{e1.22.8.18}.  In the rest of this section
  we study these forms near $x=0$: Corollary~\ref{prim_1} will be
  needed at the start of the gluing construction in \S\ref{formalsol_sec}.

In the punctured ball $B^*(0,\delta)\subset \RR^3$ we have
\begin{equation}\label{e11b.9.1.20}
  h_\ve = H_\ve + u_\ve,
\end{equation}
where
\begin{equation}\label{e11c.9.1.20}
H_\ve(x) = 1 + \mu \ve - \frac{2\ve}{|x|},\;\; u_{\ve}(x) = O(\ve|x|^2),
\end{equation}
and $u_{\ve}$ is smooth and harmonic in $B(0,\delta)$ (cf.\
\S\ref{improved}). 
There is a
corresponding decomposition
\bee\label{e23.23.10.20}
\alpha = \alpha_0 + \alpha_1,\; \rd\alpha_0 = *_{\ve} \rd H_{\ve},
\alpha_1 = *_{\ve} \rd u_{\ve},
\eee
(cf.\ \eqref{e6.10.10.20}) where $\alpha_0$ is a connection on the
principal $U(1)$-bundle $\phi^{-1}B^*(0,\delta)$ and $\alpha_1$ is a smooth $1$-form on $B(0,\delta)$.
 Since the triples \eqref{e11.1.10.19} are linear in $\alpha$
and $h_{\ve}$, we may write
\bee\label{e21.23.10.20}
\omega_j = \Omega_j + \eta_j,
\eee
where
\bee\label{e51.2.4.20}
\Omega_1 = \alpha_0 \wedge\frac{\rd x_1}{\ve} + H_{\ve}\frac{\rd 
  x_2\wedge \rd x_3}{\ve^2},\;
\eta_1 = \alpha_1 \wedge\frac{\rd x_1}{\ve} + u_{\ve}\frac{\rd 
  x_2\wedge \rd x_3}{\ve^2},\, \mbox{ etc}
\eee
in $\phi^{-1}B^*(0,\delta)$.

Now as a closed $2$-form on $B(0,\delta)$, $\eta_j$ must be exact.
The next result shows how to choose a primitive for $\eta_j$ whose
size is controlled by the size of $u_{\ve}$.  
In the following,  $|f|_{\ve}$ is
the pointwise length of a $p$-form with respect to the metric $|\rd
x|^2/\ve^2$.  The quantities in the next Proposition may depend
smoothly upon
$\ve$ as well as $x$ but we often suppress this from the notation. 
\begin{prop} \label{primitive2}
Let $u=u_\ve = O(\ve|x|^n)$ be smooth and harmonic in
$B(0,\delta)\subset \RR^3$, where $n$ is a positive integer. Let the $\eta_j$ be defined as in \eqref{e51.2.4.20}.
Then there exists a primitive $b_j$ of $\eta_j$ (i.e.\ $\rd b_j = \eta_j$)
such that $|b_j|_{\ve} = O(|x|^{n+1})$ in $B(0,\delta)$.
\end{prop}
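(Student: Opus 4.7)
The plan is to apply the Poincaré homotopy operator on the contractible ball $B(0,\delta)$ in two successive steps: first to produce a primitive $\alpha_1$ of $*_\ve \rd u_\ve$ with good pointwise decay, and then to produce the desired primitive $b_j$ of $\eta_j$. All component estimates are carried out in the Cartesian frame and converted to the $\ve$-metric at the very end using $|\beta|_\ve = \ve^p |\beta|_{\mathrm{eucl}}$ for a $p$-form $\beta$ in Cartesian coordinates, which is immediate from $g_\ve^{ij} = \ve^2\delta^{ij}$.

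First, the $\alpha_1$ appearing in \eqref{e23.23.10.20} is determined only up to an exact $1$-form on $B(0,\delta)$, so without loss of generality I may take $\alpha_1$ to be the output of the Poincaré homotopy applied to $*_\ve \rd u_\ve$. Since $u_\ve = O(\ve|x|^n)$ is harmonic, interior elliptic estimates give the uniform-in-$\ve$ bound $|\partial u_\ve|_{\mathrm{eucl}} = O(\ve|x|^{n-1})$ on any strictly smaller ball; hence the closed $2$-form $*_\ve \rd u_\ve = \ve^{-1}\bigl[(\partial_1 u_\ve)\rd x_2\wedge \rd x_3 + \text{cyclic}\bigr]$ has Cartesian components $O(|x|^{n-1})$, and the Poincaré primitive $\alpha_1$ is a smooth $1$-form on $B(0,\delta)$ with Cartesian components $O(|x|^n)$.

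Second, with this $\alpha_1$ fixed, the Cartesian components of each $\eta_j$ in \eqref{e51.2.4.20} are $O(|x|^n/\ve)$: the piece $\alpha_1\wedge \rd x_j/\ve$ contributes $O(|x|^n)\cdot \ve^{-1}$, and the piece $u_\ve\,\rd x_k\wedge \rd x_l/\ve^2$ contributes $O(\ve|x|^n)\cdot\ve^{-2}$. Each $\eta_j$ is closed on $B(0,\delta)$, since $\eta_j = \omega_j - \Omega_j$ is the difference of two closed Gibbons--Hawking triples (built respectively from $h_\ve = H_\ve + u_\ve$ and from $H_\ve$). A second application of the Poincaré homotopy then gives a smooth $b_j$ with $\rd b_j = \eta_j$ and Cartesian components $O(|x|^{n+1}/\ve)$.

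Finally, converting to the $\ve$-metric for the $1$-form $b_j$ yields $|b_j|_\ve = \ve|b_j|_{\mathrm{eucl}} = O(|x|^{n+1})$ uniformly in $\ve$, as required. No serious obstacle is expected: the argument combines Poincaré's lemma on a ball with careful tracking of $\ve$-powers; the only substantive step is the initial choice of $\alpha_1$ via the homotopy, which is precisely what forces the two contributions to $\eta_j$ to be of the same pointwise order and delivers a primitive without an extra factor of $\ve^{-1}$ in the final estimate.
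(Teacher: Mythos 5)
Your argument is correct and is essentially the paper's: apply a quantitative Poincar\'e lemma once to choose $\alpha_1$ with $\rd\alpha_1 = *_\ve\rd u_\ve$ and good decay, then again to produce $b_j$; the only difference is that you track Euclidean components and convert to $|\cdot|_\ve$ at the end, whereas the paper works with the $\ve$-norms throughout (and the two bookkeeping schemes are equivalent since $|\beta|_\ve=\ve^p|\beta|_{\mathrm{eucl}}$). One small simplification worth noting: the step $|\partial u_\ve|_{\mathrm{eucl}}=O(\ve|x|^{n-1})$ does not need harmonicity or interior elliptic estimates --- it follows already from smoothness of $u_\ve$ jointly in $(x,\ve)$ and the vanishing to order $n$ in $x$ and order $1$ in $\ve$, which is how the paper obtains the analogous bound $|\rd u_\ve|_\ve = O(\ve^2|x|^{n-1})$.
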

\begin{proof} We need a quantitative form of the Poincar\'e
  lemma, which we state in the following slightly more general form:
  Let $f$ be a closed $p$-form in $B(0,\delta)$ such that $|f|_{\ve} = 
  O(|x|^m)$.  Then there exists
  a $(p-1)$-form $v$ in $B(0,\delta)$ with
  \bee\label{e25.23.10.20}
\rd v = f\mbox{ and }|v|_{\ve} = O(\ve^{-1}|x|^{m+1}).
  \eee
This follows at once from the proof of the Poincar\'e lemma using the radial
retraction of $B(0,\delta)$ to the origin, taking due care of the
fact that we are using the metric $|\rd x|^2/\ve^2$ to measure the
lengths or our forms.

Consider first the equation $\rd \alpha_1 = *_\ve \rd u_{\ve}$ for
$\alpha_1$.  We note that $|\rd u|_{\ve} = O(\ve^2|x|^{n-1})$, and so $*_\ve
\rd u$ is of the same order since $*_{\ve}$ is an isometry on forms
with respect to the metric $|\rd x|^2/\ve^2$.  From \eqref{e25.23.10.20},
$\alpha_1$ can be chosen in \eqref{e23.23.10.20} with $|\alpha_1|_{\ve}
=O(\ve|x|^{n})$.   Then
$|\eta_j|_{\ve} = O(\ve|x|^n)$, and applying
\eqref{e25.23.10.20} again, we find a primitive $b_j$ for $\eta_j$
with $|b_j|_{\ve} = O(|x|^{n+1})$ as claimed.
\end{proof}

Applying this in the case of interest:
\begin{cor}\label{prim_1}
With the above definitions, there is a triple of smooth $1$-forms $b_{\ve}$
in $B(0,\delta)$ such that
\begin{equation}
  \omega_{\ve} = \Omega_{\ve}+ \rd b_{\ve} \mbox{ in } B^*(0,\delta)
\eee
and 
\bee
|b|_{\ve} = O(|x|^3) \mbox{ in } B(0,\delta).
\eee

\end{cor}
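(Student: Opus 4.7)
The plan is to apply Proposition~\ref{primitive2} directly, since the decomposition $\omega_j = \Omega_j + \eta_j$ introduced in \eqref{e21.23.10.20}--\eqref{e51.2.4.20} is tailor-made for that proposition. The only task is to check which value of the integer $n$ is appropriate in our situation.

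Looking at \eqref{e11b.9.1.20}--\eqref{e11c.9.1.20}, the remainder $u_{\ve} = h_{\ve} - H_{\ve}$ is smooth and harmonic on $B(0,\delta)$, and satisfies $u_{\ve}(x) = O(\ve |x|^2)$. The key point, already stressed after \eqref{e2.9.1.20}, is that the reflection symmetry $h_{\ve}(-x) = h_{\ve}(x)$ forces the Taylor expansion of $u_{\ve}$ at the origin to have no linear term, so the leading behaviour is quadratic rather than linear. This means $n = 2$ in the hypothesis of Proposition~\ref{primitive2}. Applying that proposition separately to each $j = 1,2,3$ produces smooth $1$-forms $b_j$ on $B(0,\delta)$ with $\rd b_j = \eta_j$ on $B^*(0,\delta)$ and $|b_j|_{\ve} = O(|x|^{n+1}) = O(|x|^3)$. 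Assembling these into the triple $b_{\ve} = (b_1,b_2,b_3)$ and using $\omega_{\ve} - \Omega_{\ve} = \eta_{\ve}$ gives
\[
\omega_{\ve} = \Omega_{\ve} + \rd b_{\ve} \text{ on } B^*(0,\delta), \qquad |b_{\ve}|_{\ve} = O(|x|^3) \text{ on } B(0,\delta),
\]
which is the statement of the corollary.

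There is really no obstacle at the level of the corollary itself: all of the analytical content has been packaged into Proposition~\ref{primitive2}. The one subtlety worth flagging, which is handled inside the proof of that proposition, is that $|\cdot|_{\ve}$ is measured in the rescaled metric $|\rd x|^2/\ve^2$, and the quantitative Poincar\'e lemma along radial rays produces a primitive of size $O(\ve^{-1} |x|^{m+1})$ from a closed form of size $O(|x|^m)$. Applying this first to $*_{\ve} \rd u_{\ve}$ (of size $O(\ve^2 |x|)$) yields $\alpha_1$ of size $O(\ve |x|^2)$, hence $\eta_j$ of size $O(\ve|x|^2)$; a second application then produces $b_j$ of size $O(|x|^3)$, matching the stated estimate exactly. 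With $n = 2$ these two applications are precisely what is needed, and the corollary follows.
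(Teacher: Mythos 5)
Your argument is exactly the one the paper intends: Corollary~\ref{prim_1} is an immediate application of Proposition~\ref{primitive2} with $n=2$, and you correctly identify the role of the reflection symmetry $h_\ve(-x)=h_\ve(x)$ in forcing $u_\ve = O(\ve|x|^2)$ rather than $O(\ve|x|)$, which is what makes $n=2$ the right exponent. The extra commentary tracing the bookkeeping in the $|\cdot|_\ve$-norms through the two applications of the quantitative Poincar\'e lemma is accurate and matches the proof of Proposition~\ref{primitive2}.
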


\subsection{HyperK\"ahler triples for $\AH$}
\label{sect_AH_epsilon}
A hyperK\"ahler triple for the Atiyah--Hitchin metric was written down in 
\cite{IvanovRocek96}.   We shall not use this directly, because the
important thing for us is to compare the $\AH$ triple with the triple
of the asymptotic Taub--NUT model.  This is straightforward because
these metrics differ by exponentially small terms \eqref{AHmetricasy}.

Return to the family $g_{\AH,\ve}$ of Atiyah--Hitchin metrics which
are given asymptotically  (up to the involution $\iota$) by the negative-mass
Taub--NUT metric with potential
\bee
H'_{\ve}(x') = 1 + \mu\ve - \frac{2}{|x'|},
\eee
(cf.\ \eqref{e2.9.1.20}) for $|x'| > 1/\delta$, and let $g'_{\ve}$ be
the asymptotic metric \eqref{e2.10.10.20}.  Write the hyperK\"ahler
triple for $g_{\AH,\ve}$ as a sum of terms
\bee\label{e}
\omega_{\AH,\ve} = \Omega'_{\ve} + \eta',
\eee
where $\Omega'_{\ve}$ is the hyperK\"ahler triple of $g'_{\ve}$. 
The result parallel to Corollary~\ref{prim_1} is
\begin{prop}
  \label{prim_2}
Let the notation be as above. Then there exists
a triple of $1$-forms $a$ such that for $\ve \geq 0$,
$$
\omega_{\AH,\ve} = \Omega'_{\ve} + \rd a
$$
with $a$ (and all derivatives) exponentially decaying as $|x'|\to \infty$.
\end{prop}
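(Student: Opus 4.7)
The strategy is to construct $a$ directly by integrating the exponentially small closed form $\eta' := \omega_{\AH,\ve} - \Omega'_\ve$ along an outward radial flow in the asymptotic region, using a Poincar\'e-type homotopy adapted to the non-compact setting.

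First I would verify that $\eta'$ is closed and that $\eta'$ together with all its derivatives decays exponentially in $|x'|$. Closedness is immediate: both $\omega_{\AH,\ve}$ and $\Omega'_\ve$ are hyperK\"ahler triples, hence triples of closed $2$-forms. The exponential decay is a repackaging of \eqref{AHmetricasy} and \eqref{AHasy}: the coefficient functions in \eqref{AHmetric} approach their Taub--NUT asymptotic values with exponentially small corrections, and the ODE system they satisfy (inherited from hyperK\"ahlerity) propagates this decay to all $\tau$-derivatives. Combining with the $SO(3)$-equivariance of the construction yields decay of $\eta'$ and all of its frame derivatives.

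Next I would produce the primitive via a homotopy operator. Choose a vector field $V$ on the asymptotic region that lifts through the principal circle bundle $\psi$ a radially outward euclidean dilation on the base, generating a flow $\Phi_t$ that pushes points to infinity at exponential rate in $t$. Since $\iota_V \eta'$ inherits the exponential decay of $\eta'$, the integral
\[
a \;:=\; -\int_0^\infty \Phi_t^{\,*} \iota_V \eta' \, \rd t
\]
converges absolutely in $C^\infty$. Cartan's formula combined with $\rd \eta' = 0$ gives $\cL_V \eta' = \rd \iota_V \eta'$, so pulling $\rd$ outside the integral,
\[
\rd a \;=\; -\int_0^\infty \Phi_t^{\,*} \cL_V \eta' \, \rd t \;=\; -\int_0^\infty \tfrac{\rd}{\rd t}\Phi_t^{\,*} \eta' \, \rd t \;=\; \eta' - \lim_{T\to\infty} \Phi_T^{\,*} \eta' \;=\; \eta',
\]
the final limit vanishing by the exponential decay of $\eta'$. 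The elementary estimate $\int_r^\infty e^{-u}\,\rd u = e^{-r}$ then shows that $a$ decays exponentially, and commuting base and $\ve$-derivatives through the integral propagates this decay to all derivatives.

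The main obstacle is obtaining these decay estimates uniformly in $\ve$ down to $\ve = 0$. Since $\mu\ve$ enters both $g_{\AH,\ve}$ and its asymptotic model $g'_\ve$, one must verify that $\eta'$ and all of its $\ve$-derivatives remain exponentially decaying uniformly in $\ve$; this follows from the smooth dependence of the AH family on the parameter $\mu\ve$ together with the uniform ODE estimates on the coefficient functions $a, b, c$. Dominated convergence then transfers smoothness and uniform exponential decay of $\eta'$ in $\ve$ to the primitive $a$ constructed above, completing the argument.
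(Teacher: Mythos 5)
Your argument is correct and follows the route the paper itself intends but does not spell out: the paper labels Proposition~\ref{prim_2} as ``parallel to Corollary~\ref{prim_1}'' and leaves the proof implicit, and Corollary~\ref{prim_1}/Proposition~\ref{primitive2} rest on exactly the kind of quantitative Poincar\'e lemma via radial retraction (now to infinity rather than to the origin) that you construct through the flow-integral homotopy operator. The one point worth making explicit is the choice of lift of the radial dilation to the total space of $\psi$ (the horizontal lift with respect to $\alpha'$ suffices), together with the observation that the Jacobian factors $\sim e^{2t}$ from pulling back the $2$-form are dominated by the super-exponential decay $e^{-c e^t |x'|}$ of $\eta'$ along the flow, which you use implicitly when asserting convergence.
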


\section{ALF spaces and manifolds with fibred boundary}

\label{phi_sec}

Let $X^n$ be a compact manifold with boundary.  For simplicity assume
$\p X$ is connected; otherwise the following discussion applies to
each of its connected components.  Recall that a {\em boundary
  defining function} (bdf) for $\p X$ in $X$ is a smooth function
$\rho\geq 0$, such that
\bee\label{e2.29.8.20}
\p X = \{p \in X: \rho(p) =0\}\mbox{ and }\rd\rho(p)\neq 0\mbox{ for
  all }p\in \p X.
\eee
The space of all
smooth vector fields on $X$ which are tangent to $\p X$ is denoted by 
$\cV_{\bo}(X)$.   If $p \in \p X$ then we may choose
local
coordinates in an open neighbourhood $U$ of $p$ in $X$ by taking local
coordinates $(y_1,\ldots,y_{n-1})$ centred at $p$ in $\p X$ and
adjoining the restriction to $U$ of $\rho$.  Any element of
$\cV_{\bo}(U)$ is then a smooth linear combination of the vector
fields
\bee\label{e3.29.8.20}
\rho \frac{\p}{\p\rho}, \frac{\p}{\p y_1},\ldots,\frac{\p}{\p
  y_{n-1}}.
\eee
(Warning: even though $\rho$ is globally defined on $X$, $\rho
\p_\rho$ is only defined when $\rho$ is used as here, as one of a
system of local coordinates.)
Either from this explicit description or by invoking the Serre--Swan
theorem, one sees that there is a vector bundle $T_{\bo}X$, the {\em
$\bo$-tangent bundle}, with the defining property that
$\cV_{\bo}(X)= C^\infty(X,T_{\bo}X)$, the full space of sections of
$T_{\bo}X$.   The study of $\bo$-geometry (and analysis) is the study
of metrics and differential operators defined on $X$, but using
$T_{\bo}X$ instead of $TX$, see \cite{GreenBook}.   Working in local
coordinates and making the change of variables $t = - \log \rho$ shows
that $\bo$-geometry on $X$ is 
(essentially) the same as asymptotically cylindrical geometry on the
interior of $X$.

\subsection{Fibred boundaries}
\label{phisect}
To capture ALF geometry in a similar fashion, 
we need to assume that $X$ has a {\em fibred boundary}.  For a
systematic account, see \cite{MM_FB}.

\begin{dfn}  Let $X^n$ be a compact manifold with connected boundary. We
  say that $X$ has fibred boundary, or $\phi$-structure, if $\p X$ is the
  total space of a smooth submersion $\phi$ from $\p X$ onto $Y$,  the
  latter being a  compact manifold without boundary.
\label{d1.31.8.20}\end{dfn}

Since we assumed $\p X$ is connected and compact, $Y$ must also be connected, and
it follows that the fibres $\phi^{-1}(y)$ are all mutually diffeomorphic.

\begin{dfn}\label{d2.31.8.20}
For any smooth fibration (i.e. surjective submersion)  $\phi: W \to
Y$, denote by $T(W/Y)$ the {\em vertical subbundle} of vectors tangent
to the fibres of $\phi$.  Let $X$ be a manifold with fibred boundary
as in the preceding definition and fix a boundary defining function $\rho$.
The space of $\phi$-vector fields, denoted $\cV_{\phi}(X)$ is the
subspace of $\bo$-vector fields $v$ such that
\bee\label{e1.31.8.20}
v|\p X \in C^\infty(\p X, T(\p X/Y))\mbox{ and }v(\rho) \in \rho^2
C^\infty(X).
\eee
\end{dfn}

Observe that the first of the conditions \eqref{e1.31.8.20} depends
only on the fibration of $\p X$, but the second depends on the choice
of $\rho$.  Nonetheless, with the $1$-jet of $\rho$ fixed at $\p X$,
there is again a smooth vector bundle, to be called the $\phi$-tangent
bundle $T_\phi X$, with the defining property $\cV_\phi(X)
=C^\infty(X,T_\phi X)$.

In order to give a local basis for $T_\phi X$, pick a point
$p \in \p X$ and choose local  coordinates
$(y_1,\ldots,y_b,z_1,\ldots, z_f)$ on $\p X$ centred at $p$ and adapted to
$\phi$ in the sense that
\bee
\phi(y_1,\ldots,y_b,z_1,\ldots,z_f) = (y_1,\ldots,y_b).
\eee
By adjoining $\rho$ we get a system of local coordinates in an open
subset $U$ of $X$.  Then the space $\cV_{\phi}(U)$
of $\phi$-vector fields in $U$ will be the space of smooth linear
combinations of the vector fields
\begin{equation}\label{e31.2.4.20}
  \rho^2\frac{\p}{\p \rho},\, \rho\frac{\p}{\p y_1},\ldots,
  \rho\frac{\p}{\p y_b},  \frac{\p}{\p z_1},\ldots, \frac{\p}{\p z_f}.
\end{equation}

\begin{dfn}
  Given a manifold $X$ with fibred boundary as above, a 
$\phi$-metric is a smooth metric on $T_{\phi} X$ which takes the
special form
\bee\label{e1.30.8.20}
g_\phi = \frac{\rd \rho^2}{\rho^4} + \frac{\phi^*h_1}{\rho^2} + h_2
\eee
in a collar neighbourhood $\p X^{+}$ of $\p X$: here
$h_1$ is a symmetric $2$-tensor on $Y\times [0,\rho_0)$,
positive-definite at $\rho=0$, and $h_2$ is a symmetric $2$-tensor
which is positive-definite on the vertical tangent bundle $T(\p
X/Y)$.
\end{dfn}

To see what kind of asymptotic geometry a $\phi$-metric captures, let
$X$ be a manifold with fibred boundary as above and let $C(Y) =
(0,\infty)_r \times Y$ be the cone over $Y$. Let
$\p X^+$ denote the pull-back of $\p X \to Y$ over $C(Y)$. If we
write $h_1|Y = H_1$ and $h_2|\p X = H_2$, then with $r
= 1/\rho$ in \eqref{e1.30.8.20},
\bee\label{e1.12.10.20}
g_\phi = \rd r^2 + r^2 \phi^*H_1 + H_2 + O(1/r) \mbox{ for }r\to \infty.
\eee
Thus the restriction of $g_\phi$ to $X\setminus \p X$ is a metric asymptotic to the metric
\bee
\label{e1a.12.10.20}
g_\phi^+ = \rd r^2 + r^2 \phi^*H_1 + H_2 
\eee
on $\p X^+$.  The first two terms give a cone metric on the base
$C(Y)$, while the $H_2$-term gives a smooth family of metrics on the
fibres $\phi^{-1}(y)$, which however do not suffer any rescaling with
$r$: the fibres remain the same size
and shape as $r\to \infty$ along any given generator of $C(Y)$.  In
the special case that $Y = 
S^3$, $C(Y) = \RR^3\setminus 0$ and $\p X \to Y$ is a circle-bundle,
we recover $A$-type ALF geometry in four dimensions, and $D$-type ALF
geometry just corresponds to the quotient of this by an involution.

\subsection{Compactification of spaces with Gibbons--Hawking asymptotics}
\label{s1.26.11.20}
Suppose that $h(x)$ is positive and harmonic in the set
$U = \{\delta|x| > 1\} \subset \RR^3$ (for some $\delta>0$) and that
\bee\label{e4.31.8.20}
h(x) = 1+ \frac{\ell}{2|x|} + O(|x|^{-2}) \mbox{ in }U,
\eee
where $\ell \in \ZZ$.  
Denote by $\psi : V \to U$ the circle-bundle of degree $-\ell$ over
$U$ so that the Gibbons--Hawking Ansatz
\bee\label{e11.15.10.20}
g = h(x)|\rd x|^2 + h(x)^{-1}\alpha^2,\;\; \rd \alpha = * \rd h
\eee
defines an ALF metric on $V$.

\begin{prop} \label{prop_alf_extension}
Let $g$ be as in \eqref{e11.15.10.20}.  Then there is a partial
compactification
\bee\label{e12.15.10.20}
\ol{\psi} : \ol{V} \to  \ol{U}
\eee
of $\psi$ and a smooth connection-form $\ol{\alpha}$ on
$\ol{V}$ (viewed as a circle-principal bundle over $\ol{U}$) for
which $\alpha =\ol{\alpha}|U$ satisfies the second of
\eqref{e11.15.10.20} and $g$ is the restriction to $V$ of a smooth
$\phi$-metric $g_{\phi}$ on $\ol{V}$.
\end{prop}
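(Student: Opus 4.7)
The plan is to construct the compactification $\ol U$ of $U$ at infinity, extend the circle bundle $\psi$ and its connection $\alpha$ across the new boundary, and then check that the metric takes the $\phi$-metric form. For the first step, introduce $\rho = 1/|x|$ and $\omega = x/|x| \in S^2$ on $U$, and adjoin a $2$-sphere at infinity to form $\ol U$, a manifold with boundary $\p\ol U = \{\rho = 0\} \simeq S^2$, in the smooth structure for which $\rho$ is a boundary defining function. Since $h - 1$ is a bounded harmonic function on $\{|x| > 1/\delta\}$ decaying at infinity, its expansion in decaying spherical harmonics converges and gives a real-analytic (hence smooth) extension of $h$ to $\ol U$ with $h|_{\p\ol U} = 1$ and $h = 1 + (\ell/2)\rho + O(\rho^2)$. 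Consequently $F = *\,\rd h$ extends smoothly to $\ol U$ with $F|_{\p\ol U} = -(\ell/2)\,\mathrm{vol}_{S^2}$, confirming that $\psi$ has first Chern number $\frac{1}{2\pi}\int_{\p\ol U} F = -\ell$.

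Second, extend the bundle and connection. Since $\ol U$ is homotopy equivalent to $S^2$, there is---up to isomorphism---a unique smooth principal $U(1)$-bundle $\ol\psi : \ol V \to \ol U$ of Chern number $-\ell$; fix one such extension of $\psi$. To extend $\alpha$, cover $\p\ol U$ by two contractible open sets $U_\pm$ (complements of opposite poles), trivialize $\ol V|_{U_\pm}$ by smooth local sections with angular fibre coordinates $\theta_\pm$, and write $\ol\alpha = \rd\theta_\pm + A_\pm$ for a smooth $1$-form $A_\pm$ on $U_\pm$ with $\rd A_\pm = F|_{U_\pm}$. Because $F$ is smooth on $\ol U$, such $A_\pm$ exists (built by radial integration on each contractible patch, or by combining an explicit Dirac potential for the leading piece $-(\ell/2)\,\mathrm{vol}_{S^2}$ with a Poincar\'e-lemma primitive of the smooth remainder); a gauge transformation arranges $\ol\alpha|_V = \alpha$.

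Third, compute the metric in coordinates $(\rho, \omega, \theta)$. Since $|\rd x|^2 = \rd\rho^2/\rho^4 + g_{S^2}/\rho^2$, we obtain
\[
g = h\,\frac{\rd\rho^2}{\rho^4} + h\,\frac{\phi^* g_{S^2}}{\rho^2} + h^{-1}\,\ol\alpha^2,
\]
a smooth positive-definite section of $T^*_\phi \ol V \otimes T^*_\phi \ol V$: the coefficients $h,\,h,\,h^{-1}$ are smooth on $\ol U$ with positive limits at $\p\ol U$, and $\ol\alpha$ is smooth. A redefinition of the boundary defining function that absorbs the leading $h$-factor recasts this in the precise model form of the $\phi$-metric definition, with $h_1|_{\p\ol U} = g_{S^2}$ and $h_2|_{\p\ol V}$ the unit fibre metric.

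The main obstacle is ensuring \emph{smoothness}---as opposed to mere polyhomogeneity---of the extensions across $\p\ol V$, both for $\ol\alpha$ and for the coefficient functions appearing in $g$. This rests on the fact that $h$ itself extends smoothly in $\rho = 1/|x|$, a consequence of the classical exterior spherical harmonic expansion; in more singular asymptotic settings where the expansion of $h$ contains logarithmic terms, one would obtain only a polyhomogeneous $\phi$-structure.
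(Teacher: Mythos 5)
Your proof is correct and follows essentially the same route as the paper's: radial compactification of $U$, smooth extension of $h$ (and hence $h^{-1}$ and $*\rd h$) to $\ol{U}$ via its exterior multipole expansion, topological extension of the degree-$(-\ell)$ circle bundle, extension of the connection, and a coordinate check of the $\phi$-metric form. The only cosmetic difference is in extending $\alpha$: the paper pulls back the standard degree-$(-\ell)$ connection on $\Sigma\to S^2$ and corrects by a globally defined smooth $1$-form $\alpha_1\in\Omega^1(\ol{U})$ with $\rd\alpha_1 = *\rd(h - \ell/(2|x|))$, whereas you build local potentials on two contractible patches and match by gauge — a routine variant of the same argument.
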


\begin{proof}
The partial compactification $\ol{U}$ is defined to be the radial
compactification of $U$.  Thus
\bee
\ol{U} = S^{2} \times [0,\delta)_\rho,
\eee
where $U$ is identified with the interior of $\ol{U}$ by inverted
polar coordinates, $x\longmapsto (
x/|x|, 1/|x|)  \in S^{2}\times (0,\delta)$.   Then $\ol{U}$ is a (topological) manifold
with boundary and it has a unique smooth structure extending that of
$U$  with respect to which $\rho$ is a smooth boundary defining
function for $\p\ol{U}$.  More informally, $\ol{U}$ is obtained from
$U$ by adjoining the `sphere at infinity' of $\RR^3$. 
(Radial compactification is discussed in
detail in \cite{scat}.)

Since $h$ is harmonic in $\{\delta|x|>1\}$, it has a multipole
expansion, each term of which is a homogenous harmonic function of
$\RR^3\setminus \{ 0\}$ of non-positive degree.  Each such multipole term
extends smoothly to $\ol{U}$ and it follows that the coefficients $h$
and $h^{-1}$ in 
\eqref{e4.31.8.20} extend smoothly to $\ol{U}$.   As in
\eqref{e1.12.10.20} in inverted
polar coordinates $x = y/\rho$, where $|y|=1$, 
$$
|\rd x|^2 = \frac{\rd \rho^2}{\rho^4} + \frac{|\rd y|^2}{\rho^2},
$$
from which it follows that the first term in $g$ is the restriction
of the `base part' of a $\phi$-metric on $\ol{U}$.

To construct $\ol{\psi}: \ol{V} \to \ol{U}$, an $S^1$-bundle of degree
$-\ell$ over $\ol{U}$, we simply take the $S^1$-bundle $\Sigma \to
S^2$ of degree $-\ell$ and define $\ol{V} = p_1^*\Sigma$,
where $p_1 : \ol{U} \to S^2$ is the
projection on the first factor.  This gives $\ol{\psi} : \ol{V} \to
\ol{U}$ as required.  Viewing $\Sigma$ as a principal
$S^1$-bundle, it has a standard smooth connection whose curvature
is
$(-\ell/2)$ times the 
round area-form on $S^2$. Denote by
$\alpha_0$ the pull-back of this connection to $\ol{V}$. Then
$\alpha_0$ is smooth on $\ol{V}$ and 
\bee
\rd \alpha_0 = *\rd\left(1 + \frac{\ell}{2|x|}\right)\mbox{ in }V.
\eee
We now define $\ol{\alpha} = \alpha_0 + \alpha_1$, where $\alpha_1$ is
a smooth form $\ol{U}$ satisfying
\bee\label{e5.31.8.20}
\rd \alpha_1 = *\rd
\left(h -\frac{\ell}{2|x|}\right)
\eee
in $U$, so that $\ol{\alpha}|U$ solves the second of
\eqref{e11.15.10.20}.  By working term-by-term with the multipole
expansion of $h$ (or by using a retraction to the boundary), it is not
hard to see that this can be solved for $\alpha_1 \in
\Omega^1(\ol{U})$, as required.  Then $\ol{\alpha} = \alpha_0 +
\alpha_1$ is smooth on $\ol{V}$ and its restriction $\alpha =
\ol{\alpha}|V$ satisfies the second of \eqref{e11.15.10.20}.  In this
way, we see that $g$ is indeed the restriction of a smooth
$\phi$-metric on $\ol{V}$.
\end{proof}

The Atiyah--Hitchin metric has Gibbons--Hawking asymptotics, modulo
terms that are {\em exponentially small} at infinity
\eqref{AHmetricasy}.  Because of this, it is convenient to have the
following definition:

\begin{dfn}  Let $X$ be a $4$-manifold with fibred boundary, and let
  $g_\phi$ be a $\phi$-metric on $X$.  We say that $g_{\phi}$ is {\em
    strongly  ALF} if there is a (compactified) ALF Gibbons--Hawking
  metric $g_{0,\phi}$, say, in a neighbourhood $\ol{V}$ of $\p X$
  (cf.\ Prop.~\ref{prop_alf_extension}) such that $g_\phi - g_{0,\phi}
  = O(\rho^{\infty}))$ near $\p X$.  In this definition we allow the
  base of the fibration of $\p X$ to be $S^2$ or $S^2/\{\pm 1\}$, the
  latter being needed as always for the $D_k$ ALF geometry.
  \label{strongALF}\end{dfn}


\subsection{Real blow-up}
\label{s_buhalf}
In this section we give a quick explicit description of the real
blow-up of the origin in $\RR^n\times [0,\ve_0)$, since this is needed
for the construction of our spaces $\wt{\cW}$ and $\cW$.
For a systematic account of real blow-up, two
references among many are \cite{gpaction,daomwc}.  

The real blow-up
\bee
B = [ \RR^n \times [0,\ve_0); (0,0)]
\eee
of $\RR^n\times [0,\ve_0)$ in the origin may be defined as the
quotient of
\bee
B' := \{ (Z,R,S) \in \RR^n \times [0,\infty)^2 : (Z,R)\neq (0,0),\; RS  < \ve_0 \}
\eee
by the $\RR_+$-action
\bee
t\cdot (Z,R,S) = (tZ, tR, t^{-1}S).
\eee
Denote by $[Z,R,S]$ the $\RR_+$-orbit of $(Z,R,S) \in B'$---we think
of $[Z,R,S]$ as homogeneous coordinates on $B$.
The condition $RS < \ve_0$ descends to the quotient and we see that $B$ is a manifold with corners 
up to codimension $2$, with boundary hypersurfaces 
\bee 
F_0 = \ff(B) =\{S=0\},\;\; Y = \{R=0\}
\eee 
meeting in the corner $\{R=S=0\}$.  The blow-down map is given by the
formula 
\bee
\beta : B \longrightarrow \RR^n\times[0,\ve_0),\;\;
\beta : (Z,R,S) \mapsto (SZ, SR)
\eee
in homogeneous coordinates. 
From its definition, $\beta$ is a
diffeomorphism from $B\setminus F_0$ onto $\RR^3\times
[0,\ve_0)\setminus (0,0)$, but $\beta$ crushes $F_0$ to $(0,0)$ (Figure~\ref{bu_diag}).

As the notation suggests, $F_0$ is the front face (exceptional
divisor) of the blow-up, while $Y$ is the (lift of the) `old'
boundary, in other words the lift to $B$ of $\{\ve =0\}\subset \RR^3
\times [0,\ve_0)$. 

The map
\bee\label{e11.29.10.20}
\RR^n\times [0,\ve_0) \ni (x',\ve') \longmapsto [x', 1,\ve'] \in
B\setminus Y
\eee
is a diffeomorphism, and so we have local coordinates on the open
subset $B\setminus Y$ of $B$. In these coordinates, $\beta(x',\ve') =
(\ve'x', \ve')$.   Similarly, the map
\bee\label{e12.29.10.20}
\RR^n\times [0,\ve_0) \ni (x,\ve) \longmapsto [x, \ve,1] \in
B\setminus F_0
\eee
is a diffeomorphism, and so we have local coordinates $(x,\ve)$ on
$B\setminus F_0$ with respect to which $\beta(x,\ve) = x$. These two
charts cover everything but the corner $Y\cap F_0$ of $B$.  Here we
have a neighbourhood
$S^{n-1} \times H$, where 
\bee
H = \{(\rho,\sigma) \in [0,\infty)^2 : \rho\sigma < \ve_0\}
\eee
and a diffeomorphism
\bee
S^{n-1} \times H \ni (y,\rho,\sigma) \longrightarrow [y,\rho,\sigma]
\in B\setminus \{z=0\}.
\eee
In these coordinates, $\beta(y,\rho,\sigma) = (\sigma y, \rho\sigma)$  and
$\rho$ and $\sigma$ are local defining functions for $Y$ and $F_0$
respectively.

The relationships between these different coordinate charts are as follows:
\bee
y = \frac{x}{|x|} = \frac{x'}{|x'|},\; \rho = \frac{1}{|x'|} =
\frac{\ve}{|x|},\; \sigma = \ve'|x'| = |x|.
\eee\begin{figure}[h]
\centering
\begin{tikzpicture}[>=stealth,scale=1.5]
\filldraw[lightgray] 
(-3,2) -- (-3,0) -- (3,0) -- (3,2)-- cycle;
\filldraw[white] (1,0) arc (0:180:1) --cycle;
\draw[thick,black]
(3,0) -- (1,0) arc (0:180:1) -- (-3,0);
\draw (-2,0) node[below] {$Y$};
\draw (-0.8,0.3) node[right] {$F_0=\ff(B)$};
%
%
\draw[very thick,->] (1,0) -- (1,0.6);
\draw[very thick,->] (1,0) -- (1.6,0);
\draw(1,.3) node[right] {$\rho$};
\draw(1.3,-0.0) node[below] {$\sigma$};
\draw[very thick,->] (2,0) -- (2,0.6);
\draw[very thick,->] (2,0) -- (2.6,0);
\draw(2.3,0.0) node[below] {$x$};
\draw(2,0.3) node[right] {$\ve$};
%
%
\draw[very thick,->] (0,1) -- (0.6,1);
\draw(.3,1) node[above] {$x'$};
\draw[very thick,->] (0,1) -- (0,1.6);
\draw(0,1.3) node[left] {$\ve'$};
\end{tikzpicture}
\caption{The blow-up $B$ of $\RR^n\times [0,\ve_0)$}
\label{bu_diag}
\end{figure}

We observe that $F_0$ is canonically identifiable with the radial
compactification of $\RR^n$ (really the radial compactification
of $T_0\RR^n$) and that $Y = [\RR^n;0]$, the real blow-up of $\RR^n$
at the origin.

In addition to the blow-down map $\beta$, we have the lift to $B$ of
the projection map $\RR^n\times [0,\ve_0) \to [0,\ve_0)$.  In terms of
the homogeneous coordinates, 
\bee
\pi[Z,R,S] = RS
\eee
and in the above local coordinates,
\bee
\pi(x',\ve') = \ve',  \pi(y,\rho,\sigma) = \rho\sigma, \pi(x,\ve) =
\ve.
\eee

Using the above coordinate systems, the following lifting results are easy to
verify.
\begin{prop}\label{nu_prop}
The map $(x,\ve) \mapsto x/\ve$, defined on $\RR^n\times (0,\ve_0)$,
extends to define a smooth mapping
  \bee
r: B \to F_0,\; r[Z,R,S] = [Z,R,0].
  \eee
Similarly, if $f(\ve,x) = \ve/|x|$, then $\beta^*f$ extends to define
a smooth function on $B\setminus \Sigma$, where 
$\Sigma = \{[0,1,\ve']: \ve'\geq 0\}$ is the lift of $x=0$ to $B$.  $\beta^*f$ has
the simple singularity $1/|x'|$ near $\Sigma$ and vanishes identically
on $Y$.
\label{plift1}
  \end{prop}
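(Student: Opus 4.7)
The plan is to work entirely in the homogeneous coordinates $[Z,R,S]$ on $B$, where both claimed formulas become manifestly algebraic, and then to verify the local behavior by passing to each of the three coordinate charts $(x,\ve)$, $(x',\ve')$, $(y,\rho,\sigma)$ introduced above.

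For the first part, I would define $r$ on the cover $B'$ by $(Z,R,S)\mapsto (Z,R,0)$. Under the $\RR_+$-action $t\cdot(Z,R,S)=(tZ,tR,t^{-1}S)$, the image transforms as $(tZ,tR,0)$, which is another representative of the same orbit (the $S$-component $0$ is fixed by any $t^{-1}$). Hence $r$ descends to a well-defined map $B\to F_0=\{S=0\}$, and smoothness on $B'$ (it is a linear projection) together with smoothness of the quotient map gives smoothness of $r$ on $B$. To check it extends $(x,\ve)\mapsto x/\ve$, use the chart $B\setminus F_0$: here $r[x,\ve,1]=[x,\ve,0]$, and for $\ve>0$ we rescale by $1/\ve$ to obtain $[x/\ve,1,0]$, which under the identification $F_0\simeq \overline{\RR^n}$ is the point $x/\ve$. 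In the chart $B\setminus Y$ the map is simply $(x',\ve')\mapsto x'\in F_0$, and in the corner chart it is $(y,\rho,\sigma)\mapsto [y,\rho,0]$, so smoothness across $Y\cap F_0$ is also visible.

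For the second part, I would compute the pullback directly: since $\beta[Z,R,S]=(SZ,SR)$, we have
\begin{equation}
\beta^*f([Z,R,S])=\frac{SR}{|SZ|}=\frac{R}{|Z|},
\end{equation}
a formula manifestly invariant under the $\RR_+$-action and hence well-defined on $B$ wherever $|Z|>0$. The locus $\{Z=0\}\subset B$ consists of orbits $[0,R,S]$ with $R>0$ (since $(Z,R)\neq(0,0)$); normalizing $R=1$ shows $\{Z=0\}=\{[0,1,\ve']:\ve'\in[0,\ve_0)\}=\Sigma$. Thus $\beta^*f$ is smooth on $B\setminus\Sigma$. The behavior near $\Sigma$ and $Y$ is then read off chart-by-chart: in $B\setminus Y$ we have $(x',\ve')\leftrightarrow[x',1,\ve']$, giving $\beta^*f=1/|x'|$, so the singularity along $\Sigma=\{x'=0\}$ is precisely the simple singularity $1/|x'|$; in $B\setminus F_0$ we have $\beta^*f=\ve/|x|$, smooth where $x\neq 0$ and vanishing on $Y=\{\ve=0\}$; in the corner chart we get $\beta^*f=\rho/|y|=\rho$, which is smooth and vanishes on $Y=\{\rho=0\}$.

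There is no real obstacle here: both statements reduce to the observation that $r$ and $\beta^*f$ admit $\RR_+$-homogeneous algebraic expressions on $B'$. The only minor point requiring attention is the identification of $\Sigma$ with $\{Z=0\}\subset B$, which I would confirm by tracking $\{x=0,\,\ve>0\}\subset B\setminus F_0$ through the charts to the corner and front face. Everything else is a routine coordinate verification.
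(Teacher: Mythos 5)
Your proof is correct and, since the paper offers no argument beyond the remark that these lifting results are ``easy to verify'' using the given coordinate systems, your chart-by-chart verification is essentially the intended proof; the preliminary step of writing $r$ and $\beta^*f$ as manifestly $\RR_+$-homogeneous expressions on the cover $B'$ (namely $(Z,R,S)\mapsto(Z,R,0)$ and $R/|Z|$) is a clean way to organize the computation, but yields the same conclusions as the direct coordinate checks.
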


\begin{prop} Let $(x_1,\ldots,x_n)$ be euclidean coordinates on
  $\RR^n$.  The vector field $\ve \p/\p x_j$ lifts to define a smooth
  vector field on $B$, described as follows: away from
  $Y$ the lift is just $\p/\p x'_j$; away from $F_0$  it is $\ve \p/\p
  x_j$; and near the corner $F_0\cap Y$ it is
a smooth linear combination of the vector fields
\bee
\rho^2\frac{\p}{\p \rho} - \rho\sigma\frac{\p}{\p \sigma} \mbox{ and }
\rho\frac{\p}{\p y}, 
\eee
where $\p/\p y$ is shorthand for a smooth vector field on $S^{n-1}$.
\label{plift2}
\end{prop}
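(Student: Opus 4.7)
The proof is a direct calculation in the three coordinate charts listed after Proposition~\ref{plift1}, using the formulas for $\beta$ in each chart. The claim away from $Y$ and away from $F_0$ is nearly tautological, so the real content is the behaviour near the corner $Y \cap F_0$.

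First, on $B\setminus F_0$ the blow-down map is the identity in the coordinates $(x,\ve)$, cf.\ \eqref{e12.29.10.20}, so the lift of $\ve\,\p/\p x_j$ is literally $\ve\,\p/\p x_j$. On $B\setminus Y$ we use the coordinates $(x',\ve')$ of \eqref{e11.29.10.20}, with $\beta(x',\ve')=(\ve'x',\ve')$. Pushing forward, $\beta_*(\p/\p x'_j) = \ve'\,\p/\p x_j = \ve\,\p/\p x_j$ on the image, so the lift is $\p/\p x'_j$, as claimed.

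Near the corner, use coordinates $(y,\rho,\sigma)$ with $\beta(y,\rho,\sigma)=(\sigma y,\rho\sigma)$. A direct calculation of $d\beta$ gives
\begin{align*}
d\beta\bigl(\rho^{2}\tfrac{\p}{\p\rho} - \rho\sigma\tfrac{\p}{\p\sigma}\bigr) &= -\rho\sigma\sum_{i} y_i \tfrac{\p}{\p x_i} = -\ve\,\partial_{\mathrm{rad}},\\
d\beta\bigl(\rho\,\tfrac{\p}{\p y}\bigr) &= \rho\sigma\sum_{i}\tfrac{\p y_i}{\p y}\tfrac{\p}{\p x_i} = \ve\,\partial_{\mathrm{tan}},
\end{align*}
where $\partial_{\mathrm{rad}}=\sum y_i\p/\p x_i$ is the euclidean radial field and $\partial_{\mathrm{tan}}$ runs over vector fields tangent to the spheres $|x|=\mathrm{const}$. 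Since $\p/\p x_j = y_j\,\partial_{\mathrm{rad}} + (\mbox{tangential})$, multiplying by $\ve$ and inverting $d\beta$ shows that the lift of $\ve\,\p/\p x_j$ is $-y_j\bigl(\rho^2\p/\p\rho - \rho\sigma\,\p/\p\sigma\bigr)$ plus a smooth combination of fields of the form $\rho\,\p/\p y$, with coefficients that are smooth functions of $y\in S^{n-1}$. This is the desired form.

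Finally one checks consistency on the overlap of the charts, which amounts to verifying that in the transition $y=x/|x|$, $\rho=\ve/|x|$, $\sigma=|x|$ the two local expressions agree where both are defined; this is immediate from the chain rule. The only mildly subtle point is to recognize the correct combination $\rho^2\p/\p\rho - \rho\sigma\p/\p\sigma$, which is forced on us by the need to kill the spurious $\p/\p\ve$ component produced by $d\beta(\rho^2\p/\p\rho)$; once this cancellation is observed the rest of the calculation is routine.
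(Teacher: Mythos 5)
Your proof is correct and is exactly the kind of direct coordinate computation the paper has in mind (the paper itself only asserts the result as ``easy to verify'' from the coordinate charts of \S\ref{s_buhalf}, and you supply the verification): the key cancellation of the $\p/\p\ve$ components in $\rho^2\sigma\,\p/\p\ve - \rho\sigma(\sum y_i\p/\p x_i + \rho\,\p/\p\ve)$ is the point, and your check that $\ve\p/\p x_j$ decomposes as $\ve y_j\partial_{\mathrm{rad}} + \rho\,(\mbox{tangential})$ with $\rho = \ve/\sigma$ is the right accounting. One could tighten the notation in the line $d\beta(\rho\,\p/\p y) = \rho\sigma\sum_i\tfrac{\p y_i}{\p y}\tfrac{\p}{\p x_i}$ (since $y$ is already the sphere coordinate, $\tfrac{\p y_i}{\p y}$ is informal), but the computation it abbreviates is correct.
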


\begin{rmk}  The {\em lift} of a vector field here is defined as the push-forward
  by the inverse
  of $\beta$ from $\RR^n\times [0,\ve_0)\setminus (0,0) \to B\setminus
F_0$.  In general a smooth vector field on $\RR^n\times [0,\ve_0)$ will have
a smooth lift to $B$ iff it vanishes at $(0,0)$; the Proposition shows
what the lift looks like in the case of a particular class of such
vector fields.  
\end{rmk}

\section{The `gluing space' $\cW$}

\label{s_gluing_space}

The construction of $\wt{\cW}$ proceeds in several steps.  This is the
major effort, for $\cW$
is just the quotient of $\wt{\cW}$ by an involution $\iota$.   We shall
construct $\wt{\cW}$ by gluing together two pieces, $\wt{\cW_0}$ and
$\wt{\cW_1}$. The first of these is essentially the product $\AHr \times
[0,\ve_0)$ while the second is a resolved version of the family of
$M_\ve$.

\subsection{Resolution of the adiabatic Gibbons--Hawking family}

To construct $\wt{\cW_1}$, we need to resolve the base $\RR^3\times
[0,\ve_0) \cap \{|x| >2\ve\}$ of the family of Gibbons--Hawking
manifolds $M_{\ve}$.   Let us write $I = [0,\ve_0)$ in what follows.

Pick $\delta \in (0,1/2)$.  In what follows one should think of this
as being small but fixed, whereas we allow $\ve$ to vary in
$[0,\delta^2)$. 
Define
\bee\label{e11.10.10.20}
\wt{\cB} = [\ol{\RR^3}\times I; (0,0),P\times \{0\}] \cap \{|x|> \ve/\delta
\}.
\eee
(Note
that we have passed to the radial compactification of $\RR^3$.)
The blow-ups in  \eqref{e11.10.10.20}  are independent of each other (so can be carried
out in any order) and
each blow-up is modelled by the explicit description given in
\S\ref{s_buhalf}.  Thus $\wt{\cB}$ is a $4$-manifold with corners
and we label the boundary hypersurfaces in
the obvious way: for $p$ in $P\cup \{0\}$, $F_p$ is the front face of the
blow-up of $(p,0)$, while the lift of
$\{\ve =0\}$ to $\wt{\cB}$ is denoted by $\wt{Y}$.
Then $\wt{Y} = [\ol{\RR^3}; P,0]$ is the real
blow-up of
the points $0,\pm p_1,\ldots, \pm p_k$ in $\ol{\RR^3}$.  

 For $p\neq 0$, $F_p$ is a copy of $\ol{\RR^3}$, with euclidean
coordinate $x'_p = (x-p)/\ve$.  Setting $x' = x/\ve$,  $F_0$ is the part
$\{\delta |x'| > 1\}\subset  \ol{\RR^3}$ of the front face of the blow-up at
$0$.  See Figures~\ref{pic_un} and \ref{pic_res}.

\begin{figure}[h]
\centering
\begin{tikzpicture}[>=stealth,scale=1.5]
\filldraw[lightgray] 
(-.5,.6) -- (-.5,0) -- (5,0) -- (5,0.6)-- cycle;
\draw[thick,black]
(-.5,.6) -- (-.5,0) -- 
(5,0) --(5,0.6);
\filldraw[white] (1,0) -- (1.2,1) -- (.8,1) --cycle;
\draw[->] (5.2,.25) -- (6, .25);
\draw[thick,black] (6.3,0) -- (6.3, .6);
\begin{scriptsize}
\filldraw (1.,0) circle(1pt) node[below] {$0$};
\filldraw[white] (1,0) circle(0.5pt);
\filldraw (3.,0) circle(1pt) node[below] {$p$};
\filldraw (4,0) circle(1pt) node[below] {$q$};
\draw[thick,black,dotted] (3,0.) --(3,0.6);
\draw[thick,black,dotted] (4,0.) --(4,0.6);
\filldraw[white] (4,0) circle(.5pt);
\filldraw[white] (3,0) circle(.5pt);
\draw (-.5,0.3) node[left] {$S^2 \times I$};
\draw (6.3,0.3) node[right] {$I$};
\filldraw (6.3,0) circle (0.5pt);
\draw (6.3,0.6) circle (0.5pt);

\draw (5.6,.25) node[below] {$\pi$}; 
\end{scriptsize}
\begin{tiny}
\draw (6.3,-0.05) node[right] {$\ve=0$};
\end{tiny}
\end{tikzpicture}
\caption{The space $\{|x| > 2\ve\}\subset \ol{\RR^3}\times
  [0,\ve_0)$.}
\label{pic_un}
\end{figure}

\begin{figure}[h]\label{resolve_h}
\centering
\begin{tikzpicture}[>=stealth,scale=1.5]
\filldraw[lightgray] 
(-.5,.6) -- (-.5,0) -- (5,0) -- (5,0.6)-- cycle;
\filldraw[white] (.75,0) arc (180:0:.25) --cycle;
\filldraw[white] (2.75,0) arc (180:0:.25) --cycle;
\filldraw[white] (3.75,0) arc (180:0:.25) --cycle;
\draw[thick,black]
(-.5,.6) -- (-.5,0) -- 
(.75,0) arc (180:0:.25) --
(2.75,0) arc (180:0:.25) --
(3.75,0) arc (180:0:.25) --
(5,0) --(5,0.6);
\filldraw[white] (1,0) -- (1.2,1) -- (.8,1) --cycle;
\draw[->] (5.2,.25) -- (6, .25);
\draw[thick,black] (6.3,0) -- (6.3, .6);
\draw[thick,black,dotted] (3,0.25) --(3,0.6);
\draw[thick,black,dotted] (4,0.25) --(4,0.6);
\begin{scriptsize}
\draw (2,0) node[below] {$\wt{Y}$};
\draw (1.,0) node {$F_0$};
\draw (3.,0) node {$F_p$};
\draw(4,.4) node[right] {$\Sigma_{q}$};
\draw(3,.4) node[right] {$\Sigma_{p}$};
\draw (4.,0) node {$F_q$};
\draw (-.5,0.3) node[left] {$S^2\times I$};
\draw (6.3,0.3) node[right] {$I$};
\filldraw (6.3,0) circle (0.5pt);
\draw (6.3,0.6) circle (0.5pt);
\filldraw[white] (6.3,0.6) circle (0.25pt);
\draw (5.6,.25) node[below] {$\pi$}; 
\end{scriptsize}
\begin{tiny}
\draw (6.3,-0.05) node[right] {$\ve=0$};
\end{tiny}
\end{tikzpicture}
\caption{Schematic picture of the resolved base $\wt{\cB}$.}\label{pic_res}
\end{figure}

Recall from \S\ref{AGH} the fibred structure $\phi: M_{\ve} \to
\{|x|>2\ve\}$ of the manifold on which the metric $g_{\ve}$ lives.  This extends
smoothly to $\ol{M}_{\ve}$, the space obtained by adjoining a
boundary at spatial infinity (cf.\
Proposition~\ref{prop_alf_extension}).  Pulling $\phi$ 
back by the obvious map $\wt{\cB} \to  \ol{\RR^3}$ (composite of blow-down
and projection on the first factor) we obtain a $5$-manifold
$\wt{\cW_1}$ with 
$S^1$-action such that $\wt{\cW}_1/S^1=\wt{\cB}$.  We abuse notation
by denoting this quotient map also by $\phi$.  The NUTs in
$\wt{\cW_1}$ become the 
$\phi^{-1}(\Sigma_p)$, for $p\in P$, and $\wt{\cW}_1 \setminus \bigcup_p
\phi^{-1}\Sigma_{p}$ is a circle-bundle over
$\wt{\cB}\setminus \bigcup_p \Sigma_p$. In addition to the fibration
$\phi$, there is a smooth map $\pi: \cW_1 \to I$ given by composing $\phi$
with the map $\cB \to I$ given by blow-down followed by projection of
the product $\ol{\RR^3}\times I$ to $I$.

The point of this is that the family of metrics $g_{\ve}$ will lift to
define a smooth metric $\bg_1$, say, on $T_{\phi}(\wt{\cW}_1/I)$, with
$$
\bg_1|\pi^{-1}(\ve) = g_{\ve}\mbox{ for }\ve >0,\;
\bg_1| \phi^{-1}(F_p) =g_{\TN}\mbox{ and }
\bg_1|\wt{X}_{\ad} = g_{\ad}.
$$ 
(Cf.\ Theorem~\ref{mainthm}).   We shall not prove this here: the
corresponding statement at the level of triples is given in
Lemma~\ref{l11.2.12.20}.  Instead,
let us complete the construction of $\wt{\cW}$,
`filling the hole' in $\phi^{-1}(F_0)$ with a family of
Atiyah--Hitchin manifolds.

\label{s1.28.11.20}
\begin{center}
\begin{figure}[h]
\begin{tikzpicture}[>=stealth, scale=1.5]
  \filldraw[lightgray, semitransparent, domain=.3333:2, smooth, variable = \x] plot 
  ({\x},{1/\x}) -- (2,0) -- (0,0) -- (0,3) -- (.3333,3); 
    \draw[domain=.1:2, smooth, variable = \x,thick,dotted] plot 
    ({\x},{.3/\x}); 
  \filldraw[gray, semitransparent, domain=.5:2, smooth, variable = \x] plot 
  ({\x},{1/\x}) -- (2,0) -- (0,0) -- (0,2);
  \draw[black,very thick] (2,0) --(0,0) -- (0,3);
\begin{tiny}
  \draw  (2,0)  node[below]
            {\scriptsize $\rho=\delta$};
                 \draw (.7,.7) node[right] {$\wt{\cV}_1$};
\draw (0,2) node[left] {$\sigma = \delta$};
\draw (0,0) node[below] {$(0,0)$};
\draw (1.3,.8) node[right] {$\rho\sigma = \ve_0$};
\end{tiny}
  \begin{scope}[xshift=4cm]
    \filldraw[lightgray, semitransparent] (0,0) -- (3,0) --
    (3,.5) -- (0.3,.5) -- (0,0);
        \filldraw[gray, semitransparent] (0,0) -- (1,0) --
        (1,.5) -- (0.3,.5) -- (0,0);
            \draw[very thick] (0,0) -- (3,0);
            \draw[thick, dotted] (0.085,0.15) -- (3,0.15);
\begin{tiny}
            \draw (0,0) node[below]
            {$\rho=0$};
\draw (.5,.2) node[above] {$\wt{\cV}_0$};
            \draw (-0.2,0.75) node[left]
            {$\{\delta\rho= \ve'\}$};
\draw[thin,->] (-.22,.7).. controls (-.1,.7) and (.1,.5) ..(0.18,.38);
                 \draw[very thin] (1,0) node[below]
                 {$\rho=\delta$};
                 \draw(1.5,.5) node[above] {$\ve' = \ve_0$};
\end{tiny}
            \end{scope}
\end{tikzpicture}
\caption{Gluing $\wt{\cW}_1$ (left) to $\wt{\cW}_0$ (right). The schematic
  picture shows the subsets $\wt{\cV}_1 \subset \wt{\cW}_1$ and
  $\wt{\cV}_0\subset \wt{\cW}_0$ in dark grey.  These are
matched up by the gluing map $\vc{\kappa}$.
The dotted lines represent $\pi_1^{-1}(\ve)$ and $\pi_0^{-1}(\ve)$.
  These are also matched up by $\vc{\kappa}$.}
\label{fig_glue}\end{figure}
\end{center}

So return to $\AHr$ with its asymptotic region a circle-bundle $\psi$
with base $\{\delta|x'| >1\}$. Define
\bee
\wt{\cW}_0 = \{(z',\ve') \in \AHr \times [0,\ve_0) : |\psi(z')| <
\delta/\ve'\},
\eee
where it is understood that the condition applies only to $z' \in
\psi^{-1}\{\delta|x'| >1\}$.  Denote by $\pi_0:\cW_0 \to I$ the
(restriction of) the projection to $I$. 

Define
\bee
\wt{\cU}_0 := \{(x',\ve') : 0 \leq \ve < \ve_0, \delta^{-1} < |x'| < \delta
\ve^{-1}\},\;\; \wt{\cV}_0 = \psi^{-1}\wt{\cU}_0
\eee
and
\bee
\wt{\cU}_1 =S^2 \times H \subset \cB\mbox{ where }
H = \{(\rho,\sigma) \in [0,\delta)^2 : \rho\sigma < \ve_0\}.
\eee
Put $\wt{\cV}_1 = \phi^{-1}\wt{\cU}_1$ so that $\wt{\cV}_1 \subset
\wt{\cW}_1$.  

Then
\bee\label{bs_def}
\bs : \wt{\cU}_1 \to \wt{\cU}_0,\;\;  \bs :(y,\rho,\sigma) \mapsto (x' = y/\rho,
\ve' = \rho\sigma)
\eee
is a diffeomorphism. For fixed $\ve>0$, $\bs$ restricts to the
scaling map $x' = x/\ve$ of \S\ref{improved} (Figure~\ref{fig_glue}).

As in \S\ref{improved}, $\bs$ is covered by a diffeomorphism
$\vc{\kappa}: \cV_1 \to \cV_0$.  We can now make the following
definition:

\begin{dfn}
Define $\wt{\cW}$ as the quotient of the disjoint union
\bee
\wt{\cW}_0 \amalg \wt{\cW}_1
\eee
by the identification $\vc{\kappa}$.  The involution $\iota$ lifts
from the pieces to define an involution on $\wt{\cW}$ and the
quotient $\wt{\cW}/\iota$ is denoted $\cW$.
\label{d1.29.11.20}\end{dfn}

\begin{rmk}\label{r1.2.12.20}  From the above construction and by a slight
abuse of notation we regard $\cW_1$ and $\cW_0$ as open subsets of
$\cW$, intersecting in $\cV_1$.
\end{rmk}

We claim that this is the manifold whose existence was claimed in
Proposition~\ref{list1}. 

\subsection{Proof of Proposition~\ref{list1}}

We check the claimed properties one by one.
\begin{enumerate}
\item[(i)] The maps $\pi_0: \cW_0 \to I$, $\pi_1 : \cW_1 \to I$
  defined above fit together under the gluing $\vc{\kappa}$, 
yielding 
a smooth proper map $\pi : \wt{\cW}
  \to I$.
\item[(ii)] This assertion about the boundary hypersurfaces of
  $\wt{\cW}$ is  clear from the construction: $\wt{X}_p =
  \phi^{-1}(F_p)$, $\wt{X}_{\ad} = \phi^{-1}(\wt{Y})$, $\wt{I}_\infty
  = \phi^{-1}(S^2 \times I)$, and $\wt{X}_0 = \AHr$ from the gluing of
  $\wt{\cW}_0$ to $\wt{\cW}_1$, restricted to $\phi^{-1}(F_0)\subset
  \wt{\cW}_1$ and 
  $\{\ve'=0\}\subset \wt{\cW}_0$.
\item[(iii)--(v)] are clear from the definition.
\item[(vi)] The involutions of $\cW_1$ and $\cW_0$ are intertwined by
  $\vc{\kappa}$, and so glue together to define an involution $\iota$
  of $\wt{\cW}$. 
\end{enumerate}

Setting $\cW = \wt{\cW}/\iota$, we obtain the `gluing space' on which
we shall construct our smooth families of hyperK\"ahler metrics in the
next sections.

  \section{Formal solution}
  \label{formalsol_sec}
From now on, $\cW$ will be as constructed in the previous section, and
the notation will be as there.   The goal of this section is the
construction of smooth families of approximate hyperK\"ahler triples on the fibres of
$\pi$ inside $\cW$, whose limits, at $\ve=0$ are the triples of
$g_{\AH}$ at $X_0$, $g_{\TN}$ at the other $X_\nu$ and
the  triple of $g_{\ad}$ at $X_{\ad}$.

The steps in this
construction are an initial approximation,
followed by an iterative argument that improves this approximation
order by order in $\ve$. 

We begin with some necessary technical preliminaries about fibrewise
symplectic and hyperK\"ahler triples on $\cW$, and then proceed to the
statement of the main result of this section.

\begin{notn} (Naming the parts of $\cW$.)
The boundary hypersurfaces of $\cW$ are
denoted by $X_\nu$, for $\nu=0,\ldots,k$, $X_{\ad}$ and
$I_\infty$, corresponding to the boundary hypersurfaces of $\wt{\cW}$.
Boundary defining functions for these hypersurfaces will
  be denoted $\sigma_\nu$ for $X_\nu$, $\rho$ for
  $X_{\ad}$ and $\sigma_I$ for $I_\infty$.   For small positive
  $\delta$, let $U_\nu = \{\sigma_\nu < \delta\}$ be a tubular
  neighbourhood of $X_\nu$ and $V = \{\rho < \delta\}$ be a tubular
  neighbourhood of $X_{\ad}$. 

We may and shall
  assume:
\begin{enumerate}
\item $\rho$ is circle-invariant and is equal to $1/|x'_p|$ near
  $X_{\nu}\cap X_{\ad}$;
\item $\sigma_\nu = |x-p_\nu|$ near $X_{\nu}\cap X_{\ad}$;
\item $\rho\sigma_0\cdots\sigma_k = \ve$;
\item   $\sigma_\nu = \ve$ in 
$\Omega_{\nu}: =U_\nu \setminus V$ and $\rho = \ve$ 
in an open set of the form $V\setminus \bigcup_{\nu} U_{\nu}$.
\end{enumerate}
Set
  $\sigma = \sigma_0\sigma_1\ldots \sigma_k\sigma_I$.

From the construction of $\cW$, for each $\nu$ there is a retraction
$r_\nu : U_{\nu} \to X_{\nu}$ (cf.\ Prop.~\ref{plift1}).   Near $X_{\nu}\cap X_{\ad}$, this is a
map of circle-bundles covering the rescaling map $(x,\ve) \mapsto
(x-p_\nu)/\ve$. 
  \label{n_bdf} \end{notn}

\subsection{Fibrewise symplectic and hyperK\"ahler triples on $\cW$}

\begin{notn} If $U\subset \cW$ is an open set,  denote by $\Omega^k_{\phi}(U)$ the space of smooth
  sections over $U$ of $\Lambda^kT^*_\phi(\cW/I)$.  The {\em relative
    differential} is denoted
\begin{equation}\label{e1.12.3.20}
  \rd_\pi : \Omega^k_{\phi}(U) \longrightarrow \Omega^{k+1}_{\phi}(U).
\end{equation}
Further, if $U$ meets $X_{\ad}$,  write   $\Omega^k_{\phi,\ei}(U)$ for the subspace of {\em
  essentially invariant} 
  forms.  This is the subspace of forms $\alpha$ such that
  $\cL_{\p_\theta}\alpha = O((\rho\sigma_I)^\infty)$.  Write
$\Omega^k_{\phi,\eb}(U)$ for the subspace of {\em essentially basic}
forms.  These are the essentially invariant forms $\alpha$ which also
satisfy $\iota_{\p_\theta}\alpha = O((\rho\sigma_I)^\infty)$. 
\end{notn}

\begin{rmk} Observe that if $\alpha$ is essentially invariant (resp.\
  essentially basic) then  $\rd_\pi\alpha$ is essentially invariant  (resp.\ essentially basic).
\end{rmk}

\begin{dfn} 
By a {\em symplectic triple} $\bom$ on an open subset $U$ of $\cW$ we shall always
mean a triple $(\bom_1,\bom_2,\bom_3)$ with $\bom_j \in
\Omega_\phi^2(U)$, such that $\rd_\pi \bom_j=0$ and such that the
$3\times 3$ matrix
\begin{equation}\label{e1.13.3.20}
(\omega_j\wedge \omega_k)\mbox{ is positive-definite at every point of }U.
\end{equation}
A symplectic triple on $U\subset \cW$ is called {\em hyperK\"ahler} 
if  $Q(\bom)=0$, where
\begin{equation}\label{e2.13.3.20}
  Q(\bom)_{jk} = \bom_j\wedge \bom_k -
  \frac{1}{3}(\bom_1^2+\bom_2^2 +\bom_3^2)\delta_{jk}.
  \end{equation}
\end{dfn}

\begin{rmk} While it would be more accurate to call the triples
  appearing in this definition {\em relative} symplectic or hyperK\"ahler
  triples, we believe that no serious confusion will result from this
  definition.  However, the reader should bear in mind that symplectic
  triples on $\cW$ are to be thought of informally as {\em
    $\ve$-dependent smooth families} of
  symplectic triples on the `Sen space' $\SEN_k$ with some rather
  strong control on their behaviour in the limit as $\ve\to0$.  As
  previously in this paper, we shall try to be consistent in our use
  of bold symbols for $\ve$-dependent families, viewed as data on the
  5-dimensional space $\cW$.
  
  The $3\times 3$ matrices appearing in \eqref{e1.13.3.20}
  and \eqref{e2.13.3.20} take values in the trivial real line-bundle
  $\lambda:=\Lambda^4 T^*_\phi(\cW/I)$. The condition that \eqref{e1.13.3.20} be
  positive-definite makes sense for one and hence any trivialization
  of this bundle.
\end{rmk}

\begin{notn}\label{notn1.22.3.20}  

 For $\nu=0,1,\ldots,k$, we shall denote by $\omega_\nu$ the
 hyperK\"ahler triple of $g_\nu$.  (We omit the straightforward proof
 that the triples discussed in \S\ref{s_primitives_gh} and
 \S\ref{sect_AH_epsilon} extend to define triples on the rescaled 
 tangent bundles $T_\phi X_{\nu}$ over the compactifications.)  We
 also denote by $\omega_{\ad}$ the hyperK\"ahler triple of the
 $g_{\ad}$ on $X_{\ad}$. 

\end{notn}

The main theorem to be proved in this section is the following:
\begin{thm} On $\cW$, there is a smooth symplectic triple $\bze$ 
satisfying
  \begin{equation}\label{e1.17.1.20}
Q(\bze) \in \dot{C}^\infty(\cW: S^2\RR^3\otimes \lambda)
\end{equation}
and such that
\bee\label{e1.17.11.20}
\bze|X_{\ad} = \omega_{\ad},\; \bze|X_\nu = \omega_{\nu} \mbox{ for
}\nu=0,1,\ldots,k.
\eee
\label{formal_thm}
\end{thm}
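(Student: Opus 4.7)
The plan is to first build a smooth symplectic triple $\bze_0$ on $\cW$ that realises the prescribed boundary values and has $Q(\bze_0)$ vanishing at every boundary hypersurface, and then to kill $Q$ to infinite order by an iterative correction argument in the spirit of Theorem~\ref{perturbative}, completed by Borel summation.

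For the initial approximation, I would choose a partition of unity on $\cW$ subordinate to the tubular neighbourhoods $U_\nu$ and $V$ of Notation~\ref{n_bdf}, and use the retractions $r_\nu$ together with the Gibbons--Hawking fibration near $X_{\ad}$ to transport $\omega_\nu$ and $\omega_{\ad}$ off their boundary faces. Because the cutoffs destroy closedness if applied directly to the $\omega$'s, I would instead patch at the level of primitives: Corollary~\ref{prim_1} provides primitives $b_{\ve}$ for the essential part of $\omega_{\ad}$ near $X_{\ad}\cap X_\nu$, while Proposition~\ref{prim_2} provides primitives for the (exponentially small) difference between $\omega_0=\omega_{\AH,\ve}$ and the negative-mass Taub--NUT triple of \eqref{e2.10.10.20}. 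The crucial compatibility is that, to leading order and under the gluing identification $\vc{\kappa}$ of \S\ref{s_gluing_space}, Taub--NUT \emph{is} the Gibbons--Hawking triple near each $p_\nu$ and the Atiyah--Hitchin triple matches its asymptotic Taub--NUT model at $X_0\cap X_{\ad}$ up to $O(\rho^\infty)$. Glueing the primitives with cutoffs and taking $\rd_\pi$ yields a closed triple $\bze_0\in \Omega^2_\phi(\cW)^3$ which is positive-definite (i.e.\ symplectic) near each boundary face and satisfies \eqref{e1.17.11.20}; since every boundary restriction is hyperK\"ahler, $Q(\bze_0)$ vanishes on each boundary hypersurface.

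The inductive step is the fibrewise version of Lemma~\ref{pertlem}. Suppose $\bze_n$ is a symplectic triple on $\cW$ with $Q(\bze_n) \in \sigma^{n+1}C^\infty$. Seek a triple of $1$-forms $a_n \in \Omega^1_\phi(\cW)^3$ of order $\sigma^{n+1}$ such that $\bze_{n+1} = \bze_n + \rd_\pi a_n$ satisfies $Q(\bze_{n+1}) \in \sigma^{n+2}C^\infty$. Up to lower order, the linearisation of $Q$ at $\bze_n$ is $a\mapsto (\rd_\pi a_{(j} \wedge \bze_{n,k)})_{0}$, so in a Coulomb gauge ($\rd_\pi^* a_n = 0$) the equation to be solved at each boundary face is the fibrewise self-dual equation $\rd_+ \alpha = \beta$ associated to the hyperK\"ahler metric $g_\nu$ (respectively $g_{\ad}$), with $\beta$ determined explicitly by the Taylor coefficient of $Q(\bze_n)$ of order $\sigma^{n+1}$. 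Each such equation is elliptic modulo gauge on the compactified $\phi$-manifolds $\ol{X_\nu}$ and $\ol{X_{\ad}}$ and, since we only need a smooth (not $L^2$) solution for the formal problem, it admits a smooth solution that can be constructed by the standard $\phi$-elliptic analysis. Iterating and Borel-summing the resulting formal series in $\sigma_0,\ldots,\sigma_k,\rho,\sigma_I$ yields a smooth symplectic triple $\bze$ on $\cW$ whose boundary restrictions are unchanged and with $Q(\bze) \in \dot{C}^\infty$.

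The main obstacle is the compatibility of these iterative corrections at the corners $X_\nu \cap X_{\ad}$ and $X_\nu \cap I_\infty$: a correction determined by a Taylor expansion in $\sigma_\nu$ must match, at the appropriate order, the one obtained by expanding in $\rho$ (or $\sigma_I$) along the other face. Here the special geometry of the problem is essential: near $X_\nu\cap X_{\ad}$ for $\nu\geq 1$ the fibrewise triple is \emph{exactly} modelled on Taub--NUT in both normal directions, so the two formal expansions come from restricting a single smooth form; at $X_0 \cap X_{\ad}$ the matching is automatic to infinite order because of the $O(\rho^\infty)$ bound from Proposition~\ref{prim_2}; and near $I_\infty$ all the models reduce to the same flat $\phi$-structure. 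Once these compatibilities are checked at every order, a Borel lemma on manifolds with corners assembles the face-wise corrections into the single smooth $\bze$ claimed in the theorem.
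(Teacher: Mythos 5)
Your initial approximation is in the right spirit and is close to the paper's Proposition~\ref{p1.17.1.20}: patching at the level of primitives using Corollary~\ref{prim_1} and Proposition~\ref{prim_2} is exactly what is done there.  One inefficiency: a global partition of unity over all the $U_\nu$ and $V$ is not needed, since the Gibbons--Hawking triple $\bom_1$ already restricts correctly to every $X_\nu$ with $\nu\geq 1$ and to $X_{\ad}$; the only genuine gluing is between $\bom_0$ and $\bom_1$ in the corner $U_0\cap V$.  More importantly, the conclusion you draw --- that $Q(\bze_0)$ ``vanishes on each boundary hypersurface'' --- is too weak to launch the induction.  What the patching in fact delivers, and what the paper uses, is the sharper estimate $Q(\bom^\chi)\in \ve^3\rho^\infty\sigma_I^\infty C^\infty$, which already encodes the asymmetric decay at $X_{\ad}$ versus the $X_\nu$.

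The inductive step contains a genuine gap.  You propose a single iteration in a total boundary defining function $\sigma$, solving at each face the self-dual equation $\rd_+\alpha=\beta$ for the metric $g_\nu$ (resp.\ $g_{\ad}$) and asserting that ``each such equation is elliptic modulo gauge on the compactified $\phi$-manifolds.''  This fails at $X_{\ad}$.  The metric $g_{\ad}=|\rd x|^2/\ve^2+\alpha_{\ad}^2$ is nondegenerate only as a metric on the \emph{rescaled} bundle $T_\phi(\cW/I)|X_{\ad}$, not on $TX_{\ad}$.  The associated Laplacian is $\Delta_{g_\ve}=\ve^2h^{-1}\wt\Delta_0-h\p_\theta^2$ (equation~\eqref{e2.7.8.18}), which on $X_{\ad}$ (where $\ve=0$) degenerates to $-h\p_\theta^2$, differentiating only along the circle fibres.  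There is no elliptic problem to solve on $X_{\ad}$.  The paper handles this by a two-stage iteration (Propositions~\ref{iteration1} and \ref{iteration2}): first a correction $\ba$ supported near $\bigcup_\nu X_\nu$, solved by the genuine Poisson equation on the strongly ALF spaces $X_\nu$ (Theorem~\ref{th_poisson_1}); then a correction $\bbb$ supported near $X_{\ad}$, solved by Fourier-decomposing in $\theta$ and inverting $\ve^2\Delta_0+n^2$ via an explicit Green's function on $\RR^3$ (Theorem~\ref{poisson_2}).  The inductive hypothesis $Q=\ve^N F+\ve^{N+3}G$, with $F$ rapidly decaying at $X_{\ad}\cup I_\infty$ and $G$ rapidly decaying at all $\sigma$'s, is precisely what makes these two corrections mesh at the corners; your single-index bookkeeping $\sigma^{n+1}$ does not record this asymmetry, so the ``compatibility at every order'' you invoke at the end would not in fact hold with the uniform scheme you propose.
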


\begin{rmk}  Recall that $f \in \dot{C}^\infty(\cW)$ means that $f$ is
  smooth with all 
  derivatives rapidly decreasing at all boundary hypersurfaces of
  $\cW$. 
\end{rmk}

\subsection{Initial approximation}

\begin{lem}\label{l11.2.12.20}
Denote by $\bom_1$ the lift to $\cW_1$ of the hyperK\"ahler triple
\eqref{e11.1.10.19} of the family of metrics $g_{\ve}$.  Then $\bom_1$
extends to a smooth triple on $\cW_1\subset \cW$, with 
\bee
\bom_1|X_{\nu} = \omega_\nu\mbox{ for }\nu=1,\ldots,k
\eee
and
\bee\label{e1.2.12.20}
\bom_1|X_{\ad} = \omega_{\ad}.
\eee
\end{lem}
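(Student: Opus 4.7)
The plan is to verify smoothness and boundary behaviour of $\bom_1$ face by face on $\cW_1$, reducing the question to smoothness of its three building blocks: the potential $h_\ve$, the rescaled $1$-forms $\rd x_j/\ve$, and the Gibbons--Hawking connection $\alpha$ characterised by $\rd\alpha = *_\ve \rd h_\ve$. Since $\cW_1$ is obtained by pulling back the circle bundle $\phi\colon M_\ve \to \{|x|>2\ve\}$ along the blow-down $\wt{\cB}\to \ol{\RR^3}$ and then adjoining the boundary at infinity, smoothness of $\bom_1$ as a section of $\Lambda^2 T^*_\phi(\cW_1/I)$ reduces to smoothness of these three ingredients on the compactified base.

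The forms $\rd x_j/\ve$ extend smoothly: by Proposition~\ref{plift2} applied at each blow-up centre $(p,0)$ for $p\in P\cup\{0\}$, the vector fields $\ve\,\p/\p x_j$ lift smoothly to $\wt{\cB}$, hence so do their duals, and on each NUT front face $F_p$ they restrict to $\rd x'_{p,j}$. The function $h_\ve$ likewise extends smoothly: the summand $\ve/|x-p|$ lifts near $F_p$ to $1/|x'_p|$ by Proposition~\ref{plift1}, while all other summands remain smooth and vanish on the rest of $\wt{Y}$; smoothness near $I_\infty$ is the content of Proposition~\ref{prop_alf_extension}. In particular $h_\ve \to 1+1/(2|x'_p|)$ on $F_p$ and $h_\ve \to 1$ on $\wt{Y}$.

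The real work is constructing $\alpha$ as a smooth connection on the principal circle bundle over $\wt{\cB}$ whose restriction to each face is the right one. Locally, near a NUT face $X_p$, I would split $h_\ve = \ve/(2|x-p|) + h'_p$ with $h'_p$ smooth at $(p,0)$, take $\alpha^{(0)}_p$ to be the standard monopole connection in the variable $x'_p$ (smooth across $F_p$), and produce the remainder with $\rd\alpha^{(1)}_p = *_\ve \rd h'_p$ by the quantitative Poincar\'e lemma used in the proof of Proposition~\ref{primitive2}, obtaining a smooth primitive vanishing to positive order at $\ve=0$. Near $I_\infty$ a smooth connection is delivered directly by Proposition~\ref{prop_alf_extension}. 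The main obstacle, and the heart of the argument, is to reconcile these local choices into a single smooth $\alpha$ on all of $\cW_1$: any two primitives of the globally defined curvature $*_\ve\rd h_\ve$ on an overlap differ by a closed $1$-form, so one uses the triviality of $H^1$ on a system of suitably small contractible tubular neighbourhoods together with gauge changes $\alpha \mapsto \alpha + \rd f$ to patch them together.

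Once smoothness is established, the stated boundary restrictions are immediate. Substituting the limits $h_\ve \to 1+1/(2|x'_p|)$, $\alpha \to \alpha^{(0)}_p$ and $\rd x_j/\ve \to \rd x'_{p,j}$ into \eqref{e11.1.10.19} produces the standard Taub--NUT triple, identifying $\bom_1|_{X_\nu}=\omega_\nu$ for $\nu=1,\ldots,k$ as in Notation~\ref{notn1.22.3.20}. Substituting $h_\ve \to 1$ and $\alpha\to \alpha_{\ad}$ into \eqref{e11.1.10.19} yields the triple of $g_{\ad} = |\rd x|^2/\ve^2 + \alpha_{\ad}^2$, viewed as a quadratic form on $T_\phi(\cW/I)|_{X_{\ad}}$, which is precisely $\omega_{\ad}$.
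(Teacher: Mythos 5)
Your proposal is correct and follows essentially the same route as the paper: reduce to smoothness of $h_\ve$, $\rd x_j/\ve$, and $\alpha$ on the blown-up base, invoke Propositions~\ref{plift1} and \ref{plift2}, then read off the boundary restrictions. The one place where you supply more detail than the paper is in the smooth extension of the connection $\alpha$ (splitting into monopole piece plus remainder with the quantitative Poincar\'e lemma near each $F_p$, Proposition~\ref{prop_alf_extension} near $I_\infty$, and gauge-patching); the paper compresses this, simply asserting that $\rd x_j/\ve$ and $\alpha$ span $T^*_\phi(\cW/I)$ near $X_{\ad}$ and relying on what is already established in \S\ref{s_primitives_gh} and Proposition~\ref{prop_alf_extension}. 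Your filling-in is accurate and consistent with the paper's implicit argument.
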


Here we regard $\cW_1$ as an open subset of $\cW$, cf.\
Remark~\ref{r1.2.12.20}. In \eqref{e1.2.12.20} 
\bee\label{e2.2.12.20}
\omega_{\ad,1} = \alpha_{\ad}\wedge e_1 + e_2\wedge e_3,
\eee
and similarly for the other components of the triple, where $e_j = \rd
x_j/\ve$ and $\alpha_{\ad} = \alpha|X_{\ad}$.  Thus $\omega_{\ad}$ is
the hyperK\"ahler triple of $g_{\ad}$.

\begin{proof}
We verify that all the pieces in the formula \eqref{e11.1.10.19} lift
smoothly as claimed.  

Write $\bh$ for the lift to $\wt{\cB}$ of the function
$(x,\ve) \mapsto h_{\ve}(x)$.  The from Prop.~\ref{plift1}, the term
$\ve/|x-p|$ has a smooth lift to $[\ol{\RR}^3\times I ;
(p,0)]\setminus \Sigma_p$, and
this lift vanishes on the  (lifts of) $\{\ve=0\}$ and $S^2\times I$
(spatial infinity). Near the front face it is equal to $1/|x'_p|$.
The lift (really pullback) of this term to $\wt{\cB}$ is therefore
smooth away from $\Sigma_p$ and vanishes at all boundary hypersurfaces
other than $F_p$.   This shows:
\bee\label{e1.29.11.20}
\bh|\wt{Y} = 1, \bh|F_p  = 1 +\frac{1}{2|x'_p|}, \bh|S^2\times I = 1.
\eee

We saw in Prop~\ref{plift2} that the lifts of the $\ve \p_{x_j}$ are
smooth on the blow-up.  From the local forms of the lifts, it is easy
to see that they are in fact smooth sections of $T_{\phi}(\cW/I)$ near
$X_{\ad}$. (One
needs to check Def.~\ref{def_rescaled}). If $\theta$ is a local fibre
coordinate then $\p/\p\theta$ is also a section of $T_{\phi}(\cW/I)$
and it is not hard to see that the $\ve\p_{x_j}$ and $\p_\theta$
together span $T_{\phi}(\cW/I)$ near $X_{\ad}$. Away from $X_{\ad}$,
$T_{\phi}(\cW/I)$ is just the ordinary $\pi$-vertical tangent bundle
$T(\cW/I)$.  Dually, the $1$-forms $\rd x_j/\ve$ and $\alpha$ locally
span $T^*_{\phi}(\cW/I)$ near $X_{\ad}$, and taking into account what
we have already proved about $\bh$, it follows that $\bom_1$ is smooth
near $X_{\ad}$ and also that \eqref{e1.2.12.20} holds.

In a set of the form $|x'_{\nu}| < R$ ($x'_{\nu} = (x-p_{\nu})/\ve$)
the triple is smooth in the $x'_{\nu}$ and restricts to define the
Taub--NUT triple (by \eqref{e1.29.11.20}). 

\end{proof}

Similarly, the family $\omega_{\AH,\ve}$ of hyperK\"ahler triples for
the metrics $g_{\AH,\ve}$ fit together to form a smooth triple on
$\cW_0$:

\begin{lem}
The definition
$$
\bom_0|\pi_0^{-1}(\ve) := \omega_{\AH,\ve}
$$
determines a smooth hyperK\"ahler triple $\bom_0$ on the subset $\cW_0$ of $\cW$,
with $\bom_0|X_0 = \omega_{\AH,0}$.
\label{l1.29.11.20}\end{lem}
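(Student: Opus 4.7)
The plan is to verify that the pointwise definition $\bom_0|\pi_0^{-1}(\ve) = \omega_{\AH,\ve}$ indeed assembles into a smooth relative triple on $\cW_0$, essentially because every ingredient varies smoothly with $\ve$ down to $\ve=0$, and because the relative tangent bundle $T_\phi(\cW/I)$ degenerates into the ordinary vertical tangent bundle on $\cW_0$.

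First I would unpack the ambient geometry: by construction, $\cW_0$ is an open subset of $\AHr\times[0,\ve_0)$, with $\pi_0$ the projection to the second factor, and $\cW_0$ does not meet the hypersurface $X_{\ad}$ or any $X_\nu$ with $\nu\geq 1$. Consequently, using Remark~\ref{r1.2.12.20} together with Definition~\ref{def_rescaled}, the bundle $T_\phi(\cW/I)$ restricted to $\cW_0$ is simply $T(\AHr)\to \AHr$ pulled back (and extended smoothly across $\{\ve=0\}$), and the notion of a smooth section of $\Lambda^2 T^*_\phi(\cW/I)$ reduces to ordinary smoothness on $\AHr\times[0,\ve_0)$. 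So the content of the lemma is the smooth $\ve$-dependence of $\omega_{\AH,\ve}$, uniformly down to $\ve=0$.

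Next I would establish that smoothness by combining Definition~\ref{n1.21.11.20} with Proposition~\ref{prim_2}. In the asymptotic region $\psi^{-1}\{\delta|x'|>1\}$, the metric $g_{\AH,\ve}$ is, by construction, the negative-mass Taub--NUT metric with potential $H'_\ve(x') = 1 + \mu\ve - 2/|x'|$, and the decomposition $\omega_{\AH,\ve} = \Omega'_\ve + \rd a$ from Proposition~\ref{prim_2} exhibits it as a smooth family in $\ve$: the Gibbons--Hawking triple $\Omega'_\ve$ is polynomial in the coefficients of $H'_\ve$ and in a connection form that depends smoothly on $\ve$, while the correction $a$ and all its derivatives are $O(e^{-|x'|})$ uniformly in $\ve$, so they extend smoothly down to $\ve=0$. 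In the interior region (the complement of this asymptotic collar), $g_{\AH,\ve}$ is just the fixed Atiyah--Hitchin metric $g_{\AH,0}$ (the mass parameter only entered through the choice of asymptotic matching), so the triple there is literally $\ve$-independent. Matching the two pieces with a smooth cut-off then gives a smooth family of triples on all of $\AHr$, depending smoothly on $\ve\in[0,\ve_0)$.

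Finally, I would verify the remaining claims. Pulling this family back to $\cW_0\subset\AHr\times[0,\ve_0)$ produces $\bom_0\in\Omega^2_\phi(\cW_0)\otimes\RR^3$, and by construction $\bom_0|\pi_0^{-1}(\ve) = \omega_{\AH,\ve}$ for every $\ve>0$; at $\ve=0$ we have $\pi_0^{-1}(0)=X_0=\ol{\AHr}$ and the restriction is $\omega_{\AH,0}$ by fiat. The relative hyperK\"ahler identity $Q(\bom_0)=0$ and $\rd_\pi \bom_0 = 0$ are fibrewise statements that hold on each $\pi_0^{-1}(\ve)$ by the hyperK\"ahler property of $g_{\AH,\ve}$, and by continuity they hold on $\cW_0$. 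The only real subtlety is that the family $g_{\AH,\ve}$ is only ever specified up to its exponentially decaying tail, but that ambiguity is absorbed in Proposition~\ref{prim_2} and does not affect the smoothness statement---this is the one point I would flag as requiring a little care, but no more than that, since the exponentially decaying tail remains smooth in $\ve$.
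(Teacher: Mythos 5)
The proposal goes astray at the first step, with the claim that $\cW_0$ does not meet $X_{\ad}$, and the consequent assertion that $T_\phi(\cW/I)|_{\cW_0}$ is simply the pullback of $T(\AHr)$, so that smoothness reduces to ordinary smoothness on $\AHr\times[0,\ve_0)$. In fact $\cW_0$ is an open neighbourhood of $X_0=\ol{\AHr}$ in the manifold-with-corners $\cW$, and $X_0$ is itself a manifold with fibred boundary whose boundary $\p X_0$ is the corner $X_0\cap X_{\ad}$; any such neighbourhood necessarily contains an open piece of $X_{\ad}$. In the local coordinates $(y,\rho,\sigma_0,\theta)$ of Notation~\ref{n_bdf} near that corner, the condition $|\psi(z')|<\delta/\ve'$ reads $\sigma_0<\delta$ (with no restriction on $\rho$), so $\cW_0$ contains a collar of $\{\rho=0\}=X_{\ad}$; this is also the only reading under which the manipulation of $\bom_0$ on $U_0\cap V$ in Proposition~\ref{p1.17.1.20} makes sense. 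Near $\rho=0$ the rescaled bundle $T_\phi(\cW/I)$ is genuinely different from the naive vertical tangent bundle, and the substantive content of the lemma---what the later patching argument actually needs---is precisely that $\bom_0$ extends to a smooth section of $\Lambda^2 T^*_\phi(\cW/I)$ up to and including $\rho=0$. Your reduction to ordinary $\ve$-smoothness discards exactly the part that needs proof.

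The paper's argument is aimed squarely at this point: over the interior of $X_0$ (where $\rho>\rho_0>0$) there is indeed nothing to check, but near $\rho=0$ the triple $\omega_{\AH,\ve}$ agrees with a Gibbons--Hawking triple up to exponentially decaying corrections (Proposition~\ref{prim_2}), and one then reuses the observations from the proof of Lemma~\ref{l11.2.12.20} to see that both $\Omega'_\ve$ and the $O(\rho^\infty)$ remainder lift to smooth sections of the $\phi$-bundle at the corner $\rho=\sigma_0=0$. You cite Proposition~\ref{prim_2}, but only in support of the easier $\ve$-smoothness; the lift to $T_\phi$ at the corner is never taken, and that is the genuine gap. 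Two smaller inaccuracies: $g_{\AH,\ve}$ is \emph{not} the fixed metric $g_{\AH,0}$ in the interior (the normalization of Definition~\ref{n1.21.11.20} rescales the metric globally), and there are no ``two pieces'' to join by a cut-off---$g_{\AH,\ve}$ is a single smoothly $\ve$-dependent metric, not a patchwork.
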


\begin{proof}
Over the interior of $X_0$, in other words in any subset of the form
$\{\rho > \rho_0\}$, where $\rho_0>0$, there is nothing to check.
Near the corner $\rho = 0$, we have the triple of a Gibbons--Hawking
metric, up to exponentially small terms, and the same observations
used in the proof of Lemma~\ref{l1.29.11.20} apply here as well.
\end{proof}

Our initial approximation to $\bze$ is furnished by a simple patching
of $\bom_0$ and $\bom_1$:
\begin{prop}\label{p1.17.1.20}
  There exists a smooth symplectic triple $\bom^\chi$ on $\cW$ satisfying
  \begin{equation}\label{e7.14.3.20}
    Q(\bom^\chi)  \in \ve^3\rho^\infty\sigma_I^\infty
    C^\infty(\cW, S^2_0\RR^3\otimes \lambda).
    \end{equation}
and such that
\bee\label{e1.3.4.20}
\bom^\chi|X_{\ad} = \omega_{\ad},\;\bom^\chi|X_\nu = \omega_\nu
 \mbox{ for
}\nu=0,1,\ldots,k.
\eee
(Recall that we have defined $\lambda$ to be the `relative density
bundle' $\lambda = \Lambda^4T^*_\phi(\cW/I)$.)
\end{prop}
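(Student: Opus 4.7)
The plan is a cut-off gluing of the two fibrewise hyperK\"ahler triples $\bom_0$ and $\bom_1$ produced by Lemmas~\ref{l1.29.11.20} and \ref{l11.2.12.20}. Since $Q(\bom_i) = 0$ on $\cW_i$, any smooth interpolation $\bom^\chi$ satisfies $Q(\bom^\chi)\equiv 0$ wherever the cut-off is locally constant, so the work is entirely in the transition region; the task reduces to choosing a cut-off whose support, combined with the size of relevant primitives, yields the decay \eqref{e7.14.3.20}.

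Use Corollary~\ref{prim_1} and Proposition~\ref{prim_2} (the latter pulled back by $\bs$ and $\vc\kappa$) to write both triples over the overlap $\cV = \cW_0\cap\cW_1$ as exact perturbations of a common leading-order Gibbons--Hawking triple $\Omega_\ve$ with potential $H_\ve = 1+\mu\ve-2\ve/|x|$: one has $\bom_1 = \Omega_\ve + \rd_\pi b_\ve$ with $|b_\ve|_\ve = O(|x|^3)$, and $\bom_0 = \Omega_\ve + \rd_\pi a$ with $a$ (and all derivatives) exponentially decaying in $|x'|=|x|/\ve$. Set $\eta = b_\ve - a$, so that $\rd_\pi\eta = \bom_1 - \bom_0$ on $\cV$, and pick a smooth cut-off $\chi$ on $\cW$ transitioning, inside $\cV$, from $\chi = 1$ on the $\cW_1$-side (containing $X_{\ad}$) to $\chi = 0$ on the $\cW_0$-side (containing $X_0$). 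Define
\[
\bom^\chi := \bom_1 + \rd_\pi\bigl((\chi-1)\eta\bigr) = \chi\,\bom_1 + (1-\chi)\bom_0 + \rd_\pi\chi\wedge\eta,
\]
extended by $\bom_1$ on $\cW_1\setminus\cV$ and by $\bom_0$ on $\cW_0\setminus\cV$. By construction $\bom^\chi$ coincides with $\bom_1$ where $\chi=1$ and with $\bom_0$ where $\chi=0$, and \eqref{e1.3.4.20} holds; at the shared corner $\p X_0 = \p_0 X_{\ad}$ the two triples $\omega_0$ and $\omega_{\ad}$ restrict to the same asymptotic Taub--NUT model, so the construction is consistent. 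Positivity of the $3\times 3$ pairing matrix of $\bom^\chi$ is inherited from hyperK\"ahlerity of $\bom_0,\bom_1$ at the endpoints and persists through the transition for $\ve_0$ small.

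The error $Q(\bom^\chi)$ is supported in $\supp(\rd\chi)$, a region arranged to lie at positive distance from $X_{\ad}$, from $I_\infty$, and from each $X_\nu$ with $\nu\geq 1$; in the defining functions of Notation~\ref{n_bdf}, $\rho$, $\sigma_I$, and each $\sigma_\nu$ for $\nu\geq 1$ are bounded below on $\supp(\rd\chi)$. Hence $Q(\bom^\chi)\equiv 0$ in a fixed neighbourhood of each of these faces, yielding the $\rho^\infty\sigma_I^\infty$ decay \emph{for free}. For the $\ve^3$ factor, expand $Q(\bom_1+\rd_\pi\gamma)$ with $\gamma=(\chi-1)\eta$ and use the hyperK\"ahler identities $Q(\bom_0)=Q(\bom_1)=0$ together with the polarised consequence $2B(\bom_1,\rd_\pi\eta)=Q(\rd_\pi\eta)$ (derived from $Q(\bom_1-\rd_\pi\eta)=Q(\bom_0)=0$) to cancel the linear-in-$\rd_\pi\eta$ terms. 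What remains is bilinear in $\eta$, $\rd_\pi\eta$, and $\rd_\pi\chi\wedge\eta$; tracking these $\phi$-norms using $|b_\ve|_\ve=O(|x|^3)$, $|\rd_\pi b_\ve|_\ve=O(\ve|x|^2)$ from Proposition~\ref{primitive2}, and the exponential smallness of $a$ at the chosen transition scale produces the claimed $\ve^3$ bound.

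The main technical obstacle is this final $\phi$-norm bookkeeping. Individual cross terms such as $B(\bom_1,\rd_\pi\chi\wedge\eta)$ are only $O(\ve)$ if bounded naively, and extracting the sharp $O(\ve^3)$ requires both the algebraic hyperK\"ahler identities $\bom_{1,j}\wedge\bom_{1,k}=\nu_1\delta_{jk}$ (which force the pairings with $\bom_1$ to collapse) and a careful choice of cut-off balancing the polynomial smallness of $b_\ve$ at $|x|\sim\ve$ against the exponential smallness of $a$ at the corresponding $|x'|=|x|/\ve$. Once this estimate is in place the proposition is established.
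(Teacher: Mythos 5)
Your decomposition over the overlap $U_0\cap V$ into a common Gibbons--Hawking model $\bom_Z$ plus primitives $a$ (exponentially small, from Proposition~\ref{prim_2}) and $b$ ($O(\sigma_0^3)$, from Corollary~\ref{prim_1}) is exactly the paper's starting point. The flaw is in the gluing scheme itself: you interpolate with a \emph{single} cut-off $\chi$, required to equal $1$ near $X_{\ad}$ and $0$ near $X_0$. No such smooth $\chi$ exists on $\cW$, because $X_{\ad}$ and $X_0$ are adjacent boundary hypersurfaces meeting in the corner $X_0\cap X_{\ad}$: every neighbourhood of $X_{\ad}$ meets every neighbourhood of $X_0$ there, so a function identically $1$ on the first and identically $0$ on the second is impossible (a cut-off in $\rho$ alone does not vanish on $X_0$ near the corner, a cut-off in $\rho/\sigma_0$ is not smooth at the corner, and so on). In particular your claim that $\supp(\rd_\pi\chi)$ can be placed at positive distance from $X_{\ad}$ --- and hence that the $\rho^\infty$ factor comes ``for free'' --- is false; the transition region necessarily touches the corner. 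A secondary slip: after using $2Q(\bom_1,\rd_\pi\eta)=Q(\rd_\pi\eta)$, the cross term $2Q(\bom_1, \rd_\pi\chi\wedge\eta)$ survives and is \emph{linear} in $\eta$, so ``what remains is bilinear'' is not correct either.

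The paper sidesteps the corner by using \emph{two} cut-offs, switching off $a$ and $b$ separately: $\bom^\chi = \bom_Z + \rd_\pi(\chi(\sigma_0)a + \chi(\rho)b)$ (the paper's display \eqref{e11.14.3.20} has $\chi(\rho)$ and $\chi(\sigma_0)$ interchanged, but the intended formula is forced by \eqref{e21.17.1.20}--\eqref{e22.14.3.20}). Near the corner both cut-offs equal $1$, so $\bom^\chi = \bom_Z + \rd_\pi(a+b)$, which is neither $\bom_0$ nor $\bom_1$; the boundary values \eqref{e1.3.4.20} still hold because $\rd_\pi(\chi(\rho)b)\in\rho\sigma_0^3\Omega^2_\phi$ vanishes at $X_0$ while $\rd_\pi(\chi(\sigma_0)a)=O(\rho^\infty)$ vanishes at $X_{\ad}$. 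The estimate \eqref{e7.14.3.20} then follows by computing vanishing orders of $Q(\bom^\chi)$ separately near each face --- $O(\sigma_0^3)$ near $X_0$ from the $\sigma_0^3$ decay of $b$, $O(\rho^\infty)$ near $X_{\ad}$ from the rapid decay of $a$ --- and combining them at the corner via smoothness of $Q(\bom^\chi)$; no algebraic cancellation of linear terms is needed. If you want to salvage your single-interpolation picture, you would have to accept a $\chi$ that is merely $\bo$-smooth (smooth in $\log\rho-\log\sigma_0$), which would yield only a polyhomogeneous rather than smooth $\bom^\chi$ and so would fall short of the smoothness asserted in the Proposition and ultimately in Theorem~\ref{mainthm}.
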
  

\begin{proof}  By Lemma~\ref{l11.2.12.20}, $\bom_1$ is smooth in
  $\cW_1$ and satisfies the boundary conditions at $X_{\ad}$ and the
  $X_{\nu}$ except $\nu=0$.  Thus $\bom^{\chi}$ will be obtained by
  patching $\bom_0$ to $\bom_1$, modifying these forms only in a
  neighbourhood $U_0\cap V$ of the corner $Z=X_{0}\cap X_{\ad}$. 

Recall Corollary~\ref{prim_1} gave the formula $\omega_{\ve} =
\Omega_{\ve} + \rd b_{\ve}$ near $x=0$, where $\Omega_{\ve}$ is the
triple \eqref{e51.2.4.20} associated to 
the family of harmonic functions 
\bee\label{e3.17.11.20}
1 + \mu \ve -\frac{2\ve}{|x|}. 
\eee
Then the family $\Omega_{\ve}$ gives a smooth triple $\bom_Z$, say, in
$U_0 \cap V$ and Cor.~\ref{prim_1} and Prop.~\ref{prim_2} give
\begin{equation}\label{e1.14.3.20}
  a \in \rho^{\infty}\Omega^1_{\phi}(U_0\cap V)\otimes \RR^3\mbox{
    such that } \bom_0 = \bom_Z + \rd_\pi a  
\end{equation} 
and
\begin{equation}\label{e2.14.3.20}
b \in \sigma_0^3\Omega^1_{\phi,\eb}(U_0\cap V)\otimes \RR^3\mbox{
    such that } \bom_{\ad} = \bom_Z + \rd_\pi b,
\end{equation}
in $U_0\cap
V$.

Let $\chi(t)$ be a smooth cut-off function 
equal to $1$ for
$t\leq \delta/2$ and vanishing for $t\geq \delta$, $\chi(t)\in [0,1]$
for all $t$, and define
\begin{equation}\label{e11.14.3.20}
  \bom^\chi  = \bom_Z +\rd_\pi(\chi(\rho)a +\chi(\sigma_0)b).
\end{equation}
This triple is initially 
defined only in $U_0\cap V$, but the claim is that it
extends naturally to the whole of $\cW$ (possibly after shrinking
$\ve_0$). 

To prove the claim, 
consider first the restriction of $\bom^\chi$ to $\{\sigma_0< \delta/2\}$.
Then $\chi(\sigma_0)=1$ and so
\bee\label{e21.17.1.20}
\bom^\chi  = \bom_Z + \rd_{\pi} a + \rd_{\pi} (\chi(\rho)b) 
= \bom_0 + \rd_{\pi} (\chi(\rho)b) \mbox{ in
}[0,\delta/2)_{\sigma_0}\times [0,\delta)_\rho.
\eee
Both terms on the RHS extend to the neighbourhood $\{\sigma_0<
\delta/2\}$ of $X_0$.

Similarly, we have
\bee \label{e22.14.3.20}
\bom^{\chi} = \bom_1 + \rd_\pi(\chi(\sigma_0)a)\mbox{ in }\{\rho < \delta/2\}
\eee
and so $\bom^{\chi}$ extends smoothly to $\cW_1$. Observe that
$\bom^\chi$ is identically equal to $\bom_0$ in $\cW_0 \setminus
U_0\cap V$ and is identically equal to $\bom_1$ in $\cW_1 \setminus
U_0\cap V$. 

To prove \eqref{e7.14.3.20} and \eqref{e1.3.4.20}, note quite generally that
\bee\label{l1.3.4.20}
\alpha \in \rho^n\sigma_0^m\Omega^k_{\phi}(U_0\cap V) \Rightarrow
\rd_\pi \alpha \in \rho^{n+1}\sigma_0^{m+1}\Omega^{k+1}_\phi(U_0\cap
V),\eee
as follows from the explicit forms of the basis vectors of
$T_{\phi}(\cW/I)$ near $Z$.  Then \eqref{e1.14.3.20} and
\eqref{e2.14.3.20} give
\begin{equation}\label{e23.14.3.20}
  \rd_\pi(\chi(\sigma_0)a) = O(\rho^\infty)\mbox{ and }
\rd_\pi(\chi(\rho)b) \in \rho\sigma_0^3\Omega^2_{\phi,\eb}.
\end{equation}
Using these in \eqref{e21.17.1.20} and \eqref{e22.14.3.20} now gives
\eqref{e1.3.4.20}. 

It remains to verify \eqref{e7.14.3.20}.  To do so, observe first that
$\bom^\chi$ is smooth, and will be a symplectic triple if $\delta$ is
chosen small enough.  It is clear that $Q(\bom^\chi)$ will also be
smooth.  Because of this {\em a priori} smoothness, it is sufficient
to compute the order of vanishing of $Q(\bom^\chi)$ near $X_0$ and $X_{\ad}$ but away from the
corner.  In $U_0\setminus X_{\ad}$,
 use \eqref{e21.17.1.20}.  Then
\begin{eqnarray}
  Q(\bom^\chi) &  = &Q(\bom_0) + 2Q(\bom_0, \rd_{\pi}(\chi(\rho)b)) +
  Q(\rd_{\pi}(\chi(\rho)b), \rd_{\pi}(\chi(\rho)b)) \\
&=& 2Q(\bom_0, \rd_{\pi}(\chi(\rho)b)) + O(\rho^\infty\sigma_0^6)
\end{eqnarray}
because $\bom_0$ is hyperK\"ahler and $Q$ applied to an essentially
basic form is $O(\rho^\infty)$ for degree reasons.  
The first term is zero because
$\bom_0$ is hyperK\"ahler, and the second is $O(\rho\sigma_0^3)$ for $\sigma_0
\to0$ away from $\rho=0$ (cf.\ \eqref{e23.14.3.20}).  In similar fashion,
we see that $Q(\bom^\chi) =
O(\rho^\infty)$ for $\rho\to 0$ away from $\sigma_0=0$.  Smoothness
now gives that $Q(\bom^\chi) = O(\ve^3\rho^\infty)$ in $U_0 \cap V$
since $\ve = \rho\sigma_0$ in this set.   This error term is
moreover supported in $U_0\cap V$ which gives the additional vanishing
in $\sigma_I$.   
\end{proof}

\begin{notn}  In the interest of readability we shall write $\rd$ for
  $\rd_{\pi}$ in the rest of this section.
\end{notn}

Theorem~\ref{formal_thm} is proved by induction.  The inductive
assumption is that we have found
\begin{equation}\label{e1.14.8.19}
\bc  \in  \sigma_I^2\Omega^1_{\phi,\eb}(\cW)\otimes \RR^3
\end{equation}
such that
\begin{equation}\label{e2.14.8.19}
Q(\bom^\chi + \ve \rd \bc) =
\ve^NF + \ve^{N+3}G,
\end{equation}
where
\bee\label{e2a.14.8.19}
F \in \rho^\infty\sigma^\infty_I C^\infty(\cW, S^2_0\RR^3\otimes
\lambda),\;\;
G \in\sigma^\infty
C^\infty_{\ei}(S^2_0\RR^3\otimes \lambda). 
\eee
Equations \eqref{e2.14.8.19} and
\eqref{e2a.14.8.19} imply that the restriction of $\bom^\chi +
\ve\rd\bc$ to $\pi^{-1}(\ve)$ is a symplectic triple that is
`approximately hyperK\"ahler to order $\ve^N$'.
We need to keep track of the fine structure of the error term as in
\eqref{e2a.14.8.19} to be sure of the
smoothness of the triple $\bze$ that is claimed in
Theorem~\ref{formal_thm}.

We shall construct $\ba$ defined near $\bigcup X_\nu$ and $\bbb$ defined near
$X_{\ad}$, essentially basic and decaying near spatial infinity, so that with
\begin{equation}\label{e3.14.8.19}
\bc' =  \bc + \ve^{N-1}\rd \ba + \ve^{N+1}\rd \bbb,
\end{equation}
we have
\begin{equation}\label{e4.14.8.19}
Q(\bom^\chi + \ve \rd \bc')  = \ve^{N+1}F' + \ve^{N+4}G',
\end{equation}
where $F'$ and $G'$ are in the same spaces as $F$ and $G$ in
\eqref{e2a.14.8.19}.  Thus we have improved the error term in
\eqref{e2.14.8.19} by one order in $\ve$. 

The induction starts because of Proposition~\ref{p1.17.1.20}, which is
the case $N=3$ of \eqref{e2.14.8.19}.  Given \eqref{e2.14.8.19}, the
required $\ba$ and $\bbb$ are obtained by solving a Poisson equation respectively
over $\bigcup X_\nu$ and on the base $Y_{\ad}$ of 
the $S^1$-bundle $X_{\ad}\to Y_{\ad}$.

\subsection{Construction of $\ba$: linear theory}
\label{ba_construction_linear}
We gather in this subsection the linear theory of the equation
$Q(\omega,\rd a) = f$, on an ALF gravitational instanton $X$, where
the RHS is rapidly decreasing near $\p X$.

The following is an explicit version of the infinitesimal
diffeomorphism gauge invariance of the linearized equations:

\begin{lem}  Let $X$ be a hyperK\"ahler $4$-manifold
  with hyperK\"ahler triple $\omega$.  For any vector field 
$v$,  we have $Q(\omega,\rd(\iota_v\omega)) = 0$.
\label{l2.17.1.20}\end{lem}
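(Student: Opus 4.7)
The plan is to deduce this from the diffeomorphism invariance of $Q$, combined with the closedness of the hyperK\"ahler forms. The operator $Q$ is built only from wedge product and the algebraic identity $\delta_{ij}$ on the $\mathbb{R}^3$ factor, so it is natural under diffeomorphisms: for any diffeomorphism $\phi$ and any triple $\eta$,
\[
Q(\phi^*\eta) = \phi^* Q(\eta).
\]

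Let $\phi_t$ denote the local flow of $v$. Applying the identity above with $\eta = \omega$, and using that $Q(\omega) = 0$ since $\omega$ is a hyperK\"ahler triple, we obtain $Q(\phi_t^*\omega) = \phi_t^* Q(\omega) = 0$ for all $t$ for which the flow is defined. Now differentiate at $t=0$: since $Q$ is a quadratic form with associated bilinear form $Q(\cdot,\cdot)$ (normalized as in the proof of Lemma~\ref{pertlem}), we have
\[
0 = \left.\frac{d}{dt}\right|_{t=0} Q(\phi_t^*\omega) = 2\, Q(\omega,\mathcal{L}_v\omega).
\]

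Finally, each $\omega_j$ is closed (it is a symplectic form), so Cartan's magic formula gives $\mathcal{L}_v \omega_j = \iota_v\rd\omega_j + \rd\iota_v\omega_j = \rd\iota_v\omega_j$. Hence $\mathcal{L}_v\omega = \rd(\iota_v\omega)$ and we conclude
\[
Q(\omega, \rd(\iota_v\omega)) = 0,
\]
as required. There is no real obstacle here; the only point to verify carefully is the sign/normalization of the polarization of $Q$, which is fixed by the convention in \eqref{e1.29.7.19} and makes the two occurrences of the $\mathcal{L}_v$ contribution combine into the factor of $2$ used above.
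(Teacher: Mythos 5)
Your proof is correct and follows essentially the same route as the paper: pull back by the flow of $v$, use $\phi_t^*Q(\omega)=Q(\phi_t^*\omega)=0$, differentiate at $t=0$, and apply Cartan's formula (using that each $\omega_j$ is closed) to identify $\mathcal{L}_v\omega$ with $\rd(\iota_v\omega)$. You simply spell out the polarization step a bit more explicitly than the paper does.
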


\begin{proof}
The equation is gauge-invariant, so we have
$Q(\phi^*_t(\omega))=\phi_t^*Q(\omega) = 0$ where
$\phi_t$ is the one-parameter family of diffeomorphisms generated by
$v$.  Then the derivative at $t=0$ of $\phi_t^*(\omega)$ is just
$\rd(\iota_v\omega)$ by Cartan's formula, and the Lemma follows at once.
\end{proof}

\begin{thm}\label{th_poisson_1}
Let $(X,g)$ be a strongly ALF gravitational instanton with hyperK\"ahler triple $\omega$.
Suppose that $f \in \dot{C}^{\infty}(X,S^2_0\RR^3\otimes\lambda)$.
(Recall this means that all derivatives of $f$ vanish at $\p X$.)

Then there exists
\begin{equation}\label{e11.25.3.20}
u \in \rho^2\Omega^1_{\phi,\eb}(X)\otimes\RR^3\mbox{ such that }
  Q(\omega,\rd u) = f.
\end{equation}
(Here $\rho$ is the boundary defining function of $\p X$.)
\end{thm}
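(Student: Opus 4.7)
The plan is to impose a gauge ansatz that reduces $Q(\omega,\rd u)=f$ to a scalar Poisson problem on $X$, which can then be inverted using the Mazzeo--Melrose $\phi$-calculus for the Laplacian on ALF $4$-manifolds.

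Following Theorem~\ref{perturbative}, I would seek $u$ in the gauge-fixed form $u = D^*v$ with $D=\rd^*+\rd_+$ the tripled coupled Dirac operator of $g$ and $v = (v_0,v_+)\in(\Omega^0\oplus\Omega^2_+)(X)\otimes\RR^3$, and set $v_0=0$ since $Q(\omega,\rd u)_{js}$ depends only on $\rd_+ u_j$. The parallelism of the $\omega_j$ trivializes $\Omega^2_+(X) = \RR\omega_1\oplus\RR\omega_2\oplus\RR\omega_3$, so I may write $v_{+,j} = w_{jk}\omega_k$ with $w$ a $3\times 3$ matrix of scalars on $X$. The Weitzenb\"ock formula for $D$ gives $DD^* = \nabla^*\nabla$ on $\Omega^2_+$ because $\Scal(g)=0$ and $W^+(g)=0$ for the hyperK\"ahler $g$, and combined with the parallelism of the $\omega_k$ this yields
\[
\rd_+ u_j \;=\; \rd_+\rd^* v_{+,j} \;=\; (\Delta w_{jk})\,\omega_k,
\]
so that $Q(\omega,\rd u)=f$ becomes $\nu\cdot(\Delta w)^{\mathrm{sym},0}=f$, where $\nu = (\omega_1^2+\omega_2^2+\omega_3^2)/3$ and $(\cdot)^{\mathrm{sym},0}$ denotes the trace-free symmetric part. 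Taking $w$ itself trace-free symmetric, this reduces the problem to five decoupled scalar Poisson equations $\Delta w_{js}=f_{js}/\nu$ with right-hand sides in $\dot{C}^\infty(X)$.

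Next, I would invert $\Delta$ on the $\phi$-manifold $X$ using the Mazzeo--Melrose $\phi$-calculus \cite{MM_FB,GreenBook}: $\Delta$ is $\phi$-elliptic of order $2$ and Fredholm on weighted $\phi$-Sobolev spaces $\rho^\delta H_\phi^k(X)$ outside a discrete set of indicial weights. Decomposing the data and the prospective solution into Fourier modes of the circle-action on a collar of $\p X$, the non-invariant ($n\neq 0$) modes are governed asymptotically by a massive Laplacian $\Delta_{\RR^3}+n^2$ on the base, whose inverse produces solutions with all derivatives $O(\rho^\infty)$; the circle-invariant zero mode is governed asymptotically by the flat base Laplacian $\Delta_{\RR^3}$, whose Newton-kernel inverse maps rapidly decaying data to functions with polyhomogeneous expansion starting at $\mathrm{const}\cdot|x|^{-1}=O(\rho)$ and proceeding in integer powers of $\rho$. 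Surjectivity on rapidly decaying data holds because $\Delta_{\RR^3}$ is surjective on Schwartz functions with no compatibility condition, so no cokernel obstructs the inverse on the flat model. In this way I obtain a solution $w = O(\rho)$ with essentially basic leading asymptotics, modulo terms of order $O(\rho^\infty)$.

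Finally, setting $u_j = \rd^*(w_{jk}\omega_k) = -\ast(\rd w_{jk}\wedge\omega_k)$ (the second equality using that each $\omega_k$ is parallel, hence closed and coclosed), I would check that $u\in\rho^2\Omega^1_{\phi,\eb}(X)\otimes\RR^3$: since $T^*_\phi X$ near $\p X$ is locally spanned by $\rd\rho/\rho^2$, $\rd y/\rho$ and $\rd\theta$, each differentiation of an $O(\rho)$ function produces a $1$-form of $\phi$-norm $O(\rho^2)$; and the essentially basic property of $u$ follows from that of $w$ together with the approximately Gibbons--Hawking form of the $\omega_k$ guaranteed by the strongly ALF condition (Def.~\ref{strongALF}), since an explicit computation of $\ast(\rd w\wedge\omega_k)$ in adapted local coordinates on the circle bundle shows that its $\p_\theta$-contraction and Lie derivative are both $O(\rho^\infty)$. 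The principal obstacle is the precise setup of the Mazzeo--Melrose Fredholm theory on this strongly ALF geometry---identifying the indicial roots of $\Delta$, establishing the correct weighted-space framework in which $\Delta$ is surjective with polyhomogeneous solutions, and lifting the flat-model Newton-kernel solution to $X$ without developing an obstruction from harmonic functions of borderline growth---after which the decay and gauge assertions are matters of bookkeeping.
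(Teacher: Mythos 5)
Your reduction to a scalar Poisson problem is essentially the route the paper takes: both you and the paper seek $u = D^*(\cdot)$, use the Weitzenb\"ock identity and the parallelism of the triple to reduce $DD^*$ to uncoupled scalar Laplacians, and invoke the $\phi$-calculus Fredholm theory for the Laplacian on a strongly ALF space (the paper's Theorem~\ref{t11.28.3.20}).  Your observation that one may set the $\Lambda^0$-component to zero and restrict to trace-free symmetric $w$, cutting the system to five scalar equations, is a clean and correct refinement.  The decay and regularity bookkeeping ($w \in \rho C^\infty_{\ei}$ implies $u = D^*(\cdot) = O(\rho^2)$ essentially invariant) is also correct.

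However, there is a genuine gap in the final paragraph.  You assert that $u_j = -*(\rd w_{jk}\wedge\omega_k)$ is \emph{essentially basic}, i.e.\ that $\iota_{\p_\theta}u_j = O(\rho^\infty)$, claiming this follows by an explicit computation in the asymptotic Gibbons--Hawking chart.  It does not.  In that chart the paper's own formula \eqref{e11.24.3.20} for $D^*$ acting on an invariant $\phi=(0,\phi_j\omega_j)$ gives the component of $D^*\phi$ along $\alpha$ as $\frac{\ve}{h}(\p_1\phi_1 + \p_2\phi_2 + \p_3\phi_3)$.  Applied with $\phi_k = w_{jk}$, the $\alpha$-component of $u_j$ is proportional to the three-dimensional divergence $\p_k w_{jk}$.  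Since $w \sim c/|x| + O(\rho^2)$ (multipole expansion), this divergence is generically only $O(\rho^2)$, \emph{not} $O(\rho^\infty)$.  So $u$ is essentially invariant but fails to be essentially basic, which is what \eqref{e11.25.3.20} asserts and what Proposition~\ref{iteration1} later needs.  The paper closes this gap with an additional correction that your proof omits: by Lemma~\ref{l2.17.1.20}, replacing $u$ by $u + \iota_v\omega$ for any vector field $v$ does not change $Q(\omega,\rd u)$; choosing $v = u_{00,j}\p_{x_j}$, where $u_{00}$ is the triple of $\alpha$-components of $u$, one checks $\iota_v\omega = -u_{00}\alpha + (\text{terms in }\rd x_j)$, which cancels the offending $\alpha$-component and makes the corrected $u$ essentially basic.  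This gauge step must be added to your argument.
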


\begin{proof}  
 As in \S\ref{triple_sec}, regard $f$ as a section of
 $(\Lambda^0\oplus \Lambda^2_+)\otimes \RR^3$ using the triple to
 identify $\Lambda^2_+$ with the product $\RR^3$-bundle over $X$ and
 $\sum \omega_i^2$ to trivialize $\lambda$.   If
 we solve $DD^*\phi = f$ where $D = \rd^* + \rd_+$ is the Dirac
 operator as in \eqref{e7.30.9.19}, then $u_0 = D^*\phi$ will solve
 $Du_0 = f$.   Now the bundle $(\Lambda^0\oplus \Lambda^2_+)$ can also
 be identified with the product $\RR^4$-bundle over $X$, again using
 the triple $\omega$.  Thus $DD^*$ is just
 $12$ copies of the scalar laplacian and each component of
 $DD^*\phi = f$ can be solved for
 \begin{equation}
 \phi \in  \rho C^{\infty}_{\ei}(X, (\Lambda^0\oplus
 \Lambda^2_+)\otimes \RR^3
 \eee
by invoking Theorem~\ref{t11.28.3.20}.
Then $u_0 = D^*\phi$ is $O(\rho^2)$, essentially invariant, and $Du_0
=f$, which also implies $Q(\omega,\rd u_0) =f$.

In order to get an essentially basic solution $u$, we shall find a
vector field $v$, supported near $\p X$, such that 
\begin{equation}\label{e12.17.11.20}
u = u_0 + \iota_v\omega
\end{equation}
is essentially basic.  By Lemma~\ref{l2.17.1.20}, we shall still have
$Q(\omega,\rd a) =f$.   In an asymptotic Gibbons--Hawking chart with local
coordinates $(x,\theta)$, write
$$
u_0 = u_{00}\alpha + \mbox{terms in }\rd x_j.
$$
Here $u_{00}$ is a triple $(v_1,v_2,v_3)$ of essentially invariant functions defined
near $\p X$: regard this triple as a vector field $v_j\p_{x_j}$
defined near $\p X$, and it is easy to check that
$$
\iota_v \omega = -u_{00}\alpha + \mbox{terms in }\rd x_j.
$$
Then $u$ defined by \eqref{e12.17.11.20} is essentially
basic as required (we use a cut-off function to extend $v$ smoothly
away from the given neighbourhood of $\p X$).
\end{proof}

\subsection{Construction of $\ba$}  

\begin{prop}
\label{iteration1}
Given $\bc$ satisfying \eqref{e1.14.8.19} and \eqref{e2.14.8.19}, there
exists $\ba \in \rho^2\sigma^\infty \Omega^1_{\phi,\eb}(\cW)$ such that 
\begin{equation}\label{e5.14.8.19}
Q(\bom^\chi + \ve \rd \bc + \ve^{N}\rd \ba)  \in \ve^{N+1}F' + \ve^{N+3}G',
\end{equation}
where $F'$ and $G'$ are in the spaces shown in \eqref{e2a.14.8.19}.

Moreover $\ba$ can be chosen to be supported arbitrarily close to 
$\bigcup X_\nu$ (and in particular away from spatial infinity $I_\infty$).
\end{prop}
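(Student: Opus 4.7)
The plan is to reduce $F$ order-by-order at each face $X_\nu$ via the fibrewise linear theory of Theorem~\ref{th_poisson_1}. Since $F$ already vanishes to all orders at $X_{\ad}$ and at $I_\infty$, the only ``bad'' regions are neighborhoods of $\bigcup X_\nu$, and each such $X_\nu$ is itself a strongly ALF hyperK\"ahler $4$-manifold, which is exactly the hypothesis of that theorem.

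First I would restrict $F$ to each face $X_\nu$. Because $F \in \rho^\infty\sigma_I^\infty C^\infty(\cW, S^2_0\RR^3\otimes\lambda)$ and $\partial X_\nu = X_\nu \cap X_{\ad} \subset \{\rho=0\}$, the restriction $f_\nu := F|_{X_\nu}$ lies in $\dot{C}^\infty(X_\nu, S^2_0\RR^3\otimes\lambda|_{X_\nu})$. Each $X_\nu$ equipped with the hyperK\"ahler triple $\omega_\nu$ is strongly ALF (it is $\ol{\TN}$ for $\nu\ge 1$ and $\ol{\AHd}$ for $\nu=0$), so Theorem~\ref{th_poisson_1} produces
\[
u_\nu \in \rho^2 \Omega^1_{\phi,\eb}(X_\nu) \otimes \RR^3, \qquad Q(\omega_\nu,\rd u_\nu) = -\tfrac{1}{2} f_\nu.
\]
Next I would patch these local solutions into a single $\ba$ on $\cW$. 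Using the retraction $r_\nu : U_\nu \to X_\nu$ from Notation~\ref{n_bdf} and a cut-off $\chi_\nu$ supported in $U_\nu$ and equal to $1$ in a smaller neighborhood of $X_\nu$, set
\[
\ba \;:=\; \sum_{\nu=0}^{k} \chi_\nu\, r_\nu^* u_\nu .
\]
Then $\ba$ is smooth on $\cW$, essentially basic, supported arbitrarily close to $\bigcup X_\nu$ (and hence vanishing identically near $I_\infty$), and inherits the $\rho^2$ vanishing at $X_{\ad}$ from the $u_\nu$.

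The cancellation then comes from the Taylor expansion
\begin{align*}
Q(\bom^\chi+\ve\rd\bc+\ve^N\rd\ba)
&= \ve^N F + \ve^{N+3} G + 2\ve^N Q(\bom^\chi,\rd\ba) \\
&\quad + 2\ve^{N+1} Q(\rd\bc,\rd\ba) + \ve^{2N} Q(\rd\ba,\rd\ba).
\end{align*}
Since $\bom^\chi|_{X_\nu} = \omega_\nu$ by \eqref{e1.3.4.20} and $(r_\nu^* u_\nu)|_{X_\nu} = u_\nu$,
\[
\bigl(F + 2 Q(\bom^\chi, \rd\ba)\bigr)\big|_{X_\nu} \;=\; f_\nu + 2 Q(\omega_\nu, \rd u_\nu) \;=\; 0 ,
\]
so $F + 2 Q(\bom^\chi, \rd\ba) = O(\sigma_\nu)$ near $X_\nu$. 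Using $\sigma_\nu = \ve$ in $\Omega_\nu = U_\nu \setminus V$ (Notation~\ref{n_bdf}(4)), the combined term becomes $O(\ve^{N+1})$ there. The cross term $\ve^{N+1} Q(\rd\bc, \rd\ba)$ is already of the right order by $\bc \in \sigma_I^2 \Omega^1_{\phi,\eb}$, and the purely quadratic term $\ve^{2N} Q(\rd\ba)$ absorbs into $\ve^{N+3} G'$ since $N \ge 3$ by induction (the base case being Proposition~\ref{p1.17.1.20}).

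The main obstacle will be the bookkeeping of decay rates at the corners $X_\nu \cap X_{\ad}$, where the two singular scales interact: one must check that the $\rho^2$ vanishing of $u_\nu$, its essentially basic character, and the $\rho^\infty \sigma_I^\infty$ behavior of $F$ together ensure the new error decomposes as $\ve^{N+1} F' + \ve^{N+3} G'$ with $F' \in \rho^\infty\sigma_I^\infty C^\infty$ and $G' \in \sigma^\infty C^\infty_{\ei}$. In particular, $\rd_\pi$ applied to $r_\nu^* u_\nu$ must preserve enough of the essentially basic and rapidly decaying structure at the corner for the target decomposition to survive; here the precise form of the retractions $r_\nu$ (covering the rescaling $(x,\ve) \mapsto (x-p_\nu)/\ve$) and the observation that $\rd_\pi$ preserves essentially basic forms are what make everything fit together.
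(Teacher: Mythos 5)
Your proposal is correct and follows the same strategy as the paper: restrict the error to each $X_\nu$, solve the linearized equation fibrewise via Theorem~\ref{th_poisson_1}, patch using cut-offs and the retractions $r_\nu$, and absorb the commutator and quadratic terms into $\ve^{N+1}F'+\ve^{N+3}G'$. The only notational difference is your consistent use of the polarized $Q(\cdot,\cdot)$ with the factor of $2$ (hence $Q(\omega_\nu,\rd u_\nu)=-\tfrac12 f_\nu$), which the paper silently absorbs into its convention; and the corner bookkeeping you flag as the main obstacle is precisely the step the paper makes explicit via \eqref{l1.3.4.20} together with the observations that $\rd\chi(\sigma_\nu)=O(\rho\sigma_\nu^\infty)$ and $a_\nu = O(\rho^2)$, yielding the $O(\ve^{N+3}\sigma^\infty)$ bound on the cut-off error.
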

\begin{proof}

Let us write $\bom' = \bom^\chi + \ve \rd \bc$.  If
\begin{equation}\label{e1.15.3.20}
\ba \in \rho^2\sigma_I^2\Omega^1_{\phi,\eb}(\cW)
  \end{equation}
  then we calculate
  \begin{equation}\label{e1.24.3.20}
    Q(\bom' + \ve^N\rd \ba) = Q(\bom') +
    \ve^N Q(\bom',\rd \ba) + \ve^{2N}Q(\rd \ba,\rd \ba).
  \end{equation}
  Since $\ba$ is essentially basic, the third term on the RHS is
$O(\ve^{2N}\rho^\infty)$.  Since the correction term $\bc$ is also
essentially basic, the second term on the RHS differs from
$\ve^{N}Q(\bom_\nu,\rd \ba)$ by $O(\ve^{N+1}\rho^{\infty})$ in each
collar neighbourhood $U_\nu$.   Thus, using $F'$ and $G'$ to denote
elements of the spaces \eqref{e2a.14.8.19} that are allowed to vary
from line to line, \eqref{e1.24.3.20} can be rewritten
\begin{equation}\label{e1.25.3.20}
Q(\bom' + \ve^N\rd \ba) = \ve^N F' + \ve^NQ(\bom_\nu, \rd \ba) + \ve^{N+3}G'
  \end{equation}
  in each $U_\nu$.
  
This equation has a well-defined leading term at $X_\nu$ obtained by
dividing by $\ve^N$ and taking the limit $\sigma_\nu \to 0$.   The
leading term, $f_\nu$ say, of $F'$ at $X_\nu$ is equal to the leading
term of $Q(\bom')$ at $X_\nu$.  Thus 
the leading term of the RHS of \eqref{e1.25.3.20} at $X_{\nu}$ is
\begin{equation}
  f_\nu + Q(\omega_\nu,\rd \ba|X_\nu).
  \end{equation}
Now $f_\nu$ is $O(\rho^\infty)$ on $X_{\nu}$ so by 
Theorem~\ref{th_poisson_1}, there exists a solution $a_\nu\in
\rho^2\Omega^1_{\phi,\eb}(X_\nu)\otimes \RR^3$ so that
\begin{equation}
  Q(\omega_\nu,\rd a_\nu) = - f_\nu
\end{equation}
on $X_\nu$.  Define $\ba$ in $U_\nu$ to be
$\chi(\sigma_\nu)r^*_\nu(a_\nu)$, $r_\nu$ from Notation~\ref{n_bdf}.  As the neighbourhoods $U_\nu$
are pairwise disjoint, we may regard this as defining $\ba$ over the
whole of $\cW$, and so defined, $\ba$ is supported in the union of the
$U_\nu$.  We claim that $\ba$ satisfies
\eqref{e5.14.8.19}.   

In $U_\nu$, we have, from \eqref{e1.25.3.20},
\begin{eqnarray}
  Q(\bom' + \ve^{N}\rd \ba) &=& \ve^N(f_\nu +
                                Q(\bom^{\chi},\rd(\chi(\sigma_{\nu}) r^*_\nu(a_\nu))
+ \ve^{N+1}F' + \ve^{N+3}G' \\
  &=& 
              \ve^NQ(\bom^\chi,\rd\chi(\sigma_{\nu})\wedge
      r_\nu^*a_\nu) 
      +    \ve^{N+1}F'     + \ve^{N+3}G',
\end{eqnarray}
by choice of $f_{\nu}$.  Because $\chi(\sigma_{\nu})$ is identically
equal to $1$ on $X_{\nu}$, $\rd \chi(\sigma_{\nu}) =
O(\rho\sigma^{\infty}_{\nu})$, using \eqref{l1.3.4.20}).  Since
$a_\nu = O(\rho^2)$,  the first term on the RHS is
$O(\sigma^\infty\ve^N\rho^3) = O(\ve^{N+3}\sigma^\infty)$. Hence this
term can be absorbed by the $\ve^{N+3}G'$ term, and \eqref{e5.14.8.19}
is proved.
\end{proof}

\subsection{Construction of $\bbb$:  linear theory}

In this section we summarize the linear theory for the Laplacian of
the adiabatic family of metrics $g_{\ve}$.   By a straightforward
calculation, for
\begin{equation}\label{e3.25.3.20}
  g =  h_{\ve}\,\frac{|\rd x|^2}{\ve^2} + h^{-1}_{\ve}\alpha^2,
\end{equation}
we have
\begin{equation}\label{e2.7.8.18}
\Delta_{g_{\ve}} = \ve^2h^{-1}\wt\Delta_0 - h \p_\theta^2,
\end{equation}
where $\Delta_0$ is the laplacian of the (unrescaled) euclidean metric,
$\wt{\Delta}_0$ is its horizontal lift and $\p_\theta$ denotes the
generator of the circle action.  As an aside, if in local coordinates,
$$
\alpha = \rd\theta + \sum a_j\rd x_j
$$
then
$$
\wt{\Delta}_0 = -\sum \nabla_j^2,\mbox{ where }
\nabla_j = \frac{\p}{\p x_j} - a_j\frac{\p}{\p\theta}.
$$

In particular, if $u$ is invariant, then regarding it without change
of notation as a function on $\RR^3$, we have
\bee\label{e101.24.3.20}
\Delta_{g_{\ve}}u = \ve^2h^{-1}\Delta_0 u.
\eee

Recall that $D = \rd^*+\rd_+$,
$$
D : \Omega^1 \longrightarrow \Omega^0\oplus\Omega^2_+.
$$
As in \S\ref{triple_sec}, on a hyperK\"ahler $4$-manifold $M$, $\Lambda^2_+$ has a flat
orthonormal trivialization by a hyperK\"ahler triple $(\omega_j)$.  Using this trivialization,
if
$$
\phi = (\phi_0, \phi_j\omega_j) \in  \Omega^0\oplus\Omega^2_+
$$
then $DD^*$ acts as the scalar Laplacian on the coefficients
$(\phi_0,\ldots, \phi_3)$.

We shall need the formula for $D^*$ for the metric
\eqref{e3.25.3.20}, acting on invariant functions.  A simple
calculation gives that if
$$
D^*\phi = w_0 e_0 + w_j e_j,
$$
where
$$
e_0 = \frac{\alpha}{\sqrt{h}},\;\; e_j = \sqrt{h}\,\frac{\rd x_j}{\ve},
$$
then
\begin{equation}\label{e11.24.3.20}
\begin{bmatrix} w_0 \\ w_1 \\ w_2 \\w_3 \end{bmatrix} = 
\frac{\ve}{\sqrt{h}}\begin{bmatrix} 0 & \p_1 & \p_2 & \p_3 \\ 
-\p_1 & 0 &-\p_3 & \p_2 \\ 
-\p_2 & \p_3 & 0 &-\p_1 \\
-\p_3 & -\p_2 & \p_1 & 0
\end{bmatrix} 
\begin{bmatrix} \phi_0 \\ \phi_1 \\ \phi_2 \\ \phi_3 \end{bmatrix}.
\end{equation}

(One can verify $DD^* = \Delta_g$ directly from this formula and its adjoint.)

In the next theorem, we write $\bom_{\ad}$ for the lift of the hyperK\"ahler
triple of \eqref{e3.25.3.20} to a collar neighbourhood $V$ of
$X_{\ad}$ in $\cW$. The notation for boundary defining functions is as
in paragraph \ref{n_bdf}.

\begin{thm}
  Let
  \bee \label{e5.25.3.20}
  G \in  \sigma^{\infty}C^\infty_{\ei}(V,S^2_0\RR^3\otimes \lambda)
  \eee
  be an essentially invariant section.  Then there exists
  \bee\label{e6.25.3.20}
  \bbb \in \sigma^2_I\Omega^1_{\phi,\eb}(V)\otimes \RR^3
  \eee
such that
\bee\label{e21.25.3.20}
  Q(\bom_{\ad},\rd \bbb) = \ve G + O(\ve^\infty\sigma_I^\infty)
 \eee
in $V$.
\label{poisson_2}\end{thm}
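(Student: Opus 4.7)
The approach parallels Theorem~\ref{th_poisson_1}, adapted to the adiabatic setting. I use $\bom_{\ad}$ to trivialize both $\Lambda^2_+(g_\ve)$ and the relative density bundle $\lambda$ (via $\nu=\tfrac13\sum_j\bom_{\ad,j}^2$). Under these identifications the linear operator $\bbb\mapsto Q(\bom_{\ad},\rd\bbb)$ becomes, up to a nonsingular multiplier, the traceless-symmetric part of $\bbb\mapsto \rd_+\bbb$ in the HK trivialization, with the same principal symbol as the Dirac operator $D=\rd^*+\rd_+$ of $g_\ve$. I therefore seek $\bbb$ in the form $\bbb=D^*\phi$, where $\phi$ is an essentially invariant triple of sections of $\Omega^0\oplus\Omega^2_+$. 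By Lemma~\ref{pertlem}, the equation $Q(\bom_{\ad},\rd\bbb)=\ve G$ then reduces to $DD^*\phi=\ve\wt G$, and on the HK trivialization $DD^*$ acts componentwise as the scalar Laplacian $\Delta_{g_\ve}$, so it suffices to solve twelve scalar equations $\Delta_{g_\ve}\phi_\alpha=\ve\wt G_\alpha$, where $\wt G$ is $G$ expressed through the trivialization.

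Because $\phi$ is essentially invariant, formula \eqref{e101.24.3.20} reduces each scalar equation to a Poisson equation on the resolved base $Y_{\ad}=[\ol{\RR^3};0,P]$,
$$
\Delta_0\phi_\alpha \;=\; \ve^{-1}h_\ve\,\wt G_\alpha.
$$
By hypothesis $G\in\sigma^\infty C^\infty_{\ei}(V)$ with $\sigma=\sigma_0\cdots\sigma_k\sigma_I$, while $\ve=\rho\sigma_0\cdots\sigma_k$; so the singular factor $\sigma_0\cdots\sigma_k$ in $\ve^{-1}$ is absorbed by the infinite-order vanishing of $\wt G$ at the internal blow-up faces over $0$ and each $\pm p_\nu$, and the right-hand side is a smooth family of functions on $Y_{\ad}$ vanishing to infinite order at every boundary face (including the sphere at spatial infinity). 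Standard Newton-potential theory on $Y_{\ad}$, which respects the rapid decay of the source, then produces $\phi_\alpha$ smooth up to $\p Y_{\ad}$, decaying like $|x|^{-1}$---equivalently, of order $\sigma_I$---at spatial infinity and vanishing to infinite order at the internal faces.

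Applying $D^*$, whose horizontal components carry an extra $\ve/\sqrt{h_\ve}\sim\sigma_I$ by \eqref{e11.24.3.20}, gives $\bbb_0:=D^*\phi\in\sigma_I^2\Omega^1_{\phi}(V)\otimes\RR^3$, essentially invariant by construction. To upgrade $\bbb_0$ to essentially basic, I re-use the gauge-vector-field trick from the end of the proof of Theorem~\ref{th_poisson_1}: extract the $\alpha$-component of $\bbb_0$ as an essentially invariant triple of functions $u_0=(v_1,v_2,v_3)$, regard this as the coefficients of a base vector field $v=v_j\p_{x_j}$, and set $\bbb:=\bbb_0+\iota_v\bom_{\ad}$. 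A direct computation in the Gibbons--Hawking frame shows that $\iota_v\bom_{\ad}$ cancels precisely the $\alpha$-component of $\bbb_0$, so $\bbb\in\sigma_I^2\Omega^1_{\phi,\eb}(V)\otimes\RR^3$. By Lemma~\ref{l2.17.1.20}, applied on each fibre $\pi^{-1}(\ve)$ where $\bom_{\ad}$ is honestly hyperK\"ahler, $Q(\bom_{\ad},\rd(\iota_v\bom_{\ad}))=0$, and therefore $Q(\bom_{\ad},\rd\bbb)=Q(\bom_{\ad},\rd\bbb_0)=\ve G$ modulo the essentially-invariant remainders in $\cL_{\p_\theta}\phi$ and $\cL_{\p_\theta}v$, which by definition are $O(\rho^\infty\sigma_I^\infty)=O(\ve^\infty\sigma_I^\infty)$.

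The main obstacle is the Poisson solvability on the resolved base $Y_{\ad}$: one must solve a Poisson equation whose right-hand side carries a singular $\ve^{-1}$ factor and still obtain a solution that is smooth up to every boundary face of $Y_{\ad}$ with the prescribed $\sigma_I$ decay at infinity. Both the absorption of $\ve^{-1}$ and the correct boundary regularity rely critically on the infinite-order vanishing of $\wt G$ at every face of $V$ other than $X_{\ad}$ itself. Tracking the $\sigma_I$ weights through $D^*$ and the gauge correction then yields the $\sigma_I^2$ decay of $\bbb$ and the stated $O(\ve^\infty\sigma_I^\infty)$ error.
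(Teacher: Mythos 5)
Your strategy coincides with the paper's: reduce to scalar Poisson equations via $DD^*$, solve by a Newton potential using \eqref{e101.24.3.20}, apply $D^*$, and gauge-correct to an essentially basic form. The gap is in the weight analysis of the Poisson solution $\phi$. You obtain $\Delta_0\phi=\ve^{-1}h_\ve\wt G$ and assert that the infinite-order vanishing of $\wt G$ absorbs the full $\ve^{-1}$, so that $\phi$ is smooth on $Y_{\ad}$, of order $\sigma_I$ at spatial infinity, and vanishing to all orders at the internal faces. This is not correct. Since $\ve=\rho\sigma_0\cdots\sigma_k$ (Notation~\ref{n_bdf}), while the hypothesis gives vanishing of $G$ in $\sigma_0,\dots,\sigma_k$ and $\sigma_I$ but \emph{not} in $\rho$, only the factor $(\sigma_0\cdots\sigma_k)^{-1}$ is absorbed; the right-hand side of the Poisson equation retains a $\rho^{-1}$ singularity, and so does $\phi$ (and, being a Newton potential, $\phi$ does not in fact vanish at the internal faces either). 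The quantity that is actually smooth is $\ve\phi$, which is exactly what the paper's formula \eqref{e7.25.3.20} keeps track of.

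That residual $\rho^{-1}$ is cancelled precisely by the $\ve/\sqrt{h_\ve}$ prefactor of $D^*$ from \eqref{e11.24.3.20}: one has $D^*\phi=h_\ve^{-1/2}\cdot(\text{matrix})\cdot\partial(\ve\phi)$, which is smooth because $\ve\phi$ is. Your further claim that $\ve/\sqrt{h_\ve}\sim\sigma_I$ is also false---near $I_\infty$, $h_\ve\to1$ and $\ve$ is comparable to $\rho$, not to $\sigma_I$---so this prefactor supplies no decay at spatial infinity. The $\sigma_I^2$ decay of $\bbb_0$ comes instead from differentiating the Newton potential: $\ve\phi=O(|x|^{-1})=O(\sigma_I)$, hence $\partial_j(\ve\phi)=O(|x|^{-2})=O(\sigma_I^2)$. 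Once the weight accounting is corrected in this way, the remainder of your argument---the gauge-correction by $\iota_v\bom_{\ad}$ via Lemma~\ref{l2.17.1.20} and the reduction of the essentially invariant case to the exactly invariant one---coincides with the paper's proof.
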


\begin{proof}  Suppose that $G$ is exactly $S^1$-invariant. Then we
  may regard $G$ as a function on $V/S^1$.  Because $G$ 
  vanishes to all orders in $\sigma_\nu$ near $X_\nu$, we may regard
  $G$ as a smooth function on $\RR^3\times [0,\ve_0)$, which
  vanishes to all orders in $|x-p|$ at every point $p$ of $P$.

  We solve \eqref{e21.25.3.20} in two stages.  First, the formula
\bee\label{e7.25.3.20}
\bu(x,\ve) = \frac{1}{4\pi\ve}\int
\frac{1}{|x-y|}h_\ve(y)G(y,\ve)\,\rd y
\eee
gives a function on $\RR^3$ such that
\bee\label{e8.25.3.20}
\Delta_{g_{\ve}}\bu(x,\ve) = \ve G.
\eee
Moreover, $\ve \bu(x,\ve)$ is smooth in all variables because the
singularities in $h$ at the points of $P$ are cancelled by the
vanishing of $G$ to all orders at these points. It is also
$O(|x|^{-1})$ for $|x|\to \infty$.

As before, regard $G$ and $\bu$ as sections of $(\Lambda^0\oplus
\Lambda^2_+)\otimes \RR^3$.  Then if $\bbb_0 = 
D^*_{g_\ve}\bu$, we have  $D_{g_{\ve}}\bbb_0 = \ve G$.  
From the formula \eqref{e11.24.3.20}, $\bbb_0$, is initially defined
on $\RR^3\setminus P$ and lifts to a smooth section of
$\Omega^1_{\phi,\ei}(V)\otimes \RR^3$.  Indeed,
near each of the $X_\nu$, $h_\ve = 1 +O(\sigma_\nu)$, where the `$O$' is smooth
for small $\sigma_\nu$.   

We can now correct
$\bbb_0$ exactly as we did in Theorem~\ref{th_poisson_1} to obtain
$\bbb$ satisfying \eqref{e6.25.3.20} and \eqref{e21.25.3.20}.

We started the proof by assuming that $G$ was exactly invariant. If
$G$ is only essentially invariant, we can write $G =
G_0+G_1$ where $G_0$ is exactly invariant and $G_1$ is
$O(\ve^\infty\sigma_I^{\infty})$. Then we apply the previous argument
with $G$ replaced by $G_0$ to obtain the result. 
\end{proof}

\subsection{Construction of $\bbb$}
\label{bbb_construction}
The second half of the inductive step is contained in the following
result:

\begin{prop}
Let $\bc$ and $\ba$ be as in Proposition~\ref{iteration1}.  Then there exists $\bbb$,
supported near $\rho=0$, essentially basic and $O(\sigma^2)$,
so that
\bee\label{e3.24.3.20}
  Q(\bom'  + \ve^N \rd \ba + \ve^{N+2}\rd \bbb) = \ve^{N+1}F'' +
  \ve^{N+4}G'',
\eee
where $F''$ and $G''$ are in the spaces in \eqref{e2a.14.8.19}.
\label{iteration2}\end{prop}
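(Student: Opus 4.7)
The strategy is to eliminate the $\ve^{N+3}G'$ error from Proposition~\ref{iteration1} using Theorem~\ref{poisson_2}, and then to verify that all cross-terms produced by inserting $\ve^{N+2}\rd\bbb$ are absorbed into the allowed error spaces $\ve^{N+1}F''+\ve^{N+4}G''$. Concretely, I would apply Theorem~\ref{poisson_2} with input $G:=-\tfrac12 G'$, which lies in $\sigma^\infty C^\infty_{\ei}(V,S^2_0\RR^3\otimes\lambda)$ by the hypothesis~\eqref{e2a.14.8.19}. This produces $\bbb\in\sigma_I^2\Omega^1_{\phi,\eb}(V)\otimes\RR^3$ satisfying
\[
2\,Q(\bom_{\ad},\rd\bbb) \;=\; -\ve G' + O(\ve^\infty\sigma_I^\infty).
\]
After multiplying by a cutoff in $\rho$, I may regard $\bbb$ as globally defined on $\cW$ and supported near $X_{\ad}$; the vanishing of $G'$ to all orders at every $X_\nu$, transferred through the explicit Newtonian-potential formula~\eqref{e7.25.3.20} in the proof of Theorem~\ref{poisson_2}, yields the required bound $\bbb=O(\sigma^2)$.

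Then I would expand, by bilinearity of $Q$,
\[
Q(\bom'+\ve^N\rd\ba+\ve^{N+2}\rd\bbb) \;=\; Q(\bom'+\ve^N\rd\ba) + 2\ve^{N+2}Q(\bom'+\ve^N\rd\ba,\rd\bbb) + \ve^{2N+4}Q(\rd\bbb,\rd\bbb),
\]
and split $\bom'=\bom_{\ad}+(\bom^\chi-\bom_{\ad})+\ve\rd\bc$ in the collar $V$. The first summand equals $\ve^{N+1}F'+\ve^{N+3}G'$ by the inductive hypothesis. The leading piece of the middle summand is $2\ve^{N+2}Q(\bom_{\ad},\rd\bbb)=-\ve^{N+3}G'+O(\ve^\infty\sigma_I^\infty)$ which cancels the $\ve^{N+3}G'$ term exactly, the remainder $O(\ve^\infty\sigma_I^\infty)$ being harmlessly absorbed into $\ve^{N+1}F''$.

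The key technical observation for treating what is left is that the wedge of two essentially basic $2$-forms is $O((\rho\sigma_I)^\infty)$: decomposing with respect to the fibrations $X_{\ad}\to Y_{\ad}$ and $I_\infty\to Y_\infty$, each such form is $O((\rho\sigma_I)^\infty)$-close to a pullback from the three-dimensional base, and a $4$-form on a $3$-manifold vanishes identically. Applied in turn: $2\ve^{N+3}Q(\rd\bc,\rd\bbb)=O(\ve^{N+3}(\rho\sigma_I)^\infty)$ and $\ve^{2N+4}Q(\rd\bbb,\rd\bbb)=O(\ve^{2N+4}(\rho\sigma_I)^\infty)$ both lie in $\ve^{N+1}F''$; the cross-term $2\ve^{2N+2}Q(\rd\ba,\rd\bbb)$ is controlled similarly, noting that $\ba$ is essentially basic and supported near $\bigcup X_\nu$, so the overlap with $\bbb$ is narrow and in any event the wedge-lemma applies. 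The remaining term $2\ve^{N+2}Q(\bom^\chi-\bom_{\ad},\rd\bbb)$ is treated using Proposition~\ref{p1.17.1.20} together with~\eqref{e22.14.3.20}: $\bom^\chi-\bom_{\ad}=\rd(\chi(\sigma_0)a)$ is $O(\rho^\infty)$ and compactly supported in $V\cap U_0$, a set disjoint from $I_\infty$; hence its contribution is $O(\ve^{N+2}\rho^\infty)$ on a compact set on which $\sigma_I$ is bounded below, and therefore lies in $\ve^{N+1}\rho^\infty\sigma_I^\infty C^\infty(\cW)\subset \ve^{N+1}F''$.

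I expect the main obstacle to be the careful bookkeeping that confirms every residual term genuinely belongs to $\ve^{N+1}F''$ or $\ve^{N+4}G''$, particularly that the essentially invariant structure of the final $G''$ is preserved. The latter is automatic because every ingredient that survives into the $G''$ slot ($\bom_{\ad}$, $\rd\bbb$, $\bom^\chi-\bom_{\ad}$, and the smooth factors involved) is essentially invariant, and this class is preserved under wedge product and the quadratic operation $Q$.
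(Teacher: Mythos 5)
Your proposal follows essentially the same route as the paper's proof: apply Theorem~\ref{poisson_2} to cancel the dominant $\ve^{N+3}G'$ error, cut off by $\chi(\rho)$ to globalize, and then check that all cross-terms produced by inserting $\ve^{N+2}\rd\bbb$ into $Q$ fall into the allowed error spaces. Your bookkeeping of the cross-terms is more explicit than the paper's (the paper compresses this into ``simplifications analogous to those in the proof of Proposition~\ref{iteration1}''); your decomposition $\bom'' = \bom_{\ad} + (\bom^\chi - \bom_{\ad}) + \ve\rd\bc + \ve^N\rd\ba$ and the wedge-of-essentially-basic-forms observation make that step clean, and the treatment of $\bom^\chi - \bom_{\ad}$ using \eqref{e22.14.3.20} together with its compact support away from $I_\infty$ is correct.

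The one genuine flaw is your justification of the $O(\sigma^2)$ bound on $\bbb$. You claim the vanishing of $G'$ to all orders at each $X_\nu$ ``transfers through the explicit Newtonian-potential formula~\eqref{e7.25.3.20}'' to give $\bbb=O(\sigma_\nu^2)$ near each $X_\nu$. This does not work: the Newtonian potential is nonlocal, so $\bu(p_\nu)=\frac{1}{4\pi\ve}\int\frac{1}{|p_\nu-y|}h_\ve(y)G(y)\,\rd y$ is generically nonzero even though $G$ vanishes to all orders at $p_\nu$, and hence $\bbb_0=D^*\bu$ need not vanish at $X_\nu$. What Theorem~\ref{poisson_2} actually delivers, and what the paper's own proof uses, is $\bbb\in\sigma_I^2\Omega^1_{\phi,\eb}(V)\otimes\RR^3$, i.e.\ $O(\sigma_I^2)$ decay at spatial infinity and smoothness (but not forced vanishing) at the other corners. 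That $\sigma_I^2$ decay is all the inductive step \eqref{e3.14.8.19} actually requires for $\bc'$ to remain in the space $\sigma_I^2\Omega^1_{\phi,\eb}(\cW)\otimes\RR^3$; the stated $O(\sigma^2)$ in the proposition appears to be an over-statement that the paper's proof does not establish either. So you should drop the Newtonian-potential argument and invoke the $\sigma_I^2$ conclusion of Theorem~\ref{poisson_2} directly.
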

\begin{proof}
  Let us write $\bom'' = \bom' + \ve^N\rd \ba$.  If we calculate the LHS
  of \eqref{e3.24.3.20} in $V$, assuming that $\bbb$ is essentially
  basic in $V$, our collar neighbourhood of $X_{\ad}$,
  we obtain
  \begin{equation}\label{e32.25.3.20}
    Q(\bom'' + \ve^{N+2}\rd\bbb) = Q(\bom'') +
    \ve^{N+2}Q(\bom'',\rd\bbb) + O(\ve^{2N+4}\rho^\infty),
  \end{equation}
  where as before the last term vanishes to all orders in $\rho$
  because $\bbb$ is essentially basic.  Using $F''$ and $G''$ for
  generic functions in the spaces \eqref{e2a.14.8.19} which may vary
  from line to line, simplifications analogous to those in the proof
  of Proposition~\ref{iteration1} yield
\begin{equation}\label{e31.25.3.20}
    Q(\bom'' + \ve^{N+2}\rd\bbb) = \ve^{N+1}F'' + \ve^{N+3}G'' +
    \ve^{N+2}Q(\bom_{\ad},\rd\bbb).
  \end{equation}
Now $G''$ satisfies the hypotheses of $G$ in Theorem~\ref{poisson_2} so
there exists $\bbb$ as in \eqref{e6.25.3.20} and satisfying
\eqref{e21.25.3.20} (with $G$ replaced by $-G'$).   In order to extend
$\bbb$ to $\cW$, we must replace it by $\chi(\rho)\bbb$.  Then
\begin{eqnarray}
  Q(\bom'' + \ve^{N+2}\rd(\chi(\rho)\bbb)) &=& \ve^{N+1}F' +
                                               \ve^{N+4}G'' +
  \ve^{N+2}Q(\bom_{\ad}, \rd\chi\wedge \bbb).
\end{eqnarray}
The last term is $O(\rho^\infty)$ because $\chi$ is identically $1$
near $X_{\ad}$ and $\bbb$ is smooth near each $X_\nu$. Thus this last
term can be absorbed into $\ve^{N+1}F'$, yielding \eqref{e3.24.3.20}.
\end{proof}

\subsection{Completion of proof of Theorem~\ref{formal_thm}}

The inductive argument given in
\S\S\ref{ba_construction_linear}--\ref{bbb_construction} shows that
there is a solution $\wh{\bze}$ of \eqref{e1.17.1.20} in formal power
series in $\ve$.  However, it is well known that given such a formal power
series, there exists smooth $\bze$ whose derivatives at all boundary
hypersurfaces agree with those of $\wh{\bze}$ (Borel's Lemma).  This
observation completes the proof of Theorem~\ref{formal_thm}.

\begin{rmk}
Our proof yields rather more than what is stated in
Theorem~\ref{formal_thm}.  From the construction we have seen that
$\bze-\bom^\chi$ is smooth,
essentially basic and $O(\ve\rho^2\sigma^2_I)$ on $\cW$.
\end{rmk}

  \section{Completion of Proof}

\label{completion}

To complete the proof of our main theorem, we need to modify $\bze$ in
Theorem~\ref{formal_thm} by a triple $\ba \in
\Omega^1_{\phi}(\cW)\otimes \RR^3$, say, so that
\begin{equation}
Q(\bze + \rd \ba) = 0
\end{equation}
on (the fibres of) $\cW$.  

This is an application of the implicit function theorem, uniformly on
the fibre $\pi^{-1}(\ve)$, for $\ve>0$.  The key step is the uniform
invertibility result for the linearization, Theorem~\ref{t1.1.4.20},
below. To have this invertibility, we shall need to use the freedom to
choose $\ve_0$ to be very small.

Let us agree to denote by $\bg_\zeta$ the metric on $T_\phi(\cW/I)$
determined by the symplectic triple $\bze$, and by $\Delta_{\zeta}$
the associated Laplacian.

We shall use the reformulation $\ba
= D^*_{\zeta} \bu$ discussed in \eqref{e12.2.4.20}--\eqref{e6.26.3.20}
to reduce our work to the study of the {\em scalar} Laplacian $\Delta_{\zeta}$ of $\bg_{\zeta}$.

\subsection{(Fibrewise) differential operators on $\cW$}

The $\phi$-vertical tangent bundle $T_\phi(\cW/I)$ was introduced in
Definition~\ref{def_rescaled}.  Denote by $\cV_{\phi}(\cW/I)$ the space of
smooth sections of $T_\phi(\cW/I)$.    This space of vector fields is used to
define the relevant space of differential operators on $\cW$:

\begin{dfn}\label{d2.26.3.20} The space $\Diff^m_{\phi}(\cW/I)$ is the
space of differential operators which are polynomial (of degree $\leq
m$) in the vector fields from $\cV_\phi(\cW/I)$ with smooth
coefficients.  The space $\Diff^m_{\bo}(\cW/I)$ is the set of
differential operators on $\cW$ which are polynomial in
$\cV_{\bo}(\cW/I)$,  the $\bo$-vector
fields on $\cW$ that are tangent to the fibres of $\pi$.
\end{dfn}

These definitions also make sense for differential operators acting
between sections of vector bundles over $\cW$.  From the definitions, we
see that $(\rho\sigma_I)^m\Diff^m_{\bo}(\cW/I) \subset
\Diff^m_{\phi}(\cW/I)$. 

\begin{ex} We have already seen the relative exterior
derivative $\rd_\pi$ as an example of an operator in 
$\Diff^1_{\phi}(\cW/I)$. 

  If $\bg$ is a smooth metric on $T_\phi(\cW/I)$, then the Laplacian
  $\Delta_{\bg}$ is an operator in $\Diff^2_{\phi}(\cW/I)$.
\end{ex}

\begin{dfn} A function $f$ on $\cW$ is said to be {\em essentially
    invariant} if
\bee\label{e61.5.4.20}
  \frac{\p f}{\p \theta} = O((\rho\sigma_I)^\infty)
 \eee
on $\cW$. 
The definition makes sense for a given choice of $S^1$-action near
$X_{\ad}\cup I_\infty$.   The subclass $\Diff^m_{\phi,\ei}(\cW/I)$
consists of those operators in $\Diff^m_{\phi}(\cW/I)$ with
essentially invariant coefficients.
\end{dfn}

\subsection{Function spaces}

Let $\cW$ be as before.  Pick a smooth $\bo$-density on $\cW$ and
introduce the space $L^2_{\bo}(\cW)$ and the Sobolev spaces $H^m_{\bo}(\cW)$,
\begin{equation}
  H^m_{\bo}(\cW) = \{u\in L^2_{\bo}(\cW) :  Pu \in L^2_{\bo}(\cW) \mbox{ for
    all }P \in \Diff^m_{\bo}(\cW)\}.
\end{equation}

Let $H^{n,m}_{\phi,\bo}(\cW)$ be the space of functions $u$ such that $Pu \in
H^m_{\bo}(\cW)$ for all $P \in \Diff^n_{\phi}(\cW/I)$.  Then by
definition, $P$ extends to define a bounded map $H^{n,m}_{\phi,\bo} \to
H^m_{\bo}$ for any $m$.  Define also $H^{n,m}_{\bo,\bo}(\cW)$ to be the
space of $u$ such that $Qu \in H^m_{\bo}(\cW)$ for all
$Q\in\Diff^n_{\bo}(\cW/I)$.

In order to deal with the Laplacian, we shall need to split off the
$S^1$-invariant component of elements in these 
Sobolev spaces.  This only makes sense near $X_{\ad}\cup I_\infty$,
which means that our definitions are a little complicated.  Recall
that $V$ is a fixed tubular neighbourhood of $X_{\ad}$.
\begin{dfn}
  Let $\beta > \alpha+2$, $\alpha>0$, and fix a bump function
  $\chi(t)=1$ for $t\leq 1/2$ and equal to zero for $t\geq 1$.  Define  
  \bee\label{e1.26.3.20}
  \cR_{\alpha,\beta,m}(\cW) = \{\chi(\rho/\delta)f_0 + f_1\}\mbox{ where }
f_0 \in (\rho\sigma_I)^{\alpha+2}H^m_{\bo}(V)\mbox{ and }\frac{\p 
    f_0}{\p\theta} =0,\; f_1 \in (\rho\sigma_I)^\beta H^m_{\bo}(\cW). 
  \eee 
\label{d3.26.3.20}\end{dfn}
Since $\alpha+2<\beta$, this allows for the zero-Fourier mode $f_0$
to be larger than the non-zero Fourier modes.  In practice, we shall
take $\alpha \in (0,1)$ and $\beta$ can be as large as we like.  The
space just defined will serve as a {\em range space} for the
Laplacian.  The definition of the domain is similar:

\begin{dfn} With $\alpha$ and $\beta$ as in
  Definition~\ref{d3.26.3.20}, let
\bee\label{e2.26.3.20}
\cD_{\alpha,\beta,m+2}(\cW) = \{\chi(\rho/\delta)u_0 + u_1\}\mbox{ where }
u_0 \in (\rho\sigma_I)^{\alpha}H^{2,m}_{\bo,\bo}(V) \mbox{ and }
 \frac{\p u_0}{\p\theta} = 0,\; u_1 \in 
 (\rho\sigma_I)^\beta H^{2,m}_{\phi,\bo}(\cW).
\eee
\end{dfn}

The Laplacian of a smooth, essentially invariant metric on
$T_\phi(\cW/I)$ extends to define a bounded linear map 
$$\cD_{\alpha,\beta,m+2}(\cW)\to \cR_{\alpha,\beta,m}(\cW)
$$
for every $m$   and $\beta > \alpha+2$.

The main linear result to be given in this section is the
invertibility of the Laplacian between these spaces.

\begin{thm}\label{t1.1.4.20}   Let $\bg_\zeta$ be the metric on
  $T_\phi(\cW/I)$ determined by the symplectic triple $\bze$ on $\cW$, and
  let $\Delta_\zeta$ be the associated Laplacian.  Fix
$\alpha\in (0,1)$ and $\beta >   \alpha+2$ and $m$.  Then there exists
$\ve_0>0$ so that with $\cW = \pi^{-1}[0,\ve_0)$,
  \bee\label{e111.5.4.20}
  \Delta_\zeta : \cD_{\alpha,\beta,m+2}(\cW) \longrightarrow \cR_{\alpha,\beta,m}(\cW)
\eee
is invertible.
\end{thm}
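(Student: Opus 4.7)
The plan is to invert $\Delta_\zeta$ by a parametrix construction in the $\phi$-calculus framework underlying $\cW$, separating the $S^1$-invariant and non-invariant Fourier modes (which is precisely the purpose of the splittings $\chi(\rho/\delta)f_0+f_1$ and $\chi(\rho/\delta)u_0+u_1$ in the definitions of $\cR_{\alpha,\beta,m}$ and $\cD_{\alpha,\beta,m+2}$), and then promoting Fredholm information to invertibility via a smallness-in-$\ve$ argument.

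First, near $X_{\ad}\cup I_\infty$ I would decompose functions into Fourier modes with respect to the $S^1$-action generated by $\p_\theta$. By \eqref{e2.7.8.18}, on the $n$-th Fourier mode the Laplacian takes the schematic form $\ve^2 h^{-1}\wt\Delta_0+hn^2$. For $n\neq 0$, the positive mass $hn^2$ gives uniform coercivity, so standard $\phi$-calculus Fredholm theory together with direct integration by parts shows that $\Delta_\zeta$ is invertible on the $(\rho\sigma_I)^\beta H^{2,m}_{\phi,\bo}(\cW)$ component, uniformly in $\ve$, for any $\beta>\alpha+2$. On the zero mode the operator degenerates to $\ve^2 h^{-1}\wt\Delta_0$; since $\ve=\rho\sigma_I\sigma_0\cdots\sigma_k$ and in the bulk of $V$ the $\sigma_\nu$ are $O(1)$, the factor $\ve^2$ reproduces precisely the $(\rho\sigma_I)^2$ weight shift between $\cD$ and $\cR$ on the invariant component.

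Next I would build a right parametrix $Q_0:\cR_{\alpha,\beta,m}\to\cD_{\alpha,\beta,m+2}$ by patching model parametrices at the boundary hypersurfaces of $\cW$. At each $X_\nu$ the metric restricts to the strongly ALF $g_\nu$, whose scalar Laplacian is invertible on weighted $\phi$-Sobolev spaces by Theorem~\ref{t11.28.3.20}. At $X_{\ad}$ the model on the zero mode is, after factoring out $\ve^2$, the euclidean Laplacian on the resolved base $Y_{\ad}=[\ol{\RR^3};0,P]$; its spherically symmetric indicial roots at every blown-up point and at spatial infinity are $0$ and $-1$, and the restriction $\alpha\in(0,1)$ is designed precisely to keep $-\alpha$ strictly between these roots, yielding an inverse at weight $\alpha$ with range at weight $\alpha+2$. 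At $I_\infty$ a standard scattering-type model handles the remaining piece. These local models are mutually compatible at the corners $X_{\ad}\cap X_\nu$, since the adiabatic rescaling $x'_\nu=(x-p_\nu)/\ve$ built into $\cW$ is exactly what matches the euclidean Laplacian on $Y_{\ad}$ near $p_\nu$ with the Taub--NUT Laplacian near spatial infinity of $X_\nu$. Patching by cut-offs and summing yields $\Delta_\zeta Q_0=I-K_0$ with $K_0$ a compact error made of commutator terms supported near the cut-off transition regions.

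Finally, to get genuine invertibility I would argue that $\Delta_\zeta$ is injective on $\cD_{\alpha,\beta,m+2}$: any $u$ in the kernel decays rapidly enough at every boundary hypersurface of each fibre to justify integration by parts fibrewise, forcing $u$ to be constant on each connected fibre of $\pi$, and the weight $(\rho\sigma_I)^\alpha$ with $\alpha>0$ excludes nonzero constants. Surjectivity, and hence invertibility, follows once $\|K_0\|<1$; because the local models, cut-offs and error terms all depend smoothly on $\ve$ down to $\ve=0$ inside the $\phi$-calculus on $\cW$, the commutator terms in $K_0$ acquire an extra factor of $\ve^0$ from the derivatives of the cut-offs but are supported where the smallness of the perpendicular scale forces $K_0$ to be small; shrinking $\ve_0$ therefore makes $\|K_0\|$ arbitrarily small and completes the argument. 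The main obstacle I foresee is the bookkeeping at the corner $X_{\ad}\cap X_\nu$, where the parametrix assembled from the $\ve^2$-rescaled $Y_{\ad}$-model on the adiabatic side must match that built from the full ALF $X_\nu$-model in a way that is uniform in $\ve$; the rescaled tangent bundle $T_\phi(\cW/I)$ and the choices in Notation~\ref{n_bdf} are engineered to make this compatibility hold, but squeezing out the quantitative estimates at the corner that feed into the smallness of $K_0$ is where the real technical work lies.
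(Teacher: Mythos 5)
Your overall architecture is the right one and does match the paper's: localize by cut-offs to neighbourhoods of the boundary hypersurfaces, invert the limit Laplacians $\Delta_{g_\nu}$ on the $X_\nu$ and an adiabatic model on $V$ (the latter by Fourier decomposition in $\theta$, with the zero mode handled by the rescaled Euclidean Green's operator on $\RR^3$ and the nonzero modes by explicit massive Green's functions), and then sum the patched parametrices to get $\Delta_\zeta\bG = 1 - e$ with $\|e\|<1$. The paper also closes the argument by a Neumann series rather than by Fredholm theory, so the compactness of $K_0$ you invoke is neither established nor needed --- only the operator-norm bound matters.

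The genuine gap is in your treatment of the patching error near the corners $X_{\ad}\cap X_\nu$. You assert that the commutator terms \emph{``acquire an extra factor of $\ve^0$ from the derivatives of the cut-offs but are supported where the smallness of the perpendicular scale forces $K_0$ to be small; shrinking $\ve_0$ therefore makes $\|K_0\|$ arbitrarily small.''} That step fails: with an ordinary cut-off $\chi(\sigma_\nu/\delta)$ the commutator $[\Delta_{g_\nu},\chi_\nu]G_\nu$ has operator norm $O(1)$ as a map $\cR_{\alpha,\beta,m}\to\cR_{\alpha,\beta,m}$, \emph{uniformly} in both $\delta$ and $\ve_0$. The point is that $G_\nu$ gains exactly two powers of $\rho$ (from $\rho^{\alpha+2}$ in the range to $\rho^\alpha$ in the domain), and the first-order coefficient $\nabla\chi_\nu$ already has the critical size $O(\rho)$: there is no leftover smallness. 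Shrinking $\ve_0$ does not help either, because the commutator has no $\ve$-factor to exploit. The paper's fix is the logarithmic cut-off $\eta_\nu=\chi(\log\sigma_\nu/\log\delta)$ (the $\eta_{\ad}$ in \eqref{e201.5.4.20} is similarly logarithmic), for which $|\nabla\eta_\nu|\lesssim\rho/|\log\delta|$ and $|\nabla^2\eta_\nu|\lesssim\rho^2/|\log\delta|$, so the commutator norm is $O(1/|\log\delta|)$ and can be driven below $1/(10(k+1))$ by taking $\delta$ small \emph{first}; only afterwards does one shrink $\ve_0$ to control the separate $O(\ve_0)$ error coming from $\Delta_\zeta - \Delta_{g_\nu}$. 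Your proposal conflates these two sources of error and attributes both to $\ve_0$, which is exactly where the quantitative argument breaks down. Without the logarithmic cut-off (or some substitute, such as a dyadic decomposition) the patched parametrix cannot be shown to have small remainder, and the Neumann series does not converge.

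A smaller point: your injectivity argument says a kernel element is ``constant on each connected fibre,'' then excluded by the weight. On the non-compact fibres $\SEN_k$ the conclusion of the integration-by-parts is that a decaying harmonic function is identically zero, not merely constant; the statement should be phrased that way.
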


\begin{proof}
  This follows by patching inverses on the $X_\nu$ to an
  `adiabatic' inverse on $V$.  For this we first need to localize functions
  on $\cW$ near the different boundary hypersurfaces.

  To simplify notation,  having fixed $\alpha,\beta$ and $m$, write
  \bee\label{e5.28.3.20}
\cR = \cR_{\alpha,\beta,m},\;\; \cD = \cD_{\alpha,\beta,m+2}
  \eee
and write $\cR(\cW)$, $\cR(X_\nu)$ etc.\ to distinguish between function
spaces on $\cW$ and on $X_\nu$ (cf.\
\eqref{e11.3.4.20}--\eqref{e12.3.4.20} below).

As before, let $\chi(t)$ be a standard cut-off function, $0\leq
\chi(t) \leq 1$, $\chi(t)=1$ for $t\leq \frac12$ and vanishing for $t\geq
1$.  Let $\delta >0$ be small.  Let $\chi_\nu =
\chi(\sigma_\nu/\delta)$ and let $\chi_{\ad} = 1 - \sum_\nu
\chi_\nu$.   If $f\in C^\infty(\cW)$ then
$\chi_\nu f$ is smooth and supported in $U_\nu$ and $\chi_{\ad} f$ is
smooth and supported in $V$.

Now we identify $U_\nu$ with a subset of the product $X_\nu \times
[0,\ve_0)$ by the map
$w \mapsto (\kappa_\nu(w) ,\pi(w))$, where $\kappa_\nu$ was introduced
in \eqref{notn1.22.3.20}.   Under this identification, a set of the
form $\sigma_\nu < a$ maps to the 
subset
\bee\label{e112.5.4.20}
\{(x'_\nu,\theta_\nu,\ve) : |x'_\nu| < a\ve^{-1}\} \subset X_\nu\times[0,\ve_0),
\eee
and in particular $\chi_\nu f$, when transferred to the product, will
be compactly supported in each slice $X_\nu \times \{\ve\}$ for
$\ve>0$ (though these compact sets grow as $\ve\to0$).

From Theorem~\ref{t11.26.3.20}, we have an inverse $G_\nu :
\cR(X_\nu) \to \cD(X_\nu)$ of the Laplacian $\Delta_{g_\nu}$.  Using
the identification of $U_\nu$ with the product \eqref{e112.5.4.20}, now
  define a lift $\bG_\nu$ of $G_\nu$ to act on functions on $\cW$ by the formula:
  \bee
  \bG_\nu(f) = \eta_\nu G_\nu \chi_\nu f,
  \eee
  where\footnote{We have here suppressed explicit mention of the
    map identifying $U_\nu$ with the product}
  \bee\label{31.17.11.20}
  \eta_\nu = \chi(\log \sigma_\nu/\log \delta).
  \eee
 Then $\eta_\nu$ goes from $1$ to $0$
as $\sigma_\nu$ goes from $\delta$ to $\sqrt{\delta}$ and in
particular $\eta_\nu$ is identically $1$ on $\supp(\chi_\nu)$, so
\bee\label{e21.28.3.20}
\eta_\nu\chi_\nu = \chi_\nu.
\eee
From the boundedness of $G_\nu: \cR(X_\nu)\to \cD(X_\nu)$, it follows that
\bee\label{e1a.28.3.20}
\bG_\nu: \cR(\cW) \to \cD(\cW)\mbox{ is bounded},
\eee
and that
\bee\label{e1.28.3.20}
\Delta_{\bg}\bG_\nu = \chi_\nu - e_{\nu}.
\eee
The key point is that we can choose $\delta$ so that 
\bee\label{e3.28.3.20}
  \mbox{ the operator norm of }e_\nu:\cR(\cW) \to \cR(\cW)\mbox{ is
    bounded by } \frac{1}{10(k+1)} + C_\nu(\delta)\ve_0.
  \eee
To prove this, let $f \in \cR(\cW)$ and observe that $\bG_\nu f$ is
supported in $U_\nu$, and in this set, $\Delta_{\zeta} =
\Delta_{\bg_{\nu}} + O(\ve_0)$, where $g_\nu$ is the original ALF hyperK\"ahler
metric on the hypersurface $X_\nu$.  Thus
\begin{eqnarray}
  \Delta_{\zeta}\bG_{\nu} &=& \Delta_{g_{\nu}}\bG_{\nu} + O(\ve_0)
                            \nonumber \\
                        &=& \Delta_{g_\nu}\eta_\nu G_\nu \chi_\nu + O(\ve_0)
                            \nonumber \\
                        &=& \eta_\nu \Delta_{g_\nu}G_\nu \chi_\nu 
                            +[\Delta_{g_\nu}, \eta_\nu]G_\nu\chi_\nu +
                            O(\ve_0)
                            \nonumber \\
                        &=& \chi_\nu 
                            +[\Delta_{g_\nu}, \eta_\nu]G_\nu\chi_\nu +
                            O(\ve_0) \label{e22.28.3.20}
\end{eqnarray}
using \eqref{e21.28.3.20} to obtain the first term in
\eqref{e22.28.3.20}.   The commutator is an operator in
$\Diff^1_{\phi}(\cW/I)$ of the form
\bee\label{e23.28.3.20}
[\Delta_{g_{\nu}},\eta_\nu] = c_0 + c_1\nabla,
\eee
where $c_0 = \Delta_{g_\nu}\eta_\nu$ and $c_1 = \nabla \eta_{\nu}$. 
Now
for any given smooth function $\beta$ with compact support in $U_\nu$, 
$\beta G_\nu\chi_\nu$ defines a bounded linear map $\cR\to \cD$.
Noting that functions in $\cD$ decay like $\rho^{\alpha}$ while
functions in $\cR$ decay at the faster rate $\rho^{\alpha+2}$, in
order that \eqref{e23.28.3.20} have small norm, we require that
the coefficient of $\nabla$ be bounded by $o(\delta)\rho$ and the
order-$0$ term be bounded by $o(\delta)\rho^2$.  This is where the
specific formula for $\eta$ comes in. Indeed, from \eqref{l1.3.4.20} we have
\bee\label{e24.28.3.20}
\rd\eta_\nu = \chi'\left(\frac{\log \sigma_\nu}{\log \delta}\right)\frac{\rd
  \sigma_\nu}{\sigma_\nu\log \delta}
\eee
and since the norm of $\sigma_\nu^{-1}\rd \sigma_\nu$ is $O(\rho)$, this
term is bounded by a multiple of $\rho/|\log\delta|$.  Similarly
$|\nabla^2\eta| = O(\rho^2/|\log\delta|)$.    Thus the operator norm
of $[\Delta_{g_\nu},\eta_\nu]G_\nu$ is controlled by $1/|\log
\delta|$, and by taking $\delta$ sufficiently small, we can make the
operator norm of this term 
$<\frac{1}{10(k+1)}$.  Then
\eqref{e3.28.3.20} is obtained.

We now need to complete the definition of $\bG$ by finding an
approximate inverse localized near $V$.   For this, set
\bee\label{e201.5.4.20}
\eta_{\ad}(x) = 1 - \sum_{\nu} \chi\left(\frac{\log
    2\sigma_{\nu}}{2\log\delta}\right)
\eee
so that $\eta_{\ad}$ is identically $1$ on the support of $\chi_{\ad}$
and goes from $1$ to $0$ as any of the $\sigma_\nu$ goes from
$\delta/2$ to $\delta^2/2$.   We shall construct an operator
$\bG_{\ad}$ as a sum $\sum\eta_{\ad} G_n \chi_{\ad}$ where $G_n$ acts
on the $n$-th Fourier coefficient of $\chi_{\ad}f \in \cR(\cW)$.  Our
operator will have properties analogous to those of $\bG_{\nu}$,
\bee
\bG_{\ad}: \cR(\cW) \to \cD(\cW)\mbox{ is bounded and }
\Delta_{\bg}\bG_{\ad} = \chi_{\ad} - e_{\ad}
\eee
and we can choose $\delta$ so that 
\bee\label{e8.28.3.20}
  \mbox{ the operator norm of }e_{\ad}:\cR(\cW) \to \cR(\cW)\mbox{ is
    bounded by }\frac{1}{10} + C_{\ad}(\delta)\ve_0.
  \eee
For the construction of $\bG_{\ad}$ we 
are localized to $V$, 
we have the $S^1$-action and  for $f\in \cR(\cW)$ we may
split $\chi_{\ad}f$ into its Fourier modes.  For the zero Fourier mode,
define $u_0 \in \cD(\cW)$ by the formula
\bee\label{e1.5.4.20}
u_0 = \ve^{-2}\eta_{\ad}G_0(\chi_{\ad}f_0),
\eee
where $G_0$ is the Green's operator of the euclidean Laplacian
$\RR^3$.  It is not hard to check that this is bounded between the
given spaces, and
\bee\label{e2.5.4.20}
(\ve^2\Delta_0)u_0 = [\Delta_0,\eta_{\ad}] G_0 \chi_{\ad}f_0
+ \chi_{\ad}f_0.
\eee
As discussed above, the operator norm of the first term can be made as
small as we please by choosing $\delta$ sufficiently small: the
derivatives of $\eta_{\ad}$ give factors of $1/|\log\delta|$ in the
coefficients of the commutator. 

On the $n$-th Fourier mode we invert the model operator
\bee\label{e3.5.4.20}
\ve^{2}\Delta_0 + n^2
\eee
using the explicit Green's operator
\bee\label{e4.5.4.20}
G_n(x-x') = \frac{e^{-|n||x-x'|/\ve}}{4\pi|x-x'|}
  \eee
  on $\RR^3$.    Then the formula
  \bee\label{e5.5.4.20}
  \wh{u}_n(x) = \eta_{\ad}(x)\int G_n(x-x')\chi_{\ad}(x')\wh{f}_n(x')\,\rd
  x',\; n\neq 0
  \eee
gives the $n$-th Fourier coefficient of a function $u$ in
$\cD(\cW)$ if $\chi_{\ad}\wh{f}_n$ is the $n$-th Fourier
coefficient of $\chi_{\ad}f$, with  $f \in (\rho\sigma_I)^\beta H^m_{\bo} \subset
\cR_{\alpha,\beta,m}(\cW)$.

Combining the definitions \eqref{e1.5.4.20} and \eqref{e5.5.4.20}, we obtain an operator $\bG_{\ad}$, which is
a bounded linear map from $\cR(\cW) \to
\cD(\cW)$.

Now, in $V$, $\Delta_{\zeta}$ differs from $\Delta_{\ad}$ by an
operator $\rho A$  where  $A\in \Diff^{2}_{\phi}(\cW/I)$,
\bee\label{e6.5.4.20}
\Delta_{\zeta} = \Delta_{\ad} + \rho A\mbox{ in }V.
\eee
Then
\bee\label{e7.5.4.20}
\Delta_{\zeta}\bG_{\ad} = \chi_{\ad} +O\left(\frac{1}{|\log\delta|}\right) + \rho A\bG_{\ad}.
\eee
Choose $\delta$ so small that the operator norm of the second term on
the RHS is less than $\frac{1}{10}$.   With $\delta$ fixed in this
way, the support of $A\bG_{\ad}$ is bounded away from the $X_\nu$ and
so $\rho$ can be bounded here by $C_{\ad}(\delta)\ve_0$.

Choosing $\delta$ to satisfy \eqref{e3.28.3.20} and \eqref{e8.28.3.20},
and defining
  \bee\label{e11.5.4.20}
  \bG = \sum_{\nu} \bG_{\nu} + \bG_{\ad},
  \eee
we have a bounded
operator $\cR(\cW)\to \cD(\cW)$ with the property
\bee\label{e12.5.4.20}
\Delta_{\zeta}\bG = 1 - e_\zeta
\eee
where the operator norm of $e_\zeta$ has the form $\frac{1}{5} + C\ve_0$.
Thus, picking $\ve_0$ sufficiently small, $1-e_\zeta$ is invertible and
$\bG(1-e_\zeta)^{-1}$ is the required inverse.
\end{proof}

\begin{rmk}
The glued inverse operator $\bG$ appears to depend upon $m$, in that
in general if $m$ is increased, we shall need to take $\delta$
smaller.  However, the argument shows that if
\bee
f \in \bigcap_{m,n\geq 0} \ve^n\cR_{\alpha,\beta,m}
\eee
then the solution $u$ of $\Delta_\zeta u =f$ given by the Theorem will
lie in the intersection
\bee
u \in \bigcap_{m,n\geq 0} \ve^n\cR_{\alpha,\beta,m+2}.
\eee
\end{rmk}

\begin{thm}
  Let $\bze$ be as in Theorem~\ref{formal_thm}. Then there exists
  $\ve_0>0$ and 
  \bee
  \ba \in \ve^{\infty}\sigma_I^2\Omega^1_{\phi,\ei}(\cW)\otimes \RR^3
  \eee
  such that $\bze +\rd \ba$ is a hyperK\"ahler triple on $\cW$.
\end{thm}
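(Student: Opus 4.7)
The plan is to apply a contraction mapping argument to the elliptic reformulation of the hyperK\"ahler equation from \S\ref{triple_sec}, with the linearization inverted by Theorem~\ref{t1.1.4.20}. By Theorem~\ref{formal_thm}, the obstruction $e:=Q(\bze) \in \dot{C}^\infty(\cW;S^2_0\RR^3\otimes\lambda)$, so $e$ is $O(\ve^N(\rho\sigma_I)^N)$ for every $N$ and in particular lies in $\ve^N \cR_{\alpha,\beta,m}(\cW)$ for every $N$ and $m$.

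First I would recast $Q(\bze+\rd\ba)=0$ in the elliptic form of Theorem~\ref{perturbative}: seek $\ba$ with $\cF_\zeta(\ba) = D_\zeta\ba + e + \widehat{r}(\rd\ba) = 0$, where $D_\zeta=\rd^*+\rd_+$ is the Dirac operator of the metric $\bg_\zeta$ determined by $\bze$ and $\widehat{r}$ is quadratic in $\rd\ba$. Following \eqref{e12.2.4.20}--\eqref{e6.26.3.20} I would parametrize $\ba = D^*_\zeta\bu$ for $\bu\in(\Omega^0\oplus\Omega^2_+)\otimes\RR^3$, reducing the problem to
\[
D_\zeta D^*_\zeta\bu = -e - \widehat{r}(\rd D^*_\zeta\bu).
\]
After trivializing $\Lambda^2_+(\bg_\zeta)$ by the essentially invariant components of $\bze$, the Weitzenb\"ock formula identifies $D_\zeta D^*_\zeta$ with twelve copies of the scalar Laplacian $\Delta_\zeta$ modulo zero-order curvature terms; since $\bg_\zeta$ is hyperK\"ahler to all orders, those terms lie in $\dot{C}^\infty(\cW)$ and can be absorbed into a modified inhomogeneity $\widetilde{e}$ and nonlinearity $\widetilde{r}$.

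Theorem~\ref{t1.1.4.20} then supplies a bounded inverse $\bG:\cR_{\alpha,\beta,m}\to\cD_{\alpha,\beta,m+2}$ for $\alpha\in(0,1)$, $\beta>\alpha+2$, and any $m$, provided $\ve_0$ is small. I would solve the fixed-point equation $\bu = \bG\bigl(-\widetilde{e}-\widetilde{r}(\rd D^*_\zeta\bu)\bigr)$ by contraction on a small ball in $\cD_{\alpha,\beta,m+2}(\cW)$, choosing $m$ large enough to give Sobolev embedding into $C^0$. The two ingredients needed are $\|\widetilde{e}\|_{\cR}=O(\ve^\infty)$, which is immediate from $\widetilde{e}\in\dot{C}^\infty(\cW)$, and a weighted Moser-type product estimate of the form $\|\widetilde{r}(\rd D^*\bu_1)-\widetilde{r}(\rd D^*\bu_2)\|_{\cR}\leq C(\|\bu_1\|_{\cD}+\|\bu_2\|_{\cD})\|\bu_1-\bu_2\|_{\cD}$; together these force the contraction property. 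For the regularity claim I would bootstrap using the remark after Theorem~\ref{t1.1.4.20}: since $\widetilde{e}\in\bigcap_{m,n}\ve^n\cR_{\alpha,\beta,m}$, each iterate, and hence the fixed point, lies in $\bigcap_{m,n}\ve^n\cD_{\alpha,\beta,m+2}$, so that $\ba=D^*_\zeta\bu\in\ve^\infty\sigma_I^2\,\Omega^1_{\phi,\ei}(\cW)\otimes\RR^3$, the essential invariance coming from the fact that the zero-Fourier-mode part of $\cD_{\alpha,\beta,m+2}$ is exactly $S^1$-invariant while the higher modes decay faster than any power of $\rho\sigma_I$.

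The hard part will be the weighted product estimate controlling $\widetilde{r}(\rd D^*\bu)$ in $\cR_{\alpha,\beta,m}$: elements of $\cD_{\alpha,\beta,m+2}$ have a slow-decaying zero-Fourier component (weight $(\rho\sigma_I)^\alpha$) and a fast-decaying remainder (weight $(\rho\sigma_I)^\beta$), whereas $\cR_{\alpha,\beta,m}$ requires the matching weights $\alpha+2$ and $\beta$. A naive pairing of two zero modes loses two orders of decay ($2\alpha$ versus the required $\alpha+2$ since $\alpha<1$), so one must exploit the extra smallness $\bu=O(\ve^\infty)$ together with the identity $\rho\sigma_0\cdots\sigma_k = \ve$ from Notation~\ref{n_bdf} to recover them. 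A secondary point is to verify that $\bze+\rd\ba$ remains a symplectic triple, so that $\cF_\zeta(\ba)=0$ actually forces $Q(\bze+\rd\ba)=0$; since $\ba$ is $O(\ve^\infty)$ this holds automatically for $\ve_0$ small.
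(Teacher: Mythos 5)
Your proposal follows essentially the same route as the paper: parametrize $\ba = D^*\bu$ via \eqref{e12.2.4.20}, write a fixed-point equation for $\bu$ (the paper works with $\phi=\Delta_\zeta u$ but that is a relabelling), apply the contraction mapping theorem using Theorem~\ref{t1.1.4.20} with $m>5/2$, and bootstrap.  The curvature-term observation is correct and can be absorbed as you say.  Two points, however, deserve comment.

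First, the ``hard part'' you identify is a red herring.  The argument of $\widehat{r}$ is not $\bu\in\cD_{\alpha,\beta,m+2}$ but $\rd D^*\bu\in\cR_{\alpha,\beta,m}$, whose zero-Fourier mode already carries weight $(\rho\sigma_I)^{\alpha+2}$, not $(\rho\sigma_I)^\alpha$.  Pairing two such zero modes gives weight $2\alpha+4>\alpha+2$, the cross term is non-invariant and has weight $(\alpha+2)+\beta>\beta$, and the pairing of two non-invariant pieces gives $2\beta>\alpha+2$ (since $\beta>\alpha+2>2$) even after Fourier modes can recombine to the zero mode.  So the paper's claim that $u\mapsto u\otimes u$ is bounded $\cR\to\cR$ goes through with no loss of weight and without invoking the additional $\ve^\infty$ smallness.

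Second, and more seriously, your regularity argument has a real gap at $I_\infty$.  Bootstrapping in $m$ only produces $\bo$-regularity of all orders with the \emph{fixed} weight $(\rho\sigma_I)^\alpha$, $\alpha\in(0,1)$, on the zero-mode component near $I_\infty$ (note also that $\ve=\rho\sigma_0\cdots\sigma_k$ controls the decay at $X_{\ad}$ and the $X_\nu$, but \emph{not} at $I_\infty$, so $\ve^\infty$ buys nothing there).  Being in $\bigcap_{m}\cD_{\alpha,\beta,m+2}$ therefore gives only conormal regularity with weight $\alpha<1$ at $I_\infty$, which is not the smoothness and $\sigma_I^2$-decay asserted in the statement.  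The paper closes this gap by an explicit geometric argument: it shows that $\bg_\zeta$ agrees, modulo $O((\ve\sigma_I)^\infty)$, with a Gibbons--Hawking metric near $I_\infty$ (using that $\bze$ differs from $\bom_{\ad}$ by essentially basic forms, so the $x_j$ are approximate moment maps), and then invokes the multipole-expansion smoothness of Theorem~\ref{t11.28.3.20} for the strongly ALF Laplacian.  You should either reproduce that argument or give an alternative; the abstract bootstrap alone is not enough.  Also, because $\ve_0$ in Theorem~\ref{t1.1.4.20} shrinks as $m$ grows, one should justify that the fixed points for different $m$ agree on their common domain (the paper does this by uniqueness), rather than asserting that each iterate lies in the intersection over all $m$ of spaces defined on shrinking $\ve$-intervals.
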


\begin{proof}
We obtain a finite-regularity solution by the implicit function
theorem, and then iterate to obtain smoothness.  Seek $\ba = D^*G\phi$,
where $\bG_{\zeta}$ is the inverse of $\Delta_{\zeta}$ from
Theorem~\ref{t1.1.4.20}. 

By \eqref{e6.26.3.20} we require
$\phi$ to solve the nonlinear equation
\bee\label{e7.26.3.20}
\phi = -e - \wh{r}(\rd D^*G\phi),
\eee
where $e = Q(\bze)$ and $\wh{r}$ is quadratic.

We find a solution $\phi \in
\cR_{\alpha,\beta,m}$.   Since $\rd D^*$ maps $\cD_{\alpha,\beta,m+2}$
into $\cR_{\alpha,\beta,m}$, we need to know that $u \mapsto u\otimes
u$ is bounded from $\cR$ to $\cR$. If we choose $m > 5/2$ (remember
that $\cW$ is $5$-dimensional) then since the weights force decay, this
this is indeed satisfied.  Because $Q(\bze)$ is smooth and rapidly
decreasing in $\ve$ and $\sigma_I$, by taking $\ve_0$ small,
$Q(\bze)$ can be arranged 
to have very small norm in any fixed $\cR_{\alpha,\beta,m}(\cW)$.
Because $\wh{r}$ is quadratic, $\phi \mapsto - e -\wh{r}(\rd
D^*G\phi)$ is a contraction for $\ve_0$ small enough, giving 
a solution to \eqref{e7.26.3.20} for any given $m$.

For the regularity statement,  note first that the solution will be smooth in
any bounded open subset of the interior of $\cW$ by elliptic regularity,
because $Q(\bze)$ is itself smooth. To see boundary regularity
consider first the boundary components $X_\nu$ and $X_{\ad}$, staying
away from spatial infinity $I_\infty$. The solution for a given $m$ is
defined in a subset $\ve< \ve_0$, and by construction this solution
has $\bo$-regularity of order $m$ at $\pi^{-1}(0)$. The solution is
also $O(\ve^n)$ for every $n$, because this is true of $Q(\bze)$.   If
we pass to a larger value $m_1>m$, then we obtain a different solution
$u_1$ defined in $\ve < \ve_1$, where in general, $\ve_1 <
\ve_0$. However, the solution is unique, so $u_0|\{\ve< \ve_1\} =
u_1$, and it follows that $u_0$ also has $\bo$-regularity of order
$m_1$. Hence in any neighbourhood $O$ of any boundary point of
$\pi^{-1}(0)$, $u \in \ve^{\infty}H^\infty_{\bo}(O)$ and so is smooth
and vanishes to all orders at the boundary of $O$.

For the regularity at $I_\infty$, we need to take a closer look at the
asymptotic form of $\bg_{\zeta}$.  We claim that mod
$O((\ve\sigma_I)^\infty)$, $\bg_\zeta$ is given by the
Gibbons--Hawking Ansatz near $I_\infty$.

Recall that a hyperK\"ahler metric in $4$ dimensions can be expressed
using the Gibbons--Hawking Ansatz if it admits an isometric
triholomorphic $S^1$-action. The
  euclidean coordinates $x_j$ emerge as the three components of the
  hyperK\"ahler moment map for this action, and the Gibbons--Hawking
  form of the metric follows by using these coordinates. 

  If we write $\bze = \bze_0 + \bze_1$ and correspondingly $\bg_\zeta
  = \bg_0 + \bg_1$ where $\bze_0$ and $\bg_0$ are exactly
  $S^1$-invariant, then the error terms $\bze_1$ and $\bg_1$ will be
  $O((\ve\sigma_I)^\infty)$.    Now $\bze$ is a modification of
  $\bom_{\ad}$ by essentially basic forms, which means that
  \bee
  \iota_{\p_\theta}(\bze-\bom_{\ad}),\;\;
    \iota_{\p_\theta}(\bze_0-\bom_{\ad}) \mbox{ are }
    O((\ve\sigma_I)^\infty) \mbox{ near }I_\infty.
    \eee
Thus the $x_j$ are approximate moment maps for $\bg_0$ and following
through the Gibbons--Hawking Ansatz we obtain a harmonic function
$\bh$, say, defined near $I_\infty$, such that
\bee
\bg_0 = \bh\frac{|\rd x|^2}{\ve^2} + \bh^{-1}\alpha_\zeta^2.
\eee
The smoothness of decaying solutions of the Laplace equation now
follows as it did for $X_\nu$ in Theorem~\ref{t11.28.3.20}.
\end{proof}

\section*{Acknowledgements}
Both authors  thank Lorenzo  Foscolo and  Sergey Cherkis for numerous
discussions and helpful comments. Singer is grateful to Richard
Melrose for  inspiring discussions over many years. In particular we
explored gluing theorems for K\"ahler metrics of constant scalar
curvature from the point of view of the present paper in work which
unfortunately remains unpublished.  He also acknowledges useful
discussions with Pierre Albin, Joel Fine, Jesse Gell-Redman, Peter
Hintz,  Rafe Mazzeo and Andr\'as Vasy.  This material is based upon work supported by the National Science Foundation under Grant No. 1440140, while the second author was in residence at the Mathematical Sciences Research Institute in Berkeley, California, during the Fall Semester of 2019.

\appendix

\section{The manifold  $\CP_1\times \CP_1$  and  dual ellipses }
\label{cpapp}
\subsection{Homogeneous coordinates and projection operators}
The non-compact manifolds obtained from $\CP_1 \times \CP_1$ by removing the diagonal and anti-diagonal  $\CP_1$   can be interpreted in terms of oriented ellipses in two  dual ways. The goal of this appendix is to derive this picture, which was used in \S\ref{AHreview} to explain the geometry and mutual relationships of the  spaces $\AHd,\AH,\AHu$ and $\AHr$.

We begin by fixing  notation for  the vector space $\CC^2$ and its projective space $\CP_1$. We write  
\bee
\label{wzfirst}
 z= \begin{pmatrix} z_1 \\ z_2\end{pmatrix},  \quad 
w=  \begin{pmatrix} w_1 \\ w_2\end{pmatrix}, 
\eee
for   elements $z,w \in \CC^2$ which also serve as homogeneous coordinates for $\CP_1$. We will require both the anti-symmetric bilinear form 
\bee
z \wedge w = z_1w_2-w_1z_2 
\eee
and the  sesquilinear form $\langle \cdot ,\cdot \rangle$ on $\CC^2$ , linear in the second argument, given by
\bee
\langle w, z\rangle = \bar w_1 z_1 + \bar w_2 z_2. 
\eee
Defining
\bee
w^\perp= \begin{pmatrix}- \bar w_2 \\   \phantom{-}\bar w_1 \end{pmatrix} ,
\eee
we  note that $
z \wedge w^\perp = \langle w, z \rangle$.

We are interested in the   non-compact manifolds obtained from $\CP_1 \times \CP_1$ by removing the diagonal or anti-diagonal, i.e.,
\begin{align}
\CP_1 \times \CP_1\setminus \CPD & =\{(z,w) \in \CC^2\times \CC^2|z\wedge w \neq 0\}/\sim \; ,\nonumber \\
\CP_1 \times \CP_1\setminus \CPAD & =\{(z,w) \in \CC^2\times \CC^2|z\wedge w^\perp\neq 0\}/\sim \;,\end{align}
where $\sim$  is division by the scaling  action of  $\CC^*\times \CC^* $  on  $(z,w)$. 
A convenient description of these quotient spaces  is in terms of the projection operators
\bee
P(z,w)  =  \frac{1}{w\wedge z}  z\langle \bar w^\perp,   \cdot\rangle ,  \qquad
Q(z,w)=\frac{1}{ \langle w,  z\rangle }  z \langle w, \cdot \rangle, 
\eee
naturally representing points in, respectively, $\CP_1 \times \CP_1\setminus \CPD$  and $\CP_1 \times \CP_1\setminus \CPAD$.  
Clearly  $P^2=P$ and $Q^2=Q$. With 
\bee
Q^\dagger (z,w)
= \frac{1}{ \langle z,  w\rangle }  w \langle z, \cdot \rangle, 
\eee
 we also note the  identities
\bee
\label{projid}
PQ =Q, \qquad 
PQ^\dagger =0,
\eee
and 
\bee
\label{projidual}
QP =P, \qquad 
QP^\dagger =0. 
\eee
If we now define  traceless  $2\times 2$ matrices $M$ and $N$  via
\bee
P= \frac 12 (\text{id} + M ), \qquad Q =\frac 12 (\text{id} +N),
\eee
then 
\bee
\label{XYsquare}
M^2=N^2=\text{id},
\eee
as well as 
$Q^\dagger  =  \frac 12 (\text{id} + N^\dagger )$. The identities  \eqref{projid} are equivalent to 
\bee
\label{keyresults}
MN=\text{id} +N-M, \qquad M N^\dagger +  N^\dagger + M + \text{id}=0.
\eee
 Before we leave the discussion of the projectors $P$ and $Q$, we note that 
 \bee 
 \label{PQherm}
P ^\dagger (z,w) = P(z,w)   \Leftrightarrow w^\perp=z, \qquad  Q^\dagger (z,w) = Q (z,w)   \Leftrightarrow w=z,
\eee
 so that $P$ is Hermitian precisely on the anti-diagonal inside $\CP_1 \times \CP_1\setminus \CPD$ and $Q$ is Hermitian precisely on the diagonal inside $\CP_1 \times \CP_1\setminus \CPAD$.

\subsection{Ellipses in euclidean space}
To obtain the description of $\CP_1 \times \CP_1\setminus \CPD$ and $\CP_1 \times \CP_1\setminus \CPAD$ in \S\ref{AHreview}, we   use the  Pauli matrices 
\bee
\sigma_1= \begin{pmatrix} 0 & 1 \\ 1 & 0\end{pmatrix}, \quad \sigma_2= \begin{pmatrix} 0 & -i \\ i  & \phantom{-} 0\end{pmatrix}, \quad \sigma_3= \begin{pmatrix} 1  & \phantom{-} 0 \\ 0 & -1\end{pmatrix},
\eee
 to expand   
\bee
M= X_1\sigma_1 + X_2\sigma_2 + X_3\sigma_3, \qquad N= Y_1\sigma_1 + Y_2\sigma_2 + Y_3\sigma_3,
\eee
and  then assemble the Cartesian components  into vectors  $X= (X_1,X_2,X_3)^t, Y=(Y_1,Y_2,Y_3)^t $ in $ \CC^3$,
with  real and imaginary parts
\bee
X= \tx +i \xi, \quad Y=y+i \eta.
\eee
Then the constraint \eqref{XYsquare} implies
\bee
\label{xyconstraints}
|\tx|^2= |\xi|^2 +1,\quad \tx\cdot \xi =0  \qquad |y|^2= |\eta|^2 +1, \quad y\cdot \eta=0.
\eee

We thus arrive at  pairs of  vectors $(\tx, \xi)$ and $(y,\eta)$ satisfying the constraints  \eqref{xyconstraints} as natural coordinates on, respectively $\CP_1\times \CP_1\setminus \CPD$ and  $\CP_1\times \CP_1\setminus \CPAD$.  They make explicit the isomorphisms 
\bee
\CP_1\times \CP_1\setminus \CPD \simeq T^*S^2,\qquad \CP_1\times \CP_1\setminus \CPAD \simeq TS^2, 
\eee
 with  $m =\tx/|\tx|, n=y/|y|$    taking values on the  round sphere in  euclidean space, and $\xi$ and $\eta$ being co-tangent and tangent vectors at $m$ and $n$. It follows from \eqref{PQherm}  that 
$X=m$   on the anti-diagonal $\CPAD$ inside $\CP_1\times \CP_1\setminus \CPD$, so that $m$ is a natural coordinate there. Similarly, $Y=n$   on the diagonal $\CPD$ inside $\CP_1\times \CP_1\setminus \CPAD$, so that $n$ is a natural coordinate there. This picture is  consistent with the self-intersection numbers of the  zero-section of $TS^2$ and the  diagonal 
 inside $\CP_1\times \CP_1$  both being $+2$, and the self-intersection numbers of  the  zero-section of $T^*S^2$ and the   anti-diagonal   inside $\CP_1\times \CP_1$ both  being $-2$.
 
The  coordinates $(\tx,\xi)$  and $(y,\eta) $  naturally parametrise oriented ellipses  up to scale in euclidean space,  which we call the $X$-  and $Y$-ellipse. 
In this interpretation,   $\tx$ and $\xi$ are major and minor axes of the $X$-ellipse,   
 while  $y$ and $\eta$  are the   major and minor axes of the $Y$-ellipse. When $\xi=0$ 
 the $X$-ellipse degenerates into a line along $\tx$ and when $\eta=0$, the $Y$-ellipse degenerates into a line along $y$. The special case of the ellipse becoming a circle is only obtained in the limit of $|\xi| \rightarrow \infty$ for the $X$-ellipse and $|\eta| \rightarrow \infty $ for the $Y$-ellipse.

 We can now  state and prove the main result of this appendix. 
  \begin{lem}
   The $X$- and $Y$-ellipses   are dual to each other in the sense that 
   \bee
 \label{axisdeff}
 \tx = \frac{y\times  \eta}{|\eta|^2}, \quad \xi =   - \frac{ \eta}{ |\eta|^2},
 \eee 
where $|\eta|\neq0$. 
 This map is an involution of  $\CP_1\times \CP_1 \setminus (\CPAD\cup \CPD)$ , where we also have 
   \bee
 \label{axisdef}
 y = \frac{\tx\times  \xi}{|\xi|^2}, \quad \eta =   - \frac{ \xi}{ |\xi|^2}.
 \eee 
  In particular, the degeneration of the $X$-ellipse into a line corresponds to the degeneration of the $Y$-ellipse into a  circle at right angles to that line,   and conversely. 
 \end{lem}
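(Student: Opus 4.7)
My plan is to extract vector identities from the two projection relations $PQ=Q$ and $PQ^{\dagger}=0$ of \eqref{projid}, which \eqref{keyresults} translates to the traceless $2\times 2$ identities $MN=\mathrm{id}+N-M$ and $MN^\dagger+N^\dagger+M+\mathrm{id}=0$. Applying the Pauli identity $(\vec A\cdot\vec\sigma)(\vec B\cdot\vec\sigma)=(A\cdot B)\mathrm{id}+i(A\times B)\cdot\vec\sigma$ to $MN$ and to $MN^\dagger$ (noting that $N^\dagger=\bar Y\cdot\vec\sigma$ since the Pauli matrices are Hermitian), and comparing scalar and $\vec\sigma$-coefficients on the two sides of each identity, yields the four complex equations
\begin{align*}
X\cdot Y &= 1, & iX\times Y &= Y-X, \\
X\cdot \bar Y &= -1, & iX\times \bar Y &= -(X+\bar Y).
\end{align*}

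The next step is to separate into real and imaginary parts. With $X=\tx+i\xi$ and $Y=y+i\eta$, adding and subtracting the two scalar equations produces the bi-orthogonality relations
\[
\tx\cdot y=0,\qquad \tx\cdot\eta=0,\qquad \xi\cdot y=0,\qquad \xi\cdot\eta=-1.
\]
Under the standing assumption $\eta\neq 0$, \eqref{xyconstraints} ensures that $y,\eta,y\times\eta$ are mutually orthogonal and nonzero, hence form a basis of $\RR^3$. Combining the bi-orthogonality with $\tx\cdot\xi=0$ forces $\xi=-\eta/|\eta|^2$ and pins $\tx$ to a scalar multiple $d\,(y\times\eta)$. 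To fix $d$ I would use the imaginary part of $iX\times Y=Y-X$, namely $\tx\times y-\xi\times\eta=\eta-\xi$: the determined $\xi$ gives $\xi\times\eta=0$ and $\eta-\xi=(|\eta|^2+1)|\eta|^{-2}\eta$, whereas $\tx=d(y\times\eta)$ produces $\tx\times y=d|y|^2\eta=d(|\eta|^2+1)\eta$ via $|y|^2=1+|\eta|^2$, which forces $d=1/|\eta|^2$. This establishes \eqref{axisdeff}.

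The involution property and the dual formulas \eqref{axisdef} then follow by symmetry: the system of four complex equations above is manifestly invariant under $(X,Y)\leftrightarrow(Y,X)$, so the same argument applied with the roles exchanged produces \eqref{axisdef}. Equivalently, one may check directly that substituting $\xi=-\eta/|\eta|^2$ and $\tx=(y\times\eta)/|\eta|^2$ into the right-hand sides of \eqref{axisdef} and simplifying via $(y\times\eta)\times\eta=-|\eta|^2 y$ recovers $(y,\eta)$ exactly. The degeneration statements are then immediate from the formulas themselves: $|\eta|\to 0$ drives $|\xi|\to\infty$ while $\tx$ stays bounded along a fixed line, so the $X$-ellipse becomes an infinitely large circle at right angles to the line along which the $Y$-ellipse degenerates, and conversely.

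I expect the genuine obstacle to be noticing that \emph{both} projection identities in \eqref{projid} are needed. Using only $PQ=Q$, one recovers only the first row of the system, and the solution set becomes $X=Y+V$ with $V$ a free complex multiple of a generator of the one-dimensional $(+i)$-eigenspace of the cross-product operator $Z\mapsto Z\times Y$ on $Y^\perp\subset\CC^3$; the normalization $X\cdot X=1$ turns out to be automatic for any such $V$, so the parameter is genuinely undetermined. It is the second identity $PQ^\dagger=0$, i.e.\ the equations $X\cdot\bar Y=-1$ and $iX\times\bar Y=-(X+\bar Y)$, that removes this one-parameter ambiguity and singles out the explicit formulas \eqref{axisdeff}.
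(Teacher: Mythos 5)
Your proof is correct and reaches the same conclusion, but it takes a genuinely different route. The paper combines the two identities in \eqref{keyresults} into the single relation $M(N-N^\dagger)=2\,\mathrm{id}+N+N^\dagger$, observes that $N-N^\dagger=2i\,\eta\cdot\sigma$ is invertible when $|\eta|\neq0$ with $(N-N^\dagger)^{-1}=-(N-N^\dagger)/(4|\eta|^2)$, and then reads off $M$ from a short Pauli-algebra computation using $NN^\dagger-N^\dagger N=4\,(y\times\eta)\cdot\sigma$. You instead expand each of the two relations separately with the Pauli identity into a system of four complex vector equations, split them into real and imaginary parts to obtain the bi-orthogonality relations $\tx\cdot y=\tx\cdot\eta=\xi\cdot y=0$, $\xi\cdot\eta=-1$, and then solve in the basis $\{y,\eta,y\times\eta\}$, using the imaginary part of $iX\times Y=Y-X$ to determine the remaining scalar. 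The paper's route is shorter and purely matrix-algebraic; yours is longer but more explicit about which constraint fixes which degree of freedom, and your closing remark correctly identifies that $PQ=Q$ alone leaves a one-complex-parameter ambiguity removed by $PQ^\dagger=0$ (the paper's choice to form $M(N-N^\dagger)$ packages both identities, so it uses the same input, just silently). Your symmetry argument for \eqref{axisdef} is valid — the four-equation system is indeed stable under $(X,Y)\leftrightarrow(Y,X)$ together with complex conjugation of the last two equations — and the direct verification via $(y\times\eta)\times\eta=-|\eta|^2y$ also checks out.

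One small factual slip in the degeneration discussion: as $|\eta|\to0$ you assert that $|\xi|\to\infty$ while ``$\tx$ stays bounded,'' but in fact $|\tx|^2=1+|\xi|^2$ also diverges, consistent with $|\tx|=|y|/|\eta|$. What remains bounded is the eccentricity, $|\xi|/|\tx|\to1$, which is exactly why the $X$-ellipse tends to a circle of infinite radius (in the plane orthogonal to $y$). Your stated conclusion is correct, but the intermediate justification should be amended.
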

{\em  Proof}: 
We deduce from \eqref{keyresults}  that  the Hermitian matrices $X$ and $Y$ characterising the $X$- and $Y$-ellipses satisfy
\bee
\label{first}
 M(N -  N^\dagger)= 2\, \text{id} + N + N^\dagger.
\eee
Writing $\sigma$ for the vector with cartesian component $\sigma_1,\sigma_2, \sigma_3$, we 
have 
\bee
  N-N^\dagger =2i \eta\cdot \sigma, \quad (N- N^\dagger)^2 = -4|\eta|^2
 \eee
 showing that $N-N^\dagger$ is invertible when $|\eta|\neq 0$, with  inverse
 \bee
 ( N-N^\dagger)^{-1} =- \frac{N-N^\dagger}{4|\eta|^2}. 
 \eee
 Multiplying  \eqref{first} from the right by this inverse, and using 
and 
\bee
 NN^\dagger -  N^\dagger N   = 4  y\times\eta \cdot \sigma
\eee
we deduce
\begin{align}
M& =-\frac{(N+ N^\dagger  +2)(N-  N^\dagger)}{4\eta^2} \nonumber \\
 &= \frac{NN^\dagger   -  N^\dagger   N -2 (N-N^\dagger)}  {4|\eta|^2} \nonumber \\
 &=\frac{y\times\eta \cdot \sigma - i \eta\cdot \sigma}{|\eta|^2}, 
\end{align}
 which is  the claimed relation \eqref{axisdef}. The proof of  \eqref{axisdeff}  is elementary, but also follows from the dual relation of projectors \eqref{projidual}. One checks that the degeneration of the $X$-ellipse into lines along $\tx$ as $|\xi|\rightarrow 0$  makes the $Y$-ellipse degenerate into a circle  of infinite radius in the plane orthogonal to $\tx$. Conversely,  in the limit $|\eta|\rightarrow 0$ the $Y$-ellipses degenerate  into lines along $y $ while the  $X$-ellipses become  circles of infinite radius in the orthogonal plane.
 \hfill $\Box$ 

\subsection{Symmetries}
In  \S\ref{AHreview}  of the main text  we also make use of discrete symmetries of $\CP_1 \times \CP_1\setminus \CPAD$. The factor switching map
\bee
s:  (z,w) \mapsto (w,z)
\eee
 induces the map $Q\mapsto Q^\dagger$ at the level of projectors and hence 
 \bee
 s: TS^2 \rightarrow TS^2, \quad Y\mapsto  \bar Y.
 \eee
 When $|\eta|\neq 0$, it also  induces the map $X\mapsto -X$.
 
 The factor switching map composed with the antipodal map on both factors 
 \bee
 r:   (z,w) \mapsto (w^\perp,z^\perp)
 \eee
 induces the map $Q\mapsto  \text{id}- Q$ at the level of projectors and hence 
  \bee
 r: TS^2 \rightarrow TS^2, \quad Y\mapsto  -  Y.
 \eee
 When $|\eta|\neq 0$, it also  induces the map $X\mapsto \bar X$.
 The maps $s$ and $r$ commute, and generate the Vierergruppe. The product $a=rs=sr$ is the antipodal map on both factors, so 
\bee
a: (z,w) \mapsto (z^\perp,w^\perp). 
\eee
 It maps 
  \bee
 a: TS^2 \rightarrow TS^2, \quad Y\mapsto  - \bar  Y.
 \eee
 When $|\eta|\neq 0$, it also  induces the map $X\mapsto - \bar X$.

 The $SU(2)$ action on the homogeneous coordinates induces  the adjoint  $SO(3)$ action  on both $X$ and $Y$, so a rotation 
 \bee
 X \mapsto G X, \quad  Y\mapsto GY, 
 \eee
 of the complex vectors $X,Y \in \CC^3$ by $G\in SO(3)$.  This action commutes with the action of the Vierergruppe given above.

To make contact with the discussion in the main text we also require a  lift of the Vierergruppe to the  subgroup $\mathcal{D}_2$  of $SU(2)$.  This can be achieve by noting that the traceless complex  matrix $M$ can be expressed  in terms of the magnitudes of $\tx$ and $\xi$  as 
\bee
M =g(|\tx|\sigma_3 + i|\xi| \sigma_1) g^\dagger, \qquad g \in SU(2)
\eee
and that, for given $M$,  this fixes $g$  up to sign  when $\xi \neq 0$. Then one checks that, with the matrices $R_\ell=\exp(-i\frac{\pi}{\ell} \sigma_3)$ and $S=-i\sigma_2$ defined in \eqref{e1.30.9.19},  the right-multiplication by $S$,  
\bee
g\mapsto gS,
\eee
induces the map $s: (\tx,\xi)\mapsto (-\tx,-\xi)$ given in \eqref{sra}, and the right-multiplication by $R_\ell$, 
\bee
g\mapsto gR_\ell, 
\eee
induces the map $(\tx,\xi)\mapsto (\tx,R_m(2\pi/\ell )\xi)$, where we  used the notation defined after \eqref{Uact}. 
In particular, the right-multiplication by $R_2$ induces  the map $r$ given in  \eqref{e1.30.9.19}.  Identifying $(g,|\xi|)$  for $\xi\neq 0 $ with $g( |\xi|,0)^t \in \CC^2$, this is the  lift of the Vierergruppe to the binary dihedral group  $\mathcal{D}_2$ acting on $ \CC^2$ which is   used in the main text.

\section{Manifolds with corners}

\subsection{Definitions}

In this paper, we have used manifolds with corners (MWCs) systematically to
resolve singularities (for example the indeterminacy in the adiabatic
Gibbons--Hawking family $g_{\ve}$) and to obtain a smooth family of
$D_k$ ALF gravitational instantons on the Sen space through the
introduction of the space $\cW$.  We gather here
the most important definitions of the theory, the aim being to make
the rest of the paper more self-contained, rather than to give a
systematic development. For more details, the reader is referred to
\cite{daomwc} or the short summary in \cite{CCN}.  Another good
introduction is contained in \cite{gpaction}.

We give start with an extrinsic definition of MWC, referring to the
above references for the intrinsic approach. 

Let $M$ be a real manifold of dimension $n$. A subset $X\subset M$ is an
$n$-dimensional manifold with corners (MWC) if $X$ is a finite non-empty
intersection of `half-spaces' $H_j = \{\rho_j \geq 0\}$, where the
$\rho_j \in C^\infty(M)$ and $\rd \rho_j \neq 0$ on the zero-set of
$\rho_j$.  (Here $j$ lies in some finite index set $J$.)  This condition guarantees that $Z_j = \{\rho_j=0\}$ is a
smooth embedded submanifold of $M$ of codimension $1$.

We assume that the interior $X^{\circ}$ of $X$ (in $M$) is non-empty,
so that $X^{\circ}$ is an $n$-manifold.  We assume also that there is
no redundancy in the set $\{H_j\}$, so that the intersection of any
proper subset of the $H_j$ is strictly larger than $X$.
In particular $Y_j = X\cap Z_j$ is non-empty, and more importantly its
interior in $Z_j$ is a manifold of dimension of $n-1$.  It is
customary to suppose that the $Y_j$ are {\em connected}. (This can be
achieved by renumbering, and possibly shrinking the ambient manifold $M$.)
The $Y_j$ are called the {\em  boundary hypersurfaces} of $X$ and
$\rho_j$ is the {\em boundary defining function (bdf)} of $Y_j$.

The final technical point is that all non-empty intersections of the
boundary hypersurfaces should be cut out transversally 
by
the $\rho_j$---we do not want any pair of boundary hypersurfaces to
meet tangentially, for example.   Thus we insist that 
if
\begin{equation}
  \rho_j(p) = 0\mbox{ for }j = 1,\ldots,k
\end{equation}
then
\begin{equation}
  \rd \rho_1(p)\wedge \cdots \rd \rho_k(p) \neq 0.
\end{equation}
(It is to be understood that this holds for all subsets of $J$.)  It
follows in particular that at most $n$ boundary hypersurfaces can meet
in $X$.

\begin{ex}
If we have a finite collection of generically chosen half-spaces in
$\RR^n$, then their intersection (if non-empty) will be a manifold
with corners.  One may think of a general manifold with corners as a
`curvilinear version' of this, though of course there is no reason for
a general MWC to be homeomorphic to a ball.
\end{ex}

\begin{ex}
A closed (solid) octahedron in $\RR^3$ is {\em not} an example of
a MWC because four faces come together at each vertex, which is not
allowed in a MWC of dimension $3$.
\end{ex}

In this paper, all our MWCs have corners only up to codimension $2$:
in other words, there are non-empty intersections of certain pairs of
boundary hypersurfaces, but any intersection of three boundary
hypersurfaces is empty.   We now explain what is meant by {\em adapted
  coordinates} in this setting.

\begin{ex}\label{ex1.30.3.20}
  First of all, suppose that $p$ lies
on some boundary hypersurface $Y$ but is not in any intersection
$Y\cap Y'$ of boundary hypersurfaces.  Then adapted coordinates in a
neighbourhood $\Omega$ of $p$ in $X$ are $(\rho,y_1,\ldots,y_{n-1})$,
where the $y_j$ are local coordinates on $Y\cap\Omega$ centred at $p$:
thus $p$ is identified with the origin of this coordinate system.
\end{ex}
\begin{ex}\label{ex2.30.3.20}
Similarly, if $p\in Y\cap Y'$ and the bdfs of $Y$ and $Y'$ are
respectively $\rho$ and $\sigma$, adapted coordinates in a
neighbourhood $\Omega$ of $p$ in $X$ are
$(\rho,\sigma,y_1,\ldots,y_{n-2})$, where now the $y_j$ are local
coordinates on $Y\cap Y'\cap \Omega$ (which is an ordinary ($n-2$)-manifold
because there are no corners of codimension $3$ or more), centred at
$p$. Again, $p$ is identified with the origin of this coordinate system.
\end{ex}

\subsection{The $\bo$-tangent bundle}

The references mentioned above develop a suitable category of MWCs and
smooth maps.  In this development, the so-called $\bo$-tangent bundle
of MWCs is the `correct' replacement for the tangent bundle in
ordinary differential analysis. 

Let $X$ be a compact MWC, of dimension $n$.   The set of all smooth
vector fields which are tangent to all boundary faces of $X$ is
denoted $\cV_{\bo}(X)$.   There is a smooth vector bundle the
$\bo$-tangent bundle $T_{\bo}X$ with the property that
$C^\infty(X,T_{\bo}X) = \cV_{\bo}(X)$, where on the LHS we have {\em
  unrestricted} smooth sections over $X$.

Let us give a local description of $T_{\bo}X$ in the case of the two
examples above.

\begin{ex}\label{e3.30.3.20}  With the notation of
  Example~\ref{ex1.30.3.20}, a local basis for $T_{\bo}\Omega$ is given
  by the vector fields
\begin{equation}\label{e11.30.3.20}
  \rho\frac{\p}{\p \rho},
  \frac{\p}{\p y_1},\ldots,   \frac{\p}{\p y_{n-1}}
\end{equation}
and $\cV_{\bo}(\Omega)$ is the space of all linear combinations of
these vector fields with  coefficients in $C^\infty(\Omega)$. 
\end{ex}

\begin{ex}\label{e4.30.3.20}  With the notation of
  Example~\ref{ex2.30.3.20}, a local basis for $T_{\bo}\Omega$ is given
  by the vector fields
\begin{equation}\label{e12.30.3.20}
  \rho\frac{\p}{\p \rho},   \sigma\frac{\p}{\p \sigma},
  \frac{\p}{\p y_1},\ldots,   \frac{\p}{\p y_{n-2}}
\end{equation}
and $\cV_{\bo}(\Omega)$ is the space of all linear combinations of
these vector fields with  coefficients in $C^\infty(\Omega)$. 
\end{ex}

If we change adapted local coordinates in either of these examples, we
get new local bases \eqref{e11.30.3.20} or \eqref{e12.30.3.20} and it
is easy to see that these are related by transition functions in
$C^\infty(\Omega)$. This is one way to verify the existence of the
bundle $T_{\bo}X$.  More abstractly, one may invoke the Serre--Swan theorem.

For every point $p$ of $X$ there is an `evaluation map'
$T_{\bo,p}X \to T_p X$, but this is not an isomorphism if $p\in \p
X$. On the other hand the restriction of $T_{\bo}X$ to the interior
$X^{\circ}$ of $X$ is canonically isomorphic to $TX^{\circ}$. However,
if $v\in \cV_{\bo}(X)$, the smoothness of the coefficients up to and
including the boundary of $X$ means that $v|X^{\circ}$ will have some
controlled vanishing near each of the boundary hypersurfaces.

It is convenient to introduce the following notation.
\begin{notn}  Let $X$ be a MWC as above. 
Let $\Omega$ be an open set of $X$. Then $C^\infty(\Omega)$ is the
space of smooth functions on $\Omega$ (up to an including the
boundary of $X$, if $\Omega \cap \p X\neq\emptyset$).  The space
$C^\infty_0(\Omega)$ is the subspace of functions with compact support
in $\Omega$.  The support of such a function meets $\p X$
in a compact subset of $\p X \cap \Omega$ but need not be empty.   By
contrast, the subspace $\dot{C}^\infty(\Omega)$ consists of those
functions which vanish to all orders at the boundary hypersurfaces
with non-empty intersection with $\Omega$. If $f\in
\dot{C}^\infty(\Omega)$ we also say that $f$ is {\em rapidly decreasing} at
$\p \Omega$ and this has to be understood in the precise sense of the
previous sentence.  In the setting of Example~\ref{ex2.30.3.20}, we say
that $f\in C^\infty(\Omega)$ is rapidly decreasing at $Y$ if $f$
vanishes to all orders in the bdf $\rho$ of $Y$.
\end{notn}

\section{Analysis of the Laplacian of a strongly ALF space}

The essential analytical input we need for the proof of the main
theorem is a good understanding of the Poisson equation
\bee\label{e31.30.3.20}
\Delta_g u = f
\end{equation} 
on a strongly ALF manifold $(X,g)$ in the sense of Definition~\ref{strongALF}.
The geometric microlocal
approach to the analysis of elliptic operators in this setting was
first undertaken in \cite{MM_FB} and was further developed for the
purposes of Hodge theory in \cite{Vaillant} and \cite{HHM}.  The
strong ALF property of $g$ leads to stronger results than those in the
literature, so we describe our results here and explain how they are
obtained.

\begin{dfn}
Let $\rd \mu_{\bo}$ be any smooth $\bo$-density on $X$,
  and let $L^2_{\bo}(X)$ be the resulting space of $L^2$ functions. 
  For positive integers  $H^{n,m}_{\phi,\bo}(X)$ is the Sobolev space
  of functions on $X$ with $m$ $\bo$-derivatives and $n$
  $\phi$-derivatives in $L^2_{\bo}(X)$:
  $$
  \Diff^n_{\bo}(X)\Diff^m_{\phi}(X)u \subset L^2_{\bo}(X).
  $$
As previously, the subscript $\ei$ will be used to denote functions
which are essentially invariant near $\p X$.  Write $H_{\bo}^s(X)$ for
$H^{s,0}_{\bo,\phi}(X)$. 
\end{dfn}

\begin{rmk} Because of the algebraic properties of the $\bo$- and
  $\phi$- vector fields, $H^{n,m}_{\phi,\bo}(X)$ could also have been
  defined as the set of $u$ for which
$$
v_1\ldots v_{n+m} u \in L^2_{\bo}(X)
$$
for any collection of vector fields of which $n$ are in $\cV_{\bo}$
and $m$ are in $\cV_{\phi}$.
\end{rmk}

We now define a family of domains and ranges for $\Delta_g$ so that
\bee
\Delta_g : \cD_{\alpha,\beta,m+2}(X) \longrightarrow
\cR_{\alpha,\beta,m}(X)
\eee
is a bounded invertible linear mapping.  There is some flexibility in
the definition, and we choose to make the {\em range}
space as close as possible to a $\bo$-Sobolev space, albeit one in
which the invariant and non-invariant components with respect to the
$S^1$-action are weighted differently. 
For $m$ a positive integer and any numbers $\alpha>0$, $\alpha \not\in\ZZ$ and
$\beta> \alpha+2$, define
\begin{equation}\label{e11.3.4.20}
  \cR_{\alpha,\beta,m}(X) = \{\chi f_0 + f_1\}\mbox{ where }
f_0 \in \rho^{\alpha+2}H^m_{\bo}(V)\mbox{ and }\frac{\p 
    f_0}{\p\theta} =0,\; f_1 \in \rho^\beta H^m_{\bo}(X). 
\end{equation}
In this definition, $V$ is a collar neighbourhood of $\p X$, $\rho$ is
a bdf identically equal to $1/|x|$ in $V$ and $\chi$
is cut-off function with compact support in $V$ and identically $1$ in
a neighbourhood of $\p X$. We always take $\beta > \alpha+2$, so that the
invariant component decays more slowly than the non-invariant
component.

For the domain, define
\bee\label{e12.3.4.20}
\cD_{\alpha,\beta,m+2}(X) = \{\chi u_0 + u_1\}\mbox{ where }
u_0 \in \rho^{\alpha}H^{m+2}_{\bo}(V) + \cH_\alpha,\;
 \frac{\p u_0}{\p\theta} = 0,\; u_1 \in 
 \rho^\beta H^{2,m}_{\phi,\bo}(X)
\eee
where everything is already defined apart from $\cH_{\alpha}$. This is a finite-dimensional space of finite-order harmonic multipole 
expansions on $\RR^3$,
\begin{equation}
  \cH_{\alpha} = \left\{\sum_{j=1}^{[\alpha]} H_j(x)\right\}
\end{equation}
where $h_j(x)$ is homogeneous of degree $-j$ on $\RR^3\setminus
\{0\}$. Here if $\alpha \in (0,1)$, $\cH_\alpha=\{0\}$ by definition.

Both the domain and range are independent of the choice of cut-off
$\chi$. 
\begin{thm}
  Let the definitions be as above with $\alpha >0$,
  $\alpha\not\in\ZZ$, and
  $\beta > \alpha +2$.    Then there is a 
  bounded inverse of $\Delta_g$,
  \bee
  G:\cR_{\alpha,\beta,m} \to
  \cD_{\alpha,\beta,m+2},\;\; \Delta_g G = G\Delta_g = 1.
  \eee
  \label{t11.26.3.20}
  \end{thm}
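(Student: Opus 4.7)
The plan is to reduce the problem to the corresponding Poisson equation on the model Gibbons--Hawking end, invert there by a Fourier decomposition along the circle-fibre, and then upgrade this to a true inverse on $X$ by standard interior parametrix constructions plus a Neumann-series perturbation. By Definition~\ref{strongALF}, there is a collar $V$ of $\p X$ on which $g$ agrees with a Gibbons--Hawking metric $g_0$ modulo terms that are $O(\rho^\infty)$; hence $\Delta_g = \Delta_{g_0} + R$ in $V$ where $R$ has coefficients in $\dot{C}^\infty(V)$ and is a $\phi$-differential operator of order two. Throughout $V$ the isometric circle action lets us split any function into its $S^1$-Fourier modes, and, because $R$ is rapidly decaying, differences between $\Delta_g$ and $\Delta_{g_0}$ on each mode can be absorbed into the error term of a Neumann iteration at the end.

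For the nonzero Fourier modes, the model operator restricted to the $n$-th mode is $n^2 h^{-1} + h^{-1}\wt\Delta_0$, a strictly positive operator with mass $\geq n^2$ on functions of $x\in \RR^3$. Solve by the Yukawa-type Green's function $G_n(x-x') = (4\pi|x-x'|)^{-1} e^{-|n||x-x'|}$, exactly as in Theorem~\ref{poisson_2} but at fixed $\ve=1$; the exponential decay in $|x-x'|/|n|$ yields bounded inverses $\rho^\beta H^m_{\bo} \to \rho^\beta H^{2,m}_{\phi,\bo}$ with norm $O(n^{-2})$, summable over $n\neq 0$ to an inverse on the non-invariant part of $\cR_{\alpha,\beta,m}$. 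For the zero Fourier mode the equation reduces to the scalar Poisson equation $\Delta_0 u_0 = h f_0$ on $\RR^3$; here I invoke the classical weighted theory for the Euclidean Laplacian on the radial compactification $\ol{\RR^3}$ (equivalently, scattering or $b$-calculus applied to $\Delta_0$). The indicial roots of $\Delta_0$ at $\p\ol{\RR^3}$ are the integers $\{-j, j+1 : j\in\ZZ_{\geq0}\}$, so $\Delta_0 : \rho^\alpha H^{m+2}_{\bo} \to \rho^{\alpha+2} H^m_{\bo}$ is Fredholm whenever $\alpha\notin\ZZ$. For $\alpha \in (0,1)$ it is actually invertible (the Newton potential with weight $\rho^\alpha$); for $\alpha>1$ non-integer the cokernel is dual to a finite-dimensional space of harmonic multipoles, and adjoining the space $\cH_\alpha$ to the domain precisely kills this obstruction. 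This furnishes the invariant-part inverse on $V$.

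Combining the zero and nonzero modes produces a bounded right inverse $G_\infty : \cR_{\alpha,\beta,m}(V) \to \cD_{\alpha,\beta,m+2}(V)$ for the model Laplacian $\Delta_{g_0}$ in $V$, with $\Delta_{g_0}G_\infty = \chi_V$ modulo a smoothing, rapidly decaying remainder. Patch this with an interior parametrix on $X\setminus V$ using cut-off functions $\chi, 1-\chi$ chosen as in the proof of Theorem~\ref{t1.1.4.20}; the commutator $[\Delta_g,\chi]$ is compactly supported in the interior and hence compact on the weighted spaces. The remaining error $1-\Delta_g \bG$ is the sum of a compact interior term and the asymptotic term $RG_\infty + O(\rho^\infty)$, which is bounded in operator norm by $C\ve_0$ in a suitably rescaled sense; more pertinently, it is compact. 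Hence $\Delta_g : \cD_{\alpha,\beta,m+2} \to \cR_{\alpha,\beta,m}$ is Fredholm.

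For invertibility, injectivity follows from a standard Liouville-type argument: any $u\in \cD_{\alpha,\beta,m+2}$ with $\Delta_g u=0$ is smooth, $L^2$-orthogonal to itself after integration by parts (the boundary terms vanish because $\alpha>0$ and $\beta>\alpha+2$ force $u$ and $\nabla u$ to decay sufficiently for the divergence theorem on an exhaustion of $X$), and the harmonic multipole component in $\cH_\alpha$ is killed by the non-constant Fourier-mode decomposition together with the decay of the ordinary part. Surjectivity follows from Fredholm plus an index calculation, or more directly by noting that $\Delta_g$ is formally self-adjoint with respect to the natural $L^2$ pairing between $\cR_{\alpha,\beta,m}$ and a dual space whose invariant part has weight $\rho^{-\alpha}$; injectivity of the adjoint acting on this dual space follows by the same Liouville argument applied with dual weights (again using $\alpha\notin\ZZ$ and $\beta>\alpha+2$). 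The hardest step technically is the bookkeeping for the zero Fourier mode: matching the exact domain $\rho^\alpha H^{m+2}_{\bo}(V) + \cH_\alpha$ to the range $\rho^{\alpha+2} H^m_{\bo}(V)$ is precisely the scattering/$b$-calculus statement for $\Delta_0$ on $\RR^3$, and verifying it cleanly in the present framework — rather than quoting it from \cite{MM_FB,scat} — would constitute the bulk of the work.
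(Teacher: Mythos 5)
Your proposal takes a genuinely different technical route from the paper, although the underlying philosophy—split into Fourier modes along the fibre circle, invert the zero mode by the Newtonian potential, invert the non-zero modes by a Yukawa-type kernel, and then patch with an interior parametrix—is closely aligned with how the paper proves Theorem~\ref{t1.1.4.20} (the adiabatic version on $\cW$). The paper's own proof of Theorem~\ref{t11.26.3.20} instead works systematically on the stretched product $X^2_\phi$ using the $\phi$-pseudodifferential calculus of \cite{MM_FB}: first a symbolic parametrix $P\in\Psi^{-2}_{\phi,\ei}(X)$ with residual error $R\in\Psi^{-\infty}_{\phi,\ei}(X)$ (Step~1), then a correction $Q=Q_0+Q_1$ solving $\Delta_h Q\equiv R$ near the front face $\ff_\phi$—$Q_1$ via Fourier transform on the normal fibre $\RR^3\times S^1$ for the non-zero modes, $Q_0$ via an explicit Newtonian potential with cut-offs for the zero mode—yielding an error $R'$ that vanishes rapidly at \emph{all} boundary hypersurfaces of $X^2_\phi$ (Step~2), and finally a Fredholm argument (Step~3). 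Your patching scheme is more hands-on and avoids introducing the stretched product, which is a reasonable trade-off.

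The place where your proposal is genuinely short of a proof is the compactness of the remaining error. You assert that $RG_\infty$ together with the interior commutator term is ``compact on the weighted spaces'' but offer no argument, and this is precisely the technical crux. The Laplacian of a $\phi$-metric is \emph{not} fully elliptic in the $\phi$-calculus (its normal operator $\Delta_{\RR^3\times S^1}$ is not invertible on the zero Fourier mode), so a residual $\phi$-operator is in general \emph{not} compact on $\phi$-Sobolev spaces. The entire point of the paper's Step~2 is to remove the obstruction at $\ff_\phi$ and produce an error that decays at all faces of $X^2_\phi$, including the $\bo$-front face and the old corners; only after this extra work is compactness available (and even then the argument uses $\alpha\in(0,1)$ away from the indicial set of the Euclidean Laplacian, as in \cite{HHM,GreenBook}). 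In your patching scheme the analogous claim would be that the model inverse $G_\infty$, truncated by cut-offs, yields a remainder whose Schwartz kernel is rapidly decreasing at every boundary face of $X^2_\phi$; this is true but needs to be established, and tracking the corner asymptotics of your explicit Green's functions is exactly the bookkeeping the paper carries out in the construction of $Q_0$ via \eqref{e1.21.11.20}. Two smaller points: the norm estimate ``$O(n^{-2})$, summable over $n$'' is not the right mechanism—one uses Parseval, not absolute summability of mode operators—and the reference to $\ve_0$ is out of place here, since Theorem~\ref{t11.26.3.20} concerns a single ALF manifold $X$ with no adiabatic parameter. Your injectivity argument by integration by parts is fine and matches the paper; your surjectivity sketch via formal self-adjointness is roughly what the paper does, but the paper handles the case $\alpha>1$ by bootstrapping from $\alpha'\in(0,1)$ and extracting the multipole part, rather than by duality with a growing-weight space, which sidesteps the subtlety that the kernel of the adjoint on $\rho^{-\alpha}$-spaces is not obviously trivial.
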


If $f\in \dot{C}^\infty(X)$, then it lies in the intersection over all
$(\alpha,\beta,m)$ of $\cR_{\alpha,\beta,m}$, the solution $u=Gf$ lies
in all the $\cD_{\alpha,\beta,m}$, and so:

\begin{thm}\label{t11.28.3.20}
Let $f \in \dot{C}^\infty(X)$.  Then there exists unique $u$
\begin{equation}\label{e30.25.2.20}
 u \in  \rho C^\infty_{\ei}(X)
\end{equation}
solving $\Delta_g u = f$.
\end{thm}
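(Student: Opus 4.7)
The plan is to invoke Theorem~\ref{t11.26.3.20} for every admissible triple $(\alpha,\beta,m)$ and then synthesise the results. Because $f \in \dot{C}^\infty(X)$ vanishes to infinite order at $\p X$, it lies in $\bigcap_{\alpha,\beta,m} \cR_{\alpha,\beta,m}(X)$. The inverses furnished by Theorem~\ref{t11.26.3.20} give, a priori, different candidate solutions $G_{\alpha,\beta,m} f$, but all of these must agree, by uniqueness of the decaying solution: any harmonic function on a complete ALF hyperK\"ahler manifold that decays at infinity vanishes identically, via a standard energy argument against a suitably chosen cut-off. So there is a single $u$ lying in $\bigcap_{\alpha,\beta,m}\cD_{\alpha,\beta,m+2}(X)$, and this $u$ is my candidate solution.

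Next I would unpack what membership in that intersection gives. Writing $u = \chi u_0 + u_1$ as in \eqref{e12.3.4.20}, the non-invariant piece $u_1$ lies in $\rho^\beta H^{2,m}_{\phi,\bo}(X)$ for every $m$ and every large $\beta$; by Sobolev embedding and the structure of the $\phi$-vector fields near $\p X$, this forces $u_1$ to be smooth on $X$, essentially invariant, and vanishing to all orders at $\p X$. The invariant zero mode $u_0$ lies in $\rho^{\alpha} H^{m+2}_{\bo}(V) + \cH_{\alpha}$ for each non-integer $\alpha > 0$, and as $\alpha$ crosses each positive integer the finite-dimensional space $\cH_\alpha$ acquires a new homogeneous harmonic multipole $H_j$ of degree $-j$ on $\RR^3 \setminus \{0\}$. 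Letting $\alpha$ and $m$ increase indefinitely therefore produces a genuine polyhomogeneous expansion
\begin{equation*}
u_0 \sim \sum_{j\geq 1} H_j(x), \qquad H_j \text{ harmonic and homogeneous of degree } -j,
\end{equation*}
with a smooth remainder vanishing to all orders at $\p X$, so $u_0 = c\rho + O(\rho^2)$ for some constant $c$, smooth up to the boundary. Combining these shows $u \in \rho C^\infty_{\ei}(X)$.

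The main obstacle will be verifying rigorously that the finite-dimensional contribution from $\cH_\alpha$ at each step consists of the expected harmonic multipoles and, more importantly, that the $u_0$'s extracted for different values of $\alpha$ are compatible, so that Borel-summing the multipole series yields the actual smooth $u_0$ rather than just something with the prescribed asymptotic expansion. This compatibility is ultimately a consequence of the uniqueness of $u$ established in the first paragraph: any two Borel sums constructed this way differ by a function decaying faster than every $\rho^\alpha$, hence rapidly decreasing, and this residue can be absorbed into $u_1$. Finally, uniqueness of $u$ within $\rho C^\infty_{\ei}(X)$ follows from the same energy argument applied to the difference of any two such solutions, since that difference is harmonic and $O(\rho)$ at infinity.
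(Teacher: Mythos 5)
Your proposal is correct and takes essentially the same route as the paper: the paper's proof likewise observes that $f\in\dot{C}^\infty(X)$ lies in $\bigcap_{\alpha,\beta,m}\cR_{\alpha,\beta,m}$ and lets $u=Gf$ range over all $\cD_{\alpha,\beta,m+2}$, which yields \eqref{e30.25.2.20} and the multipole expansion \eqref{e23.21.11.20}. Your ``compatibility'' worry is actually a non-issue, since the split $u=\chi u_0+u_1$ into zero- and non-zero-Fourier-modes near $\p X$ is canonical once $\chi$ is fixed, but you resolve it anyway via the uniqueness argument.
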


The condition $\alpha\not\in \ZZ$ is needed to avoid the indicial
roots of the three-dimensional Laplacian acting on the invariant part
$u_0$ \cite{GreenBook}.  These indicial roots are precisely the integers in this case,
corresponding to the homogeneous solutions $\Delta H =0$ on
$\RR^3\setminus \{0\}$.

The essentially invariant part of $u$ in \eqref{e30.25.2.20} is
not merely 
smooth, but has an asymptotic expansion in homogeneous harmonic
functions (multipole expansion).  That is, there is a sequence of functions $H_j$ on
$\RR^3\setminus \{0\}$ such that for any $N$,
\begin{equation} \label{e23.21.11.20}
u - \sum_{j=1}^N H_j \in \rho^{N+1} C^\infty(X)\mbox{ near }\p X
\end{equation}
where $H_j$ is homogeneous of degree $-j$. 
Furthermore this equation can be differentiated any number of times
and remains valid.

\begin{rmk}  For a general ALF manifold, one cannot expect a smooth
(up to the boundary)  solution $u$ of the Poisson equation, even for
$f \in \dot{C}^\infty(X)$.  The most that can be expected is that $u$
will have a polyhomogeneous conormal expansion, cf.\ \cite[Prop.\
17]{HHM}.
\end{rmk}

\begin{proof}[Proof of Theorem~\ref{t11.26.3.20}]
We use the geometric microlocal approach, and assume that the reader
has some familiarity with \cite{MM_FB}.  

The inverse operator $G$ is constructed in stages, using the class
$\Psi^*_\phi(X)$ of $\phi$-pseudodifferential operators on $X$. These
operators have kernels whose structure is clearest on the `stretched
product' $X^2_{\phi}$, which is a certain blow-up of the cartesian
square $X^2 = X\times X$.   

The boundary faces of $X^2_{\phi}$ are the two front faces, 
$\ff_{\phi}$ and $\ff_{\bo}$, as well as the old boundary
hypersurfaces, which are the 
lifts of $\p X\times X$ and $X \times \p X $.  Boundary defining
functions will be denoted by $\rho_{\phi}, \rho_{\bo}, \rho, \rho'$.

We give a somewhat rough, local, description of $X^2_{\phi}$ 
in terms of a local asymptotic
Gibbons--Hawking chart $(x,\theta)$ near $\p X$.  On the interior $X^\circ \times
X^\circ$  (near $\p X \times \p X$) we have coordinates $(x,\theta,x',\theta')$.  In
$X^2_{\phi}$, a neighbourhood of the corner $\ff_{\bo} \cap (X\times \p 
X)$ corresponds to the region $|x| \gg 1$, $|x'|/|x|$ bounded
(by some $\delta<1$) and we
may take $\rho_{\bo} =1/|x|$ and $\rho' =|x|/|x'|$ near this corner.
Similarly, near $\ff_{\bo}\cap (\p X \cap X)$ we may take $\rho_{\bo}
= 1/|x'|$ and $\rho = |x'|/|x|$.

The front face $\ff_{\phi}$ is the total space of a fibration over $\p
X$, with fibres $\ol{\RR^3}\times S^1$, and a neighbourhood of
$\ff_{\phi}$ in $X^2_{\phi}$ corresponds to extending this to a
neighbourhood $V$ of $\p X$ in $X$.  Working locally in the base we
identify this with the product
$$
U\times S^1 \times \ol{\RR^3}\times S^1
$$
with local coordinates $(x',\theta', w,\psi)$. (To have this
trivialization, $x'$ must lie in a sufficiently small neighbourhood of a point
in $\p\ol{\RR^3}$.)     In these coordinates, the two projections
$X^2_{\phi} \to X$ 
 are
given by
\bee\label{e5.19.1.20}
\pi_1:(x',\theta',w,\psi) \mapsto (x'+w, \theta' + \psi),\;\;
\pi_2:(x',\theta',w,\psi) \mapsto (x', \theta').
\eee
(We could equally well have given a description of $\ff_{\phi}$ as a
fibration over the boundary of the first factor of $X$, rather than
the second.)

The intersection $\ff_{\phi}\cap \ff_{\bo}$ is
non-empty and $1/|w|$ and $1/|x'|$ are local defining functions for
$\ff_{\bo}$ and $\ff_{\phi}$ respectively, near this corner.

The space $\Psi^m_{\phi}(X)$ of $\phi$-pseudodifferential operators of
order $m$ consists of those operators whose Schwarz kernels, lifted to
$X^2_{\phi}$, are smooth away from the (lift of the) diagonal, rapidly
decreasing at all boundary faces apart from $\ff_{\phi}$, and with a
standard pseudodifferential singularity along the lifted diagonal 
$\{w=0,\psi=0\}$.  If $m=-\infty$, we have kernels in
$C^\infty(X^2_{\phi})$ with the same rapid decrease at all boundary
faces apart from $\ff_{\phi}$. 

Because of the circle-action near the boundary, we can introduce the 
subclass $\Psi^{m}_{\phi,\ei}(X)$ of essentially invariant operators
defined by insisting that $R$ be essentially invariant
with respect to the diagonal circle action on
$X^2_{\phi}$. Explicitly, a kernel in $\Psi^{m}_{\phi,\ei}(X)$ has the
(local) form
\bee \label{e2.19.11.20}
R(x',w,\psi) \mod \rho_{\phi}^\infty \Psi^{m}_{\phi}(X).
\eee
One of the achievements of \cite{MM_FB} is the proof that
$\Psi^*_\phi(X)$ is closed under composition.  This, and the
surjectivity of the $\phi$-symbol map, means that many of
the standard arguments involving pseudodifferential operators on
manifolds without boundary can be adapted to the study of
$\phi$-differential operators on $X$.

This is used in the first step in constructing $G$:

{\noindent \bf Step 1}:  There exists
$P \in \Psi^{-2}_{\phi,\ei}(X)$ and $R \in
\Psi^{-\infty}_{\phi,\ei}(X)$, formally self-adjoint, such that
\begin{equation}\label{e15.17.2.20}
P\Delta_g = 1 - R,\;\; \Delta_g P= 1- R.
\end{equation}
This is an adaptation of the usual symbolic argument in the theory of elliptic
pseudodifferential operators, see \cite{MM_FB}.  Since the metric is
strongly ALF, the operator $\Delta_g$ is essentially invariant.  An
examination of the usual proof shows that
$P$ (and hence $R$) can be chosen to be essentially
invariant as claimed.

The error term $R$ in \eqref{e15.17.2.20} is not compact $L^2 \to
L^2$, so this result does not yield a Fredholm result for
$\Delta_g$. We need to improve the error term to one which vanishes
at all boundary hypersurfaces.  Let $Q = Q(x',\theta',w,\psi)$ be
smooth on $X^2_{\phi}$ with support near $\ff_{\phi}$.  The Laplacian
$\Delta_g$ differs from the Laplacian $\Delta_h$  of the Gibbons--Hawking
metric $g_h$ by rapidly decreasing terms, so we have
\bee\label{e16a.19.11.20}
\Delta_g Q = \Delta_h Q + O(\rho_\phi^\infty)Q
\eee
where
\bee\label{e16.19.11.20}
\Delta_h Q = 
-\frac{1}{h(x'+w)}\wt{\nabla}^2_w Q -h(x'+w)\p_\psi^2Q
\eee
with
$$
\wt{\nabla}_j = \frac{\p}{\p w_j} - a_j(x'+w)\frac{\p}{\p\psi}
$$
(cf.\ \eqref{e2.7.8.18}).

We shall explain how to solve $\Delta_h Q = R$ (mod rapidly decreasing
terms).  Then replacing $R$ by $R+Q$ will give our improved
parametrix, from which Theorems~\ref{t11.26.3.20} and
\ref{t11.28.3.20} can be proved directly.

To save on notation, replace $R$ by its invariant part, so $R =
R(x',w,\psi)$.  Since we are working modulo $\rho_\phi^\infty$ in any
case, this will not cause any harm.  Now decompose $R = R_0 + R_1$
where $R_0$ is the zero Fourier-mode of $R$ with respect to $\psi$ and 
$R_1$ is the sum of the non-zero Fourier-modes.

{\noindent \bf Step 2:}  There exist smooth functions $Q_0 \in
\rho_{\bo}\rho\rho' C^{\infty}(X^2_{\phi})$ and $Q_1 \in
\Psi^{-\infty}_{\phi,\ei}(X)$ such that
\bee\label{e21.21.11.20}
\Delta_h Q_0 - R_0 \in (\rho_{\phi}\rho_{\bo}\rho)^\infty \rho' C^\infty(X^2_{\phi})
\eee
and
\bee\label{e22.21.11.20}
\Delta_h Q_1 - R_1 \in \dot{C}^\infty(X^2_{\phi})
\eee
(rapid decrease at all boundary hypersurfaces).
Both of $Q_0$ and $Q_1$ are supported near $\ff_{\phi}\cup \ff_{\bo}$
and $Q_0$ is $S^1\times S^1$-invariant (independent of $\psi$ as well
as $\theta'$).

Let us start with $Q_1$.  It is obtained order by order in powers of
$\rho_{\phi}$ starting with the leading term.  Let $q_1 =
Q_1|\ff_{\phi}$ and $r_1 = R_1|\ff_{\phi}$.  Solving $\Delta Q_1 =
R_1$ to leading order at $\ff_{\phi}$ means setting $x' = \infty$ in
\eqref{e16.19.11.20}.  This yields the equation
\bee\label{e1.31.12.20}
\Delta_{\RR^3\times S^1} q_1 = r_1
\eee
which can easily be solved using the Fourier transform.  If we denote
by  $\wh{f}(\eta,n)$ the Fourier transform with
respect to $w$ and $\psi$ of a function on $\RR^3\times S^1$, then we
may solve our equation by setting
$$
\wh{q}_1(\eta,n) = \left\{\begin{array}{l}
                            \frac{\wh{r}_1(\eta,n)}{|\eta|^2 + n^2},\;
                            n\neq 0;\\
0,\; n=0.\end{array}\right.
$$
Because $\wh{r}_1(\eta,0)=0$ (the zero Fourier-mode has been removed),
the inverse Fourier transform $q_1$ of $\wh{q}_1$ solves
\eqref{e1.31.12.20}.  
Because $r_1$ is smooth and rapidly decreasing for $|w| \to \infty$,
its Fourier coefficients are also smooth in $\eta$ and rapidly
decreasing in $\eta$ and $n$.   The same is therefore true
of the Fourier coefficients $\wh{q}_1(\eta,n)$, so 
$q_1$ is smooth and rapidly decreasing (and its zero Fourier-mode is
equal to $0$). 

Extend $q_1$ smoothly to a neighbourhood of $\ff_{\phi}$, calling the
result $\wt{q}_1$.  Then $\Delta_h \wt{q_1} - R_1 = O(\rho_\phi)$
and the error is still rapidly decreasing at all other boundary
hypersurfaces.   We can now
iterate, solving next for the coefficient of $\rho_\phi$ in the RHS of this
equation and proceeding order by order.  Invoking Borel's Lemma, we
find a $Q_1$ satisfying \eqref{e22.21.11.20}.

Now turn to the $S^1\times S^1$-invariant part $R_0$. We may suppose that this is
smooth and supported near $\ff_{\phi}$. Taking $Q_0$ to be invariant,
the equation $\Delta_h Q_0 = R_0$ becomes
\bee
\Delta_{w} Q_0 = h(x'+w) R_0(x',w),
\eee
where $\Delta_{w}$ is the standard euclidean Laplacian in the $w$
variables. 
Let $\delta>0$ be small and let $\chi(t)$ be a smooth cut-off function
equal $1$ for $t<\delta/2$ and equal to $0$ for $t>\delta$.  Define
\begin{equation}\label{e3.18.2.20}
Q_0(w,x') = \chi(\rho)\chi(\rho')\frac{1}{4\pi}\int 
  \frac{1}{|w-w''|}h(x'+w'')R_0(w'',x')\,\rd w''.
\end{equation}
This is the standard formula for solving the Poisson equation
in $\RR^3$ apart from the cut-offs, and because $R_0$ is rapidly
decreasing for $|w| \to \infty$, $Q_0$ is smooth in $w$ and decays like
$1/|w|$, for $|w| \to \infty$.  In fact $Q_0$ has a multipole
expansion (cf.\ \eqref{e23.21.11.20}) for
large $|w|$ the coefficients of which are smooth in the parameter
$x'$.  Recalling that $\rho_{\bo} 
= 1/|w|$ near $\ff_{\phi}\cap \ff_{\bo}$, we see that $Q_0 \in
\rho_{\bo} C^\infty$, at least in a neighbourhood of $\ff_{\phi}$. 

By the change of variables $w= x-x'$ (cf.\
\eqref{e5.19.1.20}), however, we obtain 
\bee\label{e1.21.11.20}
Q_0(x-x',x') = \chi(\rho)\chi(\rho')\frac{1}{4\pi}\int 
  \frac{1}{|x-x'-w''|}h(x'+w'')R_0(w'',x')\,\rd w''
\eee
and so we have an extension of $Q_0$ to $X^2_{\phi}$,
supported near $\ff_{\phi}\cup \ff_{\bo}$. One checks that
${Q}_0 \in  \rho_{\bo}\rho\rho' C^\infty$ as claimed in the
statement of Step 2.  Moreover,
\bee
\Delta_h {Q}_0 = \chi(\rho)\chi(\rho')R_0 + E
\eee
where $E$ comes from the commutator $[\Delta_h,\chi(\rho)]$ and is
thus supported near $X\times \p X$ (away from the corner) and vanishes
to first order in $\rho'$.  This completes the proof of Step 2.

{\noindent \bf Step 3:}  Let $Q = Q_0 + Q_1$ and let $P' = P + Q$.
Then
\bee\label{e25.21.11.20}
\Delta_g P' = 1 - R'
\eee
where the error term $R'$ is as in \eqref{e22.21.11.20}.  One shows
(as in \cite{MM_FB} and \cite{HHM}) that the operator $R'$ is compact on
$\cR_{\alpha,\beta,m}$ for $\alpha \in (0,1)$.  (The fact that there
is no indicial root of the euclidean Laplacian in $(0,1)$ is used
here, cf.\ \cite{GreenBook}.)
Theorem~\ref{t11.26.3.20} follows this from for $\alpha \in  (0,1)$, 
using integration by parts to prove that $\Delta_g$ is injective and
the Fredholm alternative.   If
$\alpha >1$, $\Delta_g$ is still injective on $\cD_{\alpha,\beta,m}$,
and if $f \in \cR_{\alpha,\beta,m}$, the result for $\alpha' \in (0,1)$
gives a unique solution $u$ of the equation in $\cD_{\alpha',\beta,m}$
of $\Delta_g u = f$. Using the fact that the invariant part of $f$
decays at the faster rate $\rho^{\alpha}$, one may deduce that the
leading terms of $u$ are in $\cH_{\alpha}$.  This completes the proof
of Theorem~\ref{t11.26.3.20}. 

\end{proof}

\begin{rmk}
In \cite{MM_FB}, what we have called Step 2 is addressed using the
`normal operator' of $\Delta_g$.  This is the Laplacian
$\Delta_{\RR^3\times S^1}$, acting in the $(w,\psi)$ variables,
i.e. on the fibres of $\ff_{\phi}$.  This operator appeared in our
argument to solve away the component $R_1$ of $R$ near $\ff_{\phi}$.
The normal operator is used systematically
in \cite{MM_FB} for the Fredholm theory of 
{\em fully elliptic} $\phi$-operators.  However, the Laplacian of a
$\phi$-metric is not fully elliptic and this is why the additional
argument was needed to deal with the $S^1\times S^1$-invariant part
$R_0$ of the error.
\end{rmk}

\markboth{\refname}{\refname}
\bibliography{dnbib}
\bibliographystyle{plainurl}
\end{document}